\def\AC{\mathcal{A}}
\def\PC{\mathcal{P}}
\def\MC{\mathcal{M}}
\def\E{\mathbf{E}}
\def\N{\mathbf{N}}
\def\R{\mathbf{R}}
\def\Z{\mathbf{Z}}
\def\x{\mathbf{x}}
\def\1{\mathbf{1}}
\def\u{\mathbf{u}}
\def\al{\alpha}
\def\be{\beta}
\def\pa{\partial}
\def\ep{\epsilon}
\def\de{\delta}
\def\ga{\gamma}
\def\ka{\varkappa}
\newtheorem{theorem}{Theorem}[section]
\newtheorem{lemma}{Lemma}[section]
\newtheorem{remark}{Remark}
\newcommand{\la}{\lambda}
\newcommand{\om}{\omega}
\newcommand{\Ga}{\Gamma}
\newcommand{\Si}{\Sigma}
\newcommand{\Om}{\Omega}
\newcommand{\La}{\Lambda}
\begin{document}
\title{The evolutionary game of pressure (or interference), resistance and collaboration
\thanks{http://arxiv.org/abs/1412.1269}}
\author{
Vassili N. Kolokoltsov\thanks{Department of Statistics, University of Warwick,
 Coventry CV4 7AL UK,
  Email: v.kolokoltsov@warwick.ac.uk and associate member  of IPI RAN RF}}
\maketitle

\begin{abstract}
In this paper we extend the framework of evolutionary inspection game put forward recently by the author and coworkers
to a large class of conflict interactions dealing with the pressure executed by the major player
(or principal) on the large group of small players
that can resist this pressure or collaborate with the major player.
We prove rigorous results on the convergence of various Markov decision models of interacting small agents
(including evolutionary growth),
namely pairwise, in groups and by coalition formation,
to a deterministic evolution on the distributions of the state spaces of small players paying main attention
to situations with an infinite state-space of small players.
We supply rather precise rates of convergence.
The theoretical results of the paper are applied to the analysis
of the processes of inspection, corruption, cyber-security, counter-terrorism,
banks and firms merging, strategically enhanced preferential attachment and many other.
\end{abstract}

{\bf Mathematics Subject Classification (2010)}: 91A22, 91A40, 91A80, 91F99, 60J20
\smallskip\par\noindent
{\bf Key words}: inspection, corruption, cyber-security, crime prevention,
geopolitics, counterterrorism, optimal allocation, evolutionary game, major player, coalition growth,
pressure and resistance, social norms, networking, law of large numbers,
strategically enhanced preferential attachment

\section{Introduction}

\subsection{Objectives and content of the study}

The inspection games represent an important class of games with various
applications from the arms race control to the study of tax evasion,
see  e. g. \cite{AvSZ2002} for a general survey, as well as
\cite{A2008}, \cite{AC2005}, \cite{AK2004} and references therein.
In \cite{KoPaYa} the author with coworkers initiated the study of
the inspection games from the evolutionary perspective, aimed at analysis
of the class of games with large number of inspectees.

The aim of the present paper is two-folds: 1) To widen the range of applicability
of this research by introducing a unified methodology for the analysis of a large class of conflict
interactions of social, economic or military character (that turn out to be mathematically similar,
but are often discussed in disjoint sets of subject specific journals)
describing the pressure executed by a big player (or principal)
on a large group of small players that resist the pressure or collaborate, that is the class
of games of an agent immersed into a pool of evolutionary and mean-field interacting
small players; 2) to build the rigorous mathematical theory of the law of large number limits
for the latter conflicts by proving that the controlled deterministic
evolutionary equation (kinetic equation) describing the dynamics of interaction can be obtained as the
limiting behavior of the controlled Markov models of $k$th order and/or mean-field
interaction (with the number of agents tending to infinity) and thus extending the corresponding theory
for the justification of the usual replicator dynamics (see e.g. \cite{BeWe}
or Section 11.9 of textbook \cite{KolMal1} for the latter).
The practical usefulness of this limit is that it provides much more tractable limiting
models where carrying out a traditional Markov decision analysis for a large state space is often unfeasible.

The paper is organized as follows.
In the next introductory subsections we discuss the related literature on the dynamic law of large numbers
and then  motivate our analysis by invoking certain real life conflict interactions
that can be analyzed via our general model providing social, economic, historic, geopolitical
and literary perspectives. The next section is devoted to the simple case of a 'short-sighted'
principal with the direct best response strategy.
We deduce rather precise convergence rates in terms of the averages of smooth functions (rather than
more developed  estimates for trajectories, see \cite{BeWe}) and provide the crucial
link between the fixed point of the limiting dynamics and the Nash equilibria
of the corresponding $N$-player game (which is quite different from the usually discussed link with
the underlying two player game of the standard evolutionary setting, which is not defined in our setting).
This simplest framework presents a handy opportunity to discuss in the most transparent way
our basic examples of payoffs related to various contexts thus leading to a unified theory
of various subject areas. Subsection \ref{secoptimalocandgroup} is devoted to more-or-less straightforward
(from the mathematical point of view) extensions of the basic model that include
the possibility of simultaneous interactions of more than two players ($k$th order interaction),
as well as of diversified strategies of the principle solving the
optimal allocation problem on evolutionary background.
Section \ref{secforwardprin1} provides the convergence (with the rates) of $N$-player games
to a deterministic limit for a more sophisticated (but more realistic) setting of a forward looking
major player. Section \ref{secmodgrowthpres} initiates the analysis of the controlled law of large numbers for processes
with unbounded intensities defined on a countable (rather than finite or compact) state space, which leads to modeling
processes of evolutionary growth with variable population
size of small players. This includes various processes of birth, death, migration and coalition formation,
which are strategically enhanced in the sense that their evolutions are subject to controlled external pressure.
In Appendix we explain some auxiliary facts about variational derivatives,
ODEs in Banach spaces and the comparison of semigroups.


Let us indicate further steps (in addition to those outlined in Subsection \ref{secbirthdeath}
and at the ends of most of the sections)
that are worth being exploited in the future work on the models discussed here.
(1) It should be of interest to analyze next order approximation to the dynamic law of large numbers studied here,
which can be carried out in two similar (but different) ways: by including in the generator the second order (diffusive)
terms of order $1/N$ (as is done in paper \cite{MGT} for standard evolutionary games or in \cite{GoShah11},
\cite{GoKo14} for the chemical kinetics setting) or by systematic study of fluctuations
as dynamic central limit theorem (as done in \cite{Ko10} for classical models). (2) It is natural to include
possible spatial distributions (which can lead to quite remarkable effects, see e.g.  \cite{YNRS}) aiming at
the analysis of various models of crime detection and relating to the well developed theory of patrolling games,
see \cite{Alp14}, \cite{Alp13}, \cite{Alp11} and references therein.
(3) We consider a single major player in the pool of small players; it is natural to extend the model to
the general finite player game on the evolutionary background.
(4) Allowing the principal to withdraw from the interaction (to retire) would lead to the optimal
 stopping problem on the evolutionary background and, in particular, to the evolutionary extension
 of the well studied multi-armed bandit problem (see e. g. \cite{ElKaKa} and \cite{GinBandit89}
 for the background on the latter).

 All notations for the norms and spaces used are carefully introduced in the Appendix.

\subsection{Related work on dynamic law of large numbers}

In this section we discuss the papers
that are relevant more to our methodology itself rather than its concrete applications.
Roughly speaking, this methodology concerns the rigorous derivation of the dynamic law of large numbers
for Markov dynamics with control, competition and/or cooperation.
The literature on the topic is quite abundant and keeps growing rapidly.

First of all, our model of evolutionary type behavior of species in reaction to the
actions of the distinguished major player bears similarity
with the recently developed models of mean-field games with a major
player (see \cite{Hu10}, \cite{NoCa13}, \cite{WaZh13}, \cite{KoYaInsp}),
where also the necessity to consider
various classes of players is well recognized, see also
\cite{Bens} and \cite{CarDel}.
However, unlike the mean-field game setting,
(see e. g. \cite{BenFr}, \cite{LL2006}, \cite{HCM3}), our species do not
rationally optimize the strategies based on the observed environment,
but rather mechanically copy (myopic hypothesis) better strategies of randomly chosen neighbors.

The paper \cite{GaGaLe} proves the convergence (after a natural scaling) of a centrally
controlled discrete-time Markov chain of large number of constituents
to the deterministic continuous-time dynamics given by ordinary differential equations.
Similar results are obtained in \cite{Ko12} for continuous-time Markov chains
with possibly competitive control.

The derivation of various evolutionary dynamics as the dynamic law of large number for
 Markov models of binary or mean-field interaction is well developed in the literature
 on evolutionary games. For instance, paper \cite{Boy95} proves the convergence to
 a deterministic ODE of the Markov model, where the pairwise interaction is organized in
 discrete time so that at any moment a given fraction $\al(N)$ of a homogeneous population of $N$ species
 is randomly chosen and decomposed into matching pairs, which afterwards experience simultaneous
transformations into other pairs according to a given distribution. Paper
\cite{CorSar00} extends this setting to include several types of species
and the possibility of different scaling that may lead, in the limit $N\to \infty$, not only to ODE,
but to a diffusion process. In \cite{Ko04a} the general class of stochastic dynamic law of large number
is obtained from binary or more general $k$th order interacting particle systems
(including jump-type and L\'evy processes as a noise). The study of \cite{BeWe}
 concentrates on various subtle estimates for the deviation
of the limiting deterministic evolution from the approximating Markov chain for the evolution
that allows a single player (at any random time) to change her strategy to the strategy of another
randomly chosen player.


 A related trend of research analyzes various choices of Markov approximation to repeated games and their consequences
to the question of choosing a particular Nash equilibrium amongst the usual multitude of them.
Seminal contribution \cite{KaMaRo} distinguishes specifically the myopic hypothesis, the mutation or experimentation
hypothesis and the inertia hypothesis in building a Markov dynamics of interaction.
As shown in \cite{KaMaRo} (with similar result in \cite{You93}), introducing
mutation of strength $\la$ and then passing to the limit $\la \to 0$ allows one to choose a certain particular Nash equilibrium,
called a long run equilibrium (or statistically stable, in the terminology of \cite{FoYou}) that for some coordination games turns out to coincide
with the risk-dominant (in the sense of \cite{HaSe}) equilibrium. Further important contributions in this direction include
\cite{Elli}, \cite{BinSam}, \cite{BinSamVa} showing how different equilibria could be obtained by a proper
fiddling with noise (for instance local or uniform as in \cite{Elli}) and discussing the important practical
 question of 'how long' is the 'long-run' (for a recent progress on this question see \cite{KrYi13}).
In particular paper \cite{BinSamVa} discusses in detail the crucial question of the effect of applying
the limits $t\to \infty$, $\tau \to 0$ (the limit from discrete to continuous replicator dynamics), $N\to \infty$ and
$\la \to 0$ in various order. Further development of the idea of local interaction leads naturally
to the analysis of the corresponding Markov processes on large networks, see \cite{LoPi06} and references therein.
Some recent general results of the link between Markov approximation to the mean field (or fluid) limit can be found in
\cite{LeBo13} and \cite{BeLeBo08}.
Though in many papers on Markov approximation, the switching probabilities of a revising player depends
on the current distribution of strategies used (assuming implicitly that this distribution is observed by all players)
there exist also interesting results (initiated in \cite{San01}, see new developments in \cite{San12})
arising from the assumption that the switching of a revising player is based
on an observed sample of given size of randomly chosen other payers.

In the abundant literature on the models of evolutionary growth (see \cite{SimRoy} for a review),
the discussion usually starts directly with the deterministic limiting model,
with the underlying Markov model being just mentioned as a motivating heuristics.

\subsection{Informal description of the model}
\label{secinformprinc}

The models we discuss here in laymen terms will be given precise mathematical meaning in Subsection
\ref{secexampmatpressureevol}.

In the inspection game with a large number of inspectees, see \cite{KoPaYa}, any one from a large group of $N$ inspectees
has a number of strategies parametrized by a finite or infinite set of nonnegative numbers $r$ indicating the level at which she
chooses to break the regulations ($r=0$ corresponds to the full compliance). These can be the levels of tax evasion, the levels
of illegal traffic through a check point, the amounts at which the arms production exceeds the agreed level, etc. On the other hand, a specific player,
the inspector, tries to identify and punish the trespassers. Inspector's strategies are
real numbers $b$ indicating the level of her involvement in the search process, for instance, the budget spent on it, which
is related in a monotonic way to the probability of the discovery of the illegal behavior of trespassers.
The payoff of an inspectee depends on whether her illegal behavior is detected or not. If social norms are taken into account,
 this payoff will also depend on the overall crime level
 of the population, that is, on the probability distribution of inspectees playing different strategies.
 The payoff of the inspector may depend on the fines collected from detected violators, on the budget spent and again
on the overall crime level (that she may have to report to governmental bodies, say).  As time goes by, random pairs of inspectees can communicate
in such a way that one inspectee of the pair can start copying the strategy of another one if it turns out to be more beneficial.
Then one can argue that this evolution (or more precisely, its limit as $N\to \infty$)
eventually settles down to one of its stable equilibria. The analysis of such equilibria was the main objective of \cite{KoPaYa}.

This model naturally extends to a more general setting where a distinguished 'big' player exerts certain level $b$ of pressure
on (or interference into the affairs of) a large group of $N$ 'small' players that can resist this pressure on a level $r$.
The term 'small' reflects the idea that the influence of each particular player becomes negligible as $N\to \infty$.
As an example of this general setting one can mention the interference of humans on the environment (say, by hunting or fishing)
or the use of medications to fight with infectious bacteria in a human body,
with resisting species having the choice of occupying the areas of ample foraging but more dangerous interaction with the big player
(large resistance levels $r$) or less beneficial but also less dangerous areas (low $r$). Another example can be the level of
 resistance of the population on a territory occupied by military forces.

 A slightly new twist to the model presents
 the whole class of games modeling corruption (see \cite{Aidt}, \cite{Jain},
 \cite{LaMoMaRa09}, \cite{Mal14} and \cite{KolMa15} and references therein for a general background). For instance,
 developing the initial simple model of \cite{Beck}, a large class of these games studies the strategies
 of a benevolent principal (representing, say, a governmental body that is interested in the efficient development
 of economics) that delegates a decision-making power to a non-benevolent (possibly corrupt) agent, whose behavior (legal or not)
 depends on the incentives designed by the principal. The agent can deal, for example, with tax collection of firms.
The firms can use bribes to persuade a corrupted tax collector to accept falsified revenue reports.
In this model the set of inspectors can be considered as a large group of small players
 that can choose the level of corruption (quite in contrast to the classical model of inspection)
 by taking no bribes at all, or not too much bribes, etc.
The strategy of the principal consists in fiddling with two instruments:
choosing wages for inspectors (to be attractive enough, so that the agents should be afraid to loose it) and investing
in activities aimed at the timely detection of the fraudulent behavior. Mathematically these two
 types are fully analogous to preemptive and defensive methods discussed in the literature on counterterrorism
 (described in detail below in Subsection \ref{secexampmatpressureevol}).

Another 'linguistic twist' that changes 'detected agents' to 'infected agents' brings us directly
to the (seemingly quite different) setting of cyber-security or biological attack-defence games. Yet another
'turn of the screw' that extends the setting (more-or-less straightforwardly) to possibly different classes of small players,
brings us to the domain of optimal allocation games, but now in the competitive evolutionary setting, where
the principal (say an inspector) has the task to distribute limited resources as efficiently as possible.
As another related area let us stress the analysis of terrorism and counterterrorist measures, where it is natural
to consider terrorists or terrorists organizations as small players against a principal representing a government
of a target country.

Furthermore, in many situations, the members of the pool of small players have an alternative class of strategies
of collaborating with the big player on various levels $c$. The creation of such possibilities can be considered as a
strategic action of the major player (who can thus exert some control on the rules of the game).
In biological setting this is, for instance, the strategy of dogs
joining humans in hunting their 'relatives' wolves or foxes (nicely described poetically as the talk between a dog and a fox
in the famous novel \cite{SBambi}). Historical examples include the strategy of slaves helping their masters to terrorize
and torture other slaves and by doing this gaining for themselves more beneficial conditions, as described e.g. in the classics \cite{TomCab}.
As a military example one can indicate the strategy of the part of the population on a territory occupied by foreign militaries
that joins the local support forces for the occupants, for US troops in Iraq this strategy being well discussed in
Chapter 2 of \cite{Mesq}. Alternatively, this is also the strategy of population helping police to fight with criminals and/or terrorists.
In the world of organized crime it is also a well known strategy to play simultaneously both resistance (committing crime) and
collaboration (to collaborate with police to get rid of the competitors), the classic presentation in fiction being novel \cite{Fieldi}.

It is worth stressing the existence of a large number of problems, where it is essential to work with infinite state-space
of small players, in particular, with the state-space being the set of all natural numbers. Mathematical results are much rare
for this case, as compared with finite state-spaces, and we pay much attention to it. This infinite-dimensional setting is crucial
for the  analysis of models with growth, like merging banks or firms on the market (see \cite{Pushkin04} and \cite{SaMaSo})
or the evolution of species and the development of networks with preferential attachment (the term coined in \cite{BaAl99}),
for instance scientific citation networks or the network of internet links (see a detailed discussion in \cite{KraRed}).
Models of growth are known to lead to power laws in equilibrium, which are verified in a variety of real life processes,
see e.g. \cite{SaMaSo} for a general overview and \cite{Rich48} for particular applications in crime rates.
Here we are interested in the response of such system to external parameters that may be set by the principal
(say, by governmental regulations) who has her own agenda (may wish to influence the growth of certain economics sectors).
Apart from the obvious economic examples mentioned above, similar process of the growth of coalitions under pressure
can be possibly used for modeling the development of human cooperation (forming coalitions
under the 'pressure' exerted by the nature) or the creation of liberation armies
(from the initially small guerillas groups) by the population of the territories oppressed by an external military force.
Of course these processes have a clear physical analogs, say the formation of dimers and trimers by the molecules of gas
with eventual condensation under (now real physical) pressure. The relation with the Bose-Einstein condensation is also
well known, see e. g. \cite{BiBa01} and \cite{SimRoy}.

\section{The best response principal}

\subsection{Discrete setting}
\label{secbestrespprin1}

We shall consider a game of a major 'big' player $P$ (the principal) with a group of small (indistinguishable) players.
The strategies of the big player are points $b$ in a compact convex subset of a Euclidean space. In the simplest
examples points $b$ belong to a closed interval and can be interpreted as
 the level of involvement in the actions of the group (say, a budget of a big player).
In general, its multidimensional character is natural as describing possible various
instruments that can be used to influence other players or various allocations to groups of small players with various strategies.

Let us start with the case of a finite number of strategies $\{1, \cdots, d\}$ of each small player.
Thus the state space of the group is $\Z^d_+$, the set of sequences of $d$ non-negative integers $n=(n_1,...,n_d)$,
 where each $n_i$ specifies the number of players in the state $i$. Let $N$ denote the total
 number of players in the state $n$: $N=n_1+...+n_d$. For $i\neq j$ and a state $n$ with $n_i>0$ denote by
 $n^{ij}$ the state obtained from $n$ by removing one agent of type $i$ and adding an agent of type $j$,
 that is $n_i$ and $n_j$ are changed to $n_i-1$ and $n_j +1$ respectively.
Let the payoff $R_i(x,b)$ of the strategy $i$ against the player $P$ be a continuous function of the strategy $b$ of $P$ and
the overall distribution
\[
x=(x_1, \cdots, x_d)= (n_1, \cdots, n_d)/N \in \Si_d
\]
of the strategies applied, where $\Si_d$ is the standard simplex of vectors with non-negative coordinates
summing up to $1$ (that is, the set of probability laws on $\{1, \cdots, d\}$).

Assuming that $P$ has some strategy $b(x,N)$ let us consider the following Markov model of the interaction of the group.
With some rate $\ka/N$ any pair of agents can meet and discuss their payoffs. This discussion may result in the player with lesser
payoff $R_i$ switching to the strategy with the better payoff $R_j$, which may occur with probability $\al(R_j-R_i)$,
where $\al>0$ is a proportionality constant. In future we set $\al=1$, as it can be directly incorporated in $\ka$.

\begin{remark}
We are working here with a pure myopic behavior for simplicity. Introduction of random mutation
on global or local levels (see e. g. \cite{KaMaRo} for standard evolutionary games) would
not affect essentially the convergence result below,
but would lead to serious changes in the long run of the game, which are worth being exploited.
\end{remark}

More rigorously, the process is described as follows. At the initial moment to any pair of agents $\{A_i,A_j\}$
(where $A_i$ and $A_j$ are in the state $i$ and $j$ respectively) is attached a random clock,
which will click after $\al |R_j-R_i|/N$-exponential waiting time (the expectation of this time is $N/\al |R_j-R_i|$).
The minimum of all these independent
$N(N-1)$ exponential waiting times is of course
also an exponential waiting time. If this minimum is realized on the pair $\{A_i,A_j\}$, then the agent with the lower $R$, say $A_i$,
changes her state to the one with higher $R$, say $A_j$, and the process continues analogously from the new state
(all clocks are set to zero). (Alternatively, the same process is described by one exponential clock such that, when it clicks,
the updating pair $(i,j)$ is chosen with probability proportional to the product $n_in_j$ of sizes of each strategy
and the difference of their payoffs.)
This process is a continuous-time Markov chain on $\Z^d_+$ with the generator
\[
L_{b,N}f (n)=\frac{1}{N}\sum_{i,j: R_j(n/N,b(n/N,N))>R_i(n/N,b(n/N,N))} \ka n_i n_j
\]
\begin{equation}
\label{eqdefgenmeanfpool1}
\times
[R_j(n/N,b(n/N,N))-R_i(n/N,b(n/N,N))][f(n^{ij})-f(n)].
\end{equation}
In terms of distributions $x=n/N$ it becomes
\[
L_{b,N}f (x)=N\sum_{i,j: R_j(x,b(x,N))>R_i(x,b(x,N))} \ka x_i x_j
\]
\begin{equation}
\label{eqdefgenmeanfpool2}
\times
[R_j(x,b(x,N))-R_i(x,b(x,N))][f(x-e_i/N+e_j/N)-f(x)],
\end{equation}
where $e_1,...,e_d$ denotes the standard basis in $\R^d$.

We are interested in the asymptotic behavior of the chains
generated by $L_{b,N}$, as $N \to \infty$.
As will be shown, the limiting process turns out to be a deterministic one governed by the system of ODE
\begin{equation}
\label{eqdefgenmeanfpool6}
\dot x_j=\sum_{i} \ka x_i x_j[R_j(x,b(x))-R_i(x,b(x))], \quad j=1,...,d,
\end{equation}
which is the system of kinetic equations generalizing (and modifying) the usual replicator
dynamics. At the end of this section we shall discuss some consequences to the corresponding game with
finite number of players.

\begin{remark}
The heuristic arguments leading to the equations of type \eqref{eqdefgenmeanfpool6} are well presented
in the literature (see e. g. \cite{B2004} or \cite{KoPaYa}) and will not be reproduced here.
The general context of deterministic limit is discussed in \cite{Ko12}.
\end{remark}

To go further we have to model the behavior of the major player.
As a warm-up, we start in this section with a simpler case of a short-sighted major player
that can make instantaneous adjustments to her strategy without additional costs.
Namely, let us assume that the payoff of $P$ playing against the group of small players is given by
a function $B(x,b,N)$, which is smooth and concave in $b$, so that for all $x,N$
the maximum point
\begin{equation}
\label{eqdefgenmeanfpool6b0}
b^*(x,N)=argmax \, B(x,b,N)
\end{equation}
is uniquely defined, and that $P$ chooses $b^*(x,N)$ as her strategy at any time.

Let us denote by $X^*_{N}(t,x)$ the Markov chain generated by \eqref{eqdefgenmeanfpool2}
and starting in $x\in \Z^d_+/N$ at the initial time $t=0$,
with $b^*$ used instead of $b$.

We use the (standard) notations for norms, Lipschitz norms and functional spaces specified in Appendix \ref{secnotspace}.

\begin{theorem}
\label{th1}
 Assume
\begin{equation}
\label{eqdefgenmeanfpool6b00}
|b^*(x,N)-b^*(x)|\le \ep (N),
\end{equation}
with some $\ep(N)\to 0$, as $N\to \infty$ and some function $b^*(x)$,
and let the functions $R_i(x,b)$, $b^*(x,N)$ and $b^*(x)$ belong to $C_{bLip}$ in all variables with
norms uniformly bounded by some $\om >0$. Suppose the initial data $x(N)$ of the Markov chains  $X^*_{N}(t,x(N))$
converge to a certain $x$ in $\R^d$, as $N\to \infty$. Then these
Markov chains  converge in distribution to the deterministic
evolution $X_t(x)$ solving the equation
\begin{equation}
\label{eqdefgenmeanfpool6brep}
\dot x_j=\sum_{i} \ka x_i x_j[R_j(x,b^*(x))-R_i(x,b^*(x))], \quad j=1,...,d,
\end{equation}
with initial condition $x$. This equation is globally well-posed: for any initial $x\in \Si_d$, the solution $X_t(x)$ exists and
belongs to $\Si_d$ for all times $t$.

For smooth or Lipschitz $g$, the following rates of convergence are valid:
\begin{equation}
\label{th1eq1}
 |\E g (X_N^*(t,x(N))-g(X_t(x(N)))| \le t C(\om,t) \left(\frac{d}{\sqrt N}+\ep(N)\right) \|g\|_{C^2(\Si_d)},
 \end{equation}
\begin{equation}
\label{th1eq1o}
 |\E g (X_N^*(t,x(N))-g(X_t(x(N)))| \le C(\om,t) \left(\frac{d t^{2/3}}{N^{1/3}}+t\ep(N)\right) \|g\|_{bLip},
 \end{equation}
 \begin{equation}
\label{th1eq1a}
|g(X_t(x(N))-g(X_t(x))|\le C(\om,t) \|g\|_{bLip}|x(N)-x|
\end{equation}
with constants $C(\om,t)$ uniformly bounded for bounded sets of $\om$ and $t$.
\end{theorem}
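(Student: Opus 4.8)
The plan is to treat the statement in three stages: global well-posedness and invariance of the limiting flow, a semigroup comparison producing the $C^2$-rate \eqref{th1eq1}, and a mollification argument transferring it to Lipschitz data \eqref{th1eq1o}. First I would write the right-hand side of \eqref{eqdefgenmeanfpool6brep} as a vector field $V(x)$ with components $V_j(x)=\sum_i\ka x_i x_j[R_j(x,b^*(x))-R_i(x,b^*(x))]$. The hypotheses force $R_i(\cdot,b^*(\cdot))\in C_{bLip}$, so $V$ is a bounded-Lipschitz field on $\Si_d$ with constant $O(\om)$. Two algebraic facts make $\Si_d$ forward invariant: relabelling $i\leftrightarrow j$ and summing over $j$ gives $\sum_j V_j\equiv 0$, so $\sum_j x_j$ is conserved; and $V_j$ carries the factor $x_j$, so $V_j=0$ on the face $\{x_j=0\}$, whence $V$ is tangent to each face and no trajectory leaves the simplex. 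Since $\Si_d$ is compact and invariant and $V$ is Lipschitz, the Picard--Lindel\"of theorem (in the Banach-space form recalled in the Appendix) yields a unique $X_t(x)\in\Si_d$ for all $t$, which is the asserted global well-posedness; Gronwall's inequality simultaneously gives $|X_t(x)-X_t(y)|\le e^{O(\om)t}|x-y|$, so \eqref{th1eq1a} is immediate from $|g(X_t(x(N)))-g(X_t(x))|\le\|g\|_{bLip}|X_t(x(N))-X_t(x)|$.

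For \eqref{th1eq1} I would compare the Markov semigroup $T^N_tg(x)=\E g(X^*_N(t,x))$ with the flow semigroup $U_tg(x)=g(X_t(x))$ through the comparison-of-semigroups identity
$$U_tg-T^N_tg=\int_0^tT^N_{t-s}(\La-L_{b,N})U_sg\,ds,$$
where $\La h=V\cdot\nabla h$ generates $U_t$ and $L_{b,N}$ is \eqref{eqdefgenmeanfpool2}; since $T^N_{t-s}$ is a sup-norm contraction, everything reduces to estimating $(\La-L_{b,N})U_sg$. Taylor-expanding the increments $h(x+(e_j-e_i)/N)-h(x)$ to first order, the leading part of $L_{b,N}h$ collapses, after the antisymmetrization $i\leftrightarrow j$, exactly onto $V^N\cdot\nabla h$, where $V^N$ is $V$ with $b^*(x)$ replaced by $b^*(x,N)$. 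Hence the only genuine errors are the second-order remainder, of size $O(\|h\|_{C^2}/N)$ (the factor $d$ entering through the $d$ coordinate directions of the Hessian), and the mismatch $\|V^N-V\|\le O(\om)\ep(N)$.

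I expect \textbf{this regularity issue to be the crux}: because $V$ is only Lipschitz, the flow $X_t(\cdot)$ is Lipschitz but not $C^1$, so $\|U_sg\|_{C^2}$ is \emph{not} controlled and the naive $1/N$ rate is unavailable. I would resolve this by mollifying $V$ at scale $\de$ to a smooth field $V_\de$ with $\|V-V_\de\|\le O(\om)\de$ and $\|D^2V_\de\|=O(\om/\de)$, and running the comparison against the mollified flow $U^\de_s$, whose generator is $\La^\de=V_\de\cdot\nabla$. The first and second variational equations for $X^\de_s$ give $\|\partial_xX^\de_s\|\le e^{O(\om)t}$ and, crucially \emph{linearly} (not exponentially) in $1/\de$, $\|\partial^2_xX^\de_s\|=O(\om/\de)\,e^{O(\om)t}$, so $\|U^\de_sg\|_{C^2}\le C(\om,t)\|g\|_{C^2}/\de$. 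The integrand $(\La^\de-L_{b,N})U^\de_sg$ then splits into the Taylor remainder $O(\|U^\de_sg\|_{C^2}/N)$ and the field discrepancy $(V_\de-V^N)\cdot\nabla U^\de_sg$ of size $O((\de+\ep(N))\|U^\de_sg\|_{C^1})$; adding the perturbation $\|U_tg-U^\de_tg\|\le C(\om,t)\de\|g\|_{bLip}$ between true and mollified limits, the total error is $C(\om,t)\big[t(N\de)^{-1}\|g\|_{C^2}+\de\|g\|_{bLip}+t\ep(N)\|g\|_{C^1}\big]$, and the choice $\de\sim N^{-1/2}$ balances the first two terms to give \eqref{th1eq1}.

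For Lipschitz $g$ the $C^2$ norm is infinite, so I would insert a second mollification $g\mapsto g_{\de'}$ with $\|g-g_{\de'}\|\le\de'\|g\|_{bLip}$, $\|g_{\de'}\|_{C^2}\le\|g\|_{bLip}/\de'$ and $\|g_{\de'}\|_{bLip}\le\|g\|_{bLip}$. Feeding this into the previous bound, the dominant contributions become $C(\om,t)\|g\|_{bLip}\big[t(N\de\de')^{-1}+\de+\de'+t\ep(N)\big]$, and minimizing in both scales, $\de\sim\de'\sim N^{-1/3}$, produces the cube-root rate \eqref{th1eq1o}; the worsened power of $N$ is exactly the price of the extra mollification forced by the low regularity of both $V$ and $g$. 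Finally, convergence in distribution follows from these estimates: the marginals live in the compact set $\Si_d$, so tightness is automatic, and $\E g(X^*_N(t,x(N)))\to g(X_t(x))$ for every $g\in C_{bLip}(\Si_d)$---a convergence-determining class on $\Si_d$---identifies the limit as the Dirac mass at $X_t(x)$.
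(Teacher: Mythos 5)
Your proposal is correct and follows essentially the same route as the paper: establish invariance of $\Si_d$ and well-posedness for the Lipschitz field, then run the semigroup comparison identity against a flow with mollified coefficients (so that the $C^2$ norm of $U^s_\de g$ grows only like $1/\de$), balance the Taylor remainder $t/(N\de)$ against the mollification error $\de$ to get $\de\sim N^{-1/2}$ for \eqref{th1eq1}, and add a second mollification of $g$ at scale $\tilde\de$ to obtain the $N^{-1/3}$ rate of \eqref{th1eq1o}. The only differences are cosmetic (you mollify the vector field directly rather than the payoffs $R_j(\cdot,b^*(\cdot))$, and you track the $t$- and $d$-dependence of the constants slightly less precisely than the paper's choice $\de=(tN)^{-1/3}$, $\tilde\de=t^{2/3}N^{-1/3}$).
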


\begin{remark}
(i) We separate \eqref{th1eq1a} from \eqref{th1eq1} to stress that \eqref{th1eq1} holds without
the assumption of the convergence $x(N)\to x$. The dependence on $t$ and $d$ is not essential here,
but the latter becomes crucial for dealing with infinite state-spaces, while the former for dealing
with a forward looking principal.
(ii) The convergence result of (i) follows more-or-less directly from the general theory
(the settings of \cite{BeWe} or Section 11.9 of \cite{KolMal1} are just slightly different).
We give an analytic proof aiming at the effective rates of weak convergence, improving essentially the results
of \cite{Ko12} that dealt with smooth coefficients $R$.
\end{remark}

\begin{proof}
The well-posedness of  \eqref{eqdefgenmeanfpool6brep} is more or less obvious, and
it is a particular case of more general Theorem 6.1 of \cite{Ko10} or
Lemma \ref{lemonwellposedinl1} of Appendix (with the barrier $L$ being identically $1$).
Once the well-posedness is established, the  Lipshitz continuity \eqref{th1eq1a} of the solutions is
a standard fact from the theory of ODEs.

Next, since any function $g\in C(\R^d)$ can be approximated by functions from $C^2(\R^d)$,
the convergence of Markov chains from Statement (i) follows from \eqref{th1eq1} and \eqref{th1eq1a}.
Thus it remains to show \eqref{th1eq1} and \eqref{th1eq1o}.

Let us start with some calculations concerning $L_{b,N}$ assuming that
$\lim_{N\to\infty} b(x,N)=b(x)$ exists and that
$f\in C^1(\Si_d)$. Then we find, expanding $f$ in Taylor series, that
\[
\lim_{N \to \infty,\,  n/N \to x} L_{b,N}f (n/N)=\La_{b} f(x),
\]
where
\begin{equation}
\label{eqdefgenmeanfpool4}
\La_{b} f (x)=\sum_{i,j: R_j(x,b(x))>R_i(x,b(x))} \ka x_i x_j [R_j(x,b(x))-R_i(x,b(x))]
\left[\frac{\pa f}{\pa x_j}-\frac{\pa f}{\pa x_i}\right](x),
\end{equation}
or equivalently
\begin{equation}
\label{eqdefgenmeanfpool5}
\La_{b} f (x)=\sum_{i,j=1}^d \ka x_i x_j [R_j(x,b(x))-R_i(x,b(x))]\frac{\pa f}{\pa x_j}(x).
\end{equation}

Thus the limiting operator $\La_{b} f$ is the first-order PDO with characteristics solving the equations
\eqref{eqdefgenmeanfpool6}, which turn to the required equations \eqref{eqdefgenmeanfpool6brep} when $b=b^*$.
What is left is the rigorous proof that the convergence of the generators  $L_{b^*,N}$ to $\La_{b^*}$
on smooth functions $f$ implies the convergence of the corresponding semigroups.

The main idea is to approximate all Lipschitz continuous functions involved by the smooth ones.
Namely, choosing an arbitrary mollifier $\chi$  (non-negative infinitely smooth even function on $\R$ with
 a compact support and $\int \chi (w) \, dw=1$) and the corresponding mollifier $\phi(y)=\prod \chi (y_j)$
 on $R^{d-1}$, let us define, for any function $V$ on $\Si_d$,
 its approximation
 \[
 \Phi_{\de}[V](x)=\int_{R^{d-1}}\frac{1}{\de^{d-1}} \phi \left(\frac{y}{\de}\right)V(x-y) \, dy
 =  \int_{\R^{d-1}}\frac{1}{\de^{d-1}} \phi \left(\frac{x-y}{\de}\right)V(y) \, dy.
 \]
 Notice that $\Si_d$ is $(d-1)$-dimensional object, so that any $V$ on it can be considered
 as a function of first $(d-1)$ coordinates of a vector $x\in \Si_d$ (continued to $\R^{d-1}$
 in an arbitrary continuous way).
 It follows that
 \begin{equation}
\label{eq3thMarkDeconEvolBack1}
\|\Phi_{\de}[V]\|_{C^1} =|\Phi_{\de}[V]\|_{bLip}\le  \|V\|_{bLip}
\end{equation}
for any $\de$ and
\[
 |\Phi_{\de}[V](x)-V(x)|\le \int\frac{1}{\de^{d-1}} \phi \left(\frac{y}{\de}\right)|V(x-y)-V(x)| \, dy
 \]
 \begin{equation}
\label{eq2thMarkDeconEvolBack1}
 \le \|V\|_{Lip}  \int\frac{1}{\de^{d-1}} \phi \left(\frac{y}{\de}\right) |y|_1\, dy
 \le \de (d-1) \|V\|_{Lip}   \int |w| \chi (w) \, dw.
 \end{equation}

\begin{remark}
We care about dimension $d$ in the estimates only for future use (here it is irrelevant). By a different choice
of mollifier $\phi$ one can get rid of $d$ in \eqref{eq2thMarkDeconEvolBack1}, but then it would pop up
in \eqref{eq4thMarkDeconEvolBack1}, which is avoided with our $\phi$.
\end{remark}

Next, the norm
$ \|\Phi_{\de}[V]\|_{C^2}$
does not exceed the sum of the norm $ \|\Phi_{\de}[V]\|_{C^1}$
and the supremum of the Lipschitz constants of the functions
\[
\frac{\pa}{\pa x_j} \Phi_{\de}[V](x)=\int\frac{1}{\de^d} \left(\frac{\pa}{\pa x_j} \phi\right) \left(\frac{y}{\de}\right)V(x-y) \, dy.
\]
Hence
\begin{equation}
\label{eq4thMarkDeconEvolBack1}
\|\Phi_{\de}[V]\|_{C^2} \le \|V\|_{bLip}\left(1 + \frac{1}{\de} \int |\chi'(w) | \, dw\right).
\end{equation}

Let $U_N^t$ denote the semigroup of the chain $X^*_{N}(t,x)$: $U_N^tg(x)=\E g(X^*_{N}(t,x))$,
and $U^t$ the semigroup of the deterministic process generated by \eqref{eqdefgenmeanfpool6brep}:
$U^tg(x)=g(X_t(x))$. Let $U_{N,\de}^t$ and $U_{\de}^t$ be the same semigroups but built with respect to the
functions
\[
\Phi_{\de}[R_j](x)=\int\frac{1}{\de^d} \phi \left(\frac{y}{\de}\right)R_j(x-y,b^*(x-y)) \, dy
\]
rather than $R_j(x, b^*(x,N))$ and $R_j(x, b^*(x))$ respectively.
Similarly we denote by $L^{\de}_{b^*,N}$ and $\La^{\de}_{b^*}$ the corresponding generators
and by $X_t^{\de}(x)$ the solution of \eqref{eqdefgenmeanfpool6brep} with $\Phi_{\de}[R_j]$ used instead of $R_j$.

Then
\[
|\frac{d}{dt} (X_t(x)- X_t^{\de}(x))|_1\le 2\de + 4 \om |X_t(x)- X_t^{\de}(x)|_1,
\]
implying that $|X_t(x)- X_t^{\de}(x)|_1 \le \de t C(\om,t)$ and hence
\begin{equation}
\label{th1eq2}
|U^tg(x)-U^t_{\de}g(x)|=|g(X_t(x)-g(X_t^{\de}(x))|\le \|g\|_{bLip} \de t C(\om,t).
\end{equation}
Moreover, by Lemma \ref{lemonsenseBanach1} (its simplest finite dimensional version) and \eqref{eq4thMarkDeconEvolBack1}
\begin{equation}
\label{th1eq3}
|U^t_{\de}g(x)|_{C^2} \le C(\om,t) \left( \|g\|_{C^2} +\frac{1}{\de} \|g\|_{bLip} \right).
\end{equation}

Next we use \eqref{eqcompsem2} to get
\[
\|U_N^tg -U^t_{\de}g\|\le t \sup_{s\in [0,t]}\|(L_{b^*,N}-\La^{\de}_{b^*})U^s_{\de} g\|
\]
\begin{equation}
\label{eqcompsem2rep}
\le  t \sup_{s\in [0,t]}
\left( \|(L_{b^*,N}-L^{\de}_{b^*,N})U^s_{\de} g\| + \|(L^{\de}_{b^*,N} -\La^{\de}_{b^*})U^s_{\de} g\|\right).
\end{equation}

Then
\[
 \|(L_{b^*,N}-L^{\de}_{b^*,N})U^s_{\de} g\|\le C(\om) (\ep(N)+d \de) \|U^s_{\de} g\|_{bLip}
 \le C(\om,s)  (\ep(N)+d \de) \|g\|_{bLip},
 \]
 and (using \eqref{th1eq3})
 \[
\|(L^{\de}_{b^*,N} -\La^{\de}_{b^*})U^s_{\de} g\|  \le C(\om,t) \frac{1}{N} \|U^s_{\de} g\|_{C^2}
\le C(\om,t) \frac{1}{N} \|g\|_{C^2} (1+1/\de).
\]
Thus choosing $\de =1/\sqrt N$, makes the decay rate of $\de$ and $1/(N\de)$ equal yielding \eqref{th1eq1}.

Finally, if $g$ is only Lipschitz, we approximate it by $\Phi_{\tilde \de}[g]$, so that the second derivative of
$\Phi_{\tilde \de}[g]$ is bounded by $\|g\|_{bLip}/\tilde \de$. Thus the rates of convergence for $g$ become of order
\[
[d\tilde \de + t(\ep(N)+\de d +1/(N\de \tilde \de))]\|g\|_{bLip}.
\]
Choosing  $\de =(tN)^{-1/3}$, $\tilde \de =t^{2/3}N^{-1/3}$
 makes the decay rate of all terms (apart from $\ep(N)$) equal
  yielding \eqref{th1eq1o} and completing the proof.
\end{proof}

Theorem \ref{th1} suggests that eventually the evolution will settle down near some stable equilibrium points of
dynamic systems \eqref{eqdefgenmeanfpool6brep}. Analysis of stability of these equilibria will be carried out elsewhere.
As was mentioned, for a particular case of evolutionary inspection games it was worked out in \cite{KoPaYa}.
Let us observe only that system \eqref{eqdefgenmeanfpool6brep} is quite specific in the sense that its singular points
can be easily identified. In fact, for a subset $I\subset \{1, \cdots , d\}$, let
\[
\Om_I =\{ x \in \Si_d: x_k =0 \Longleftrightarrow  k\in I, \text{and} \, R_j(x,b^*(x))=R_i(x,b^*(x)) \, \text{for} \,  i,j\notin I \}.
\]

\begin{theorem}
\label{th11}
A vector $x$ with non-negative coordinates is a singular point of \eqref{eqdefgenmeanfpool6brep},
that is, it satisfies the system of equations
\begin{equation}
\label{eqdefgenmeanfpoolsing1}
\sum_{i} \ka x_i x_j[R_j(x,b^*(x))-R_i(x,b^*(x))]=0, \quad j=1,...,d,
\end{equation}
if and only if $x\in \Om_I$ for some $I\subset \{1, \cdots , d\}$.
\end{theorem}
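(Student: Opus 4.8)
The plan is to reduce the seemingly coupled system \eqref{eqdefgenmeanfpoolsing1} to the familiar replicator-type condition by exploiting the simplex constraint. Throughout I would work with $x\in\Si_d$ (this is the relevant phase space, preserved by the dynamics since $\sum_j\dot x_j=0$ by antisymmetry, and the sets $\Om_I$ already live in $\Si_d$), and I abbreviate $R_i=R_i(x,b^*(x))$ for the fixed candidate point $x$, so that these are simply $d$ real numbers. The key step is to factor the $j$-th equation:
\[
\sum_i \ka x_i x_j [R_j - R_i] = \ka x_j \Big( R_j \sum_i x_i - \sum_i x_i R_i \Big) = \ka x_j (R_j - \bar R),
\]
where $\bar R = \sum_i x_i R_i$ is the mean payoff and I have used $\sum_i x_i = 1$. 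Thus \eqref{eqdefgenmeanfpoolsing1} is equivalent to the family of scalar conditions: for each $j$, either $x_j=0$ or $R_j=\bar R$. This factorization is the crux of the argument; once it is in place, both implications reduce to bookkeeping.

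For the forward (``only if'') direction I would set $I=\{k: x_k=0\}$, so that the equivalence $x_k=0\Leftrightarrow k\in I$ holds by definition. For every $j\notin I$ we have $x_j\neq 0$, whence the $j$-th condition forces $R_j=\bar R$. Since this common value $\bar R$ does not depend on $j$, it follows that $R_i=R_j$ for all $i,j\notin I$, which is exactly the second defining property of $\Om_I$. Hence $x\in\Om_I$.

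For the reverse (``if'') direction, suppose $x\in\Om_I$. For $j\in I$ the $j$-th equation holds trivially, because $x_j=0$. For $j\notin I$, the defining property of $\Om_I$ provides a common payoff value $R$ on the support, i.e. $R_i=R$ for all $i\notin I$; since $x$ is supported on the complement of $I$ and sums to $1$, the mean collapses to $\bar R=\sum_{i\notin I}x_i R = R$, so $R_j=R=\bar R$ and the $j$-th equation again holds. Thus $x$ is a singular point, which closes the equivalence.

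I do not anticipate a genuine obstacle. The single subtle point is that the factorization relies on the normalization $\sum_i x_i=1$ to collapse the double sum into $\ka x_j(R_j-\bar R)$; for this reason the statement is naturally read within $\Si_d$ rather than for arbitrary non-negative vectors (indeed an unnormalized vector such as $(2,0,\dots,0)$ solves \eqref{eqdefgenmeanfpoolsing1} yet lies in no $\Om_I$). Everything else is a direct case analysis according to whether each coordinate vanishes.
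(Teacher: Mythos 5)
Your proof is correct and follows essentially the same route as the paper: both dispose of the vanishing coordinates trivially and then show the payoffs must coincide on the support, the paper by dividing by $\ka x_j$ and subtracting the $j$th and $k$th equations to get $(x_1+\cdots+x_d)\,[R_j-R_k]=0$, you by factoring out the mean payoff $\bar R$ using $\sum_i x_i=1$. Your remark that the statement must be read within $\Si_d$ is a fair observation about the theorem's phrasing (the paper's subtraction argument needs only $\sum_i x_i>0$, but membership in $\Om_I$ does require $x\in\Si_d$, so your counterexample $(2,0,\dots,0)$ indeed shows the normalization cannot be dropped from the hypothesis).
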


\begin{proof}
Since for any $I$ such that $x_k=0$ for $k\in I$ the system  \eqref{eqdefgenmeanfpoolsing1}
reduces to the same system but with coordinates $k\notin I$, it is sufficient to show the result for
the empty $I$. In this situation, system  \eqref{eqdefgenmeanfpoolsing1} reduces to
\begin{equation}
\label{eqdefgenmeanfpoolsing2}
\sum_{i} x_i [R_j(x,b^*(x))-R_i(x,b^*(x))]=0, \quad j=1,...,d.
\end{equation}
Subtracting $j$th and $k$th equations of this system yields
\[
(x_1+\cdots + x_d)  [R_j(x,b^*(x))-R_k(x,b^*(x))]=0,
\]
and thus
\[
R_j(x,b^*(x))=R_k(x,b^*(x)),
\]
as required.
\end{proof}

So far we have deduced the dynamics arising from a certain Markov model of interaction.
As it is known, the internal
(not lying on the boundary of the simplex) singular points of the standard replicator dynamics of evolutionary game theory
correspond to the mixed-strategy Nash equilibria of the initial game with a fixed number of players
(in most examples just two-player game). Therefore, it is natural to ask whether a similar interpretation can be given to fixed points of
Theorem \ref{th11}. Because of the additional nonlinear mean-field dependence of $R$ on $x$
the interpretation of $x$ as mixed strategies is not at all clear.
However, consider explicitly the following game $\Ga_N$ of $N+1$ players (that was tacitly borne in mind
 when discussing dynamics).
 When the major player chooses the strategy $b$ and
each of $N$ small players chooses the state $i$, the major player receives the payoff
$B(x,b,N)$ and each player in the state $i$ receives $R_i(x,b)$, $i=1, \cdots, d$ (as above, with $x=n/N$ and
$n=(n_1, \cdots , n_d)$ the realized occupation numbers of all the states). Thus a strategy profile
of small players  in this game can be specified either by a sequence of $N$ numbers
(expressing the choice of the state by each agent), or more succinctly, by the resulting collection
of frequencies $x=n/N$.

As usual one defines a Nash equilibrium in $\Ga_N$ as a profile of strategies $(x_N,b_N)$ such that for any player
changing its choice unilaterally would not be beneficial, that is
\[
b_N =b_N^*(x_N)= argmax \, B(x_N,b,N)
\]
and for any $i,j\in \{1, \cdots , d\}$
\begin{equation}
\label{eqNashevolprin1}
R_j(x-e_i/N+e_j/N,b_N)\le R_i(x,b_N).
\end{equation}
A profile is an $\ep$-Nash if these inequalities hold up to an additive correction term not exceeding $\ep$.
It turns out that the singular points of \eqref{eqdefgenmeanfpool6brep} describe all approximate
Nash equilibria for $\Ga_N$ in the following precise sense:

\begin{theorem}
\label{th111}
Let $R(x,b)$ be Lipschitz continuous in $x$ uniformly $b$. Set
$\hat R=\sup_{i,b} \|R_i(.,b)\|_{Lip}$.
For $I\subset \{1, \cdots , d\}$, let
\[
\hat \Om_I =\{x\in \Om_I:  R_k(x,b^*(x)) \le R_i(x,b^*(x)) \, \text{for} \, k\in I, j\notin I\}.
\]
Then the following assertions hold.

(i) The limit points
of any sequence $x_N$ such that $(x_N, b^*(x_N))$ is a Nash equilibrium for $\Ga_N$ belong to $\hat \Om_I$ for some $I$.
In particular, if all $x_N$ are internal points of $\Si_d$, then any limiting point belongs to $\Om_{\emptyset}$.

(ii) For any $I$ and $x\in \Om_I$ there exists an $2\hat R d/N$-Nash equilibrium $(x_N,b_N^*(x_N))$ to $\Ga_N$
such  that the difference of any coordinates of $x_N$ and $x$ does not exceed $1/N$ in magnitude.
\end{theorem}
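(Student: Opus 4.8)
The plan is to handle the two parts by different methods: part (i) is a compactness-and-limit argument, while part (ii) is an explicit rounding construction. In both, the sole analytic ingredient is the uniform Lipschitz continuity of $R$ in $x$, which controls the two sources of $O(1/N)$ error that appear: a single agent's deviation shifts the distribution by $e_j/N-e_i/N$ (of $\ell^1$-size $2/N$), and replacing a point $x\in\Si_d$ by a lattice point $x_N\in\Z^d_+/N$ incurs an $\ell^1$-error of size $O(d/N)$.

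For part (i), given Nash equilibria $(x_N,b^*(x_N))$, I would extract a subsequence with $x_N\to x$ in the compact simplex $\Si_d$ and set $I=\{k:x_k=0\}$. For every active index $i$ (so $x_i>0$, hence $(x_N)_i>0$ for large $N$) and arbitrary $j$, inequality \eqref{eqNashevolprin1} reads $R_j(x_N-e_i/N+e_j/N,b^*(x_N))\le R_i(x_N,b^*(x_N))$. Absorbing the $O(1/N)$ shift with the Lipschitz bound and passing to the limit (using continuity of $b^*$ and of $R$) gives $R_j(x,b^*(x))\le R_i(x,b^*(x))$ for every active $i$ and every $j$. Choosing $i,j$ both active and exchanging their roles yields the equalities defining $\Om_I$, while choosing $j=k\in I$ yields the extra inequalities $R_k(x,b^*(x))\le R_i(x,b^*(x))$; together these place $x\in\hat\Om_I$. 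The internal case is $I=\emptyset$: all agents are active, every pair of payoffs coincides, and the limit lies in $\Om_\emptyset$.

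For part (ii), given $x\in\Om_I$ I would round it to $x_N=n/N\in\Si_d$ with $n\in\Z^d_+$, $\sum_i n_i=N$, $n_k=0$ for $k\in I$, and $|(x_N)_i-x_i|\le 1/N$ for every $i$; such an $x_N$ exists by rounding each active coordinate down and distributing the resulting deficit (at most $d-1$ units) among the active coordinates. Take $b_N=b_N^*(x_N)$, so the major player's condition holds by definition, and verify the small players' inequalities with $\ep=2\hat R d/N$ by splitting $R_j(x_N-e_i/N+e_j/N,b_N)-R_i(x_N,b_N)$ into the deviation shift (bounded by $\hat R$ times $2/N$) and the term $R_j(x_N,b_N)-R_i(x_N,b_N)$, which I compare with $R_j(x,b^*(x))-R_i(x,b^*(x))$ using the $\ell^1$ rounding error and the alignment $b_N=b_N^*(x_N)\to b^*(x)$. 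For active $j$ the latter difference vanishes by the $\Om_I$ equalities; for $j\in I$ it is non-positive, which is precisely the inequality distinguishing $\hat\Om_I$ inside $\Om_I$ and is what certifies a genuine $\ep$-Nash equilibrium.

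The main obstacle is the bookkeeping needed to keep the total error at exactly $2\hat R d/N$: one must align the finite-game best response $b_N=b_N^*(x_N)$ with the value $b^*(x)$ at which the singular-point conditions are phrased, and one must load the entire rounding deficit onto the active coordinates so that the accumulated $\ell^1$-error contributes the factor $d$ and no more. Controlling the boundary indices $j\in I$ (deviations toward currently unused strategies) is the conceptually delicate step, since this is the precise point at which the defining inequalities of $\hat\Om_I$, rather than the mere stationarity encoded in $\Om_I$, enter the estimate.
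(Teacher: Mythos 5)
Your argument is correct and follows essentially the same route as the paper's (much terser) proof: part (i) passes to the limit in the Nash inequalities \eqref{eqNashevolprin1} after absorbing the $O(1/N)$ deviation shift via the uniform Lipschitz bound $\hat R$, and part (ii) rounds $x$ to a point of $\Si_d\cap\Z_+^d/N$ and controls the resulting $\ell^1$-errors. You also correctly note that part (ii) really requires $x\in\hat\Om_I$ rather than merely $x\in\Om_I$ (to exclude profitable deviations toward the unused strategies $k\in I$), which is precisely the hypothesis the paper's own proof silently switches to.
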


\begin{proof}
(i) Let us consider a sequence of Nash equilibria  $(x_N,b^*(x_N))$ such that the coordinates of all $x_N$ in $I$ vanish.
By \eqref{eqNashevolprin1} and the definition of $\hat R$,
\begin{equation}
\label{eqNashevolprin2}
|R_j(x_N,b_N^*(x_N)) - R_i(x_N,b_N^*(x_N))| \le \frac{2}{N}\hat R
\end{equation}
for any $i,j \notin I$ and
\begin{equation}
\label{eqNashevolprin2a}
R_k(x_N,b_N^*(x_N)) \le R_i(x_N,b_N^*(x_N)) + \frac{2}{N}\hat R, \quad k\in I, i\notin I.
\end{equation}
Hence  $x\in \hat \Om_I$ for any limiting point $(x,b)$.

(ii) If $x\in \hat \Om_I$ one can construct its $1/N$-rational approximation,
namely a sequence $x_N\in \Si_d \cap \Z_+^d/N$ such that the difference
of any coordinates of $x_N$ and $x$ does not exceed $1/N$ in magnitude.
For any such $x_N$, the profile $(x_N,b^*(x_N))$ is an $2\hat R d/N$-Nash equilibrium for $\Ga_N$.
\end{proof}

Theorem \ref{th111} provides a game-theoretic interpretation of the fixed points of dynamics
\eqref{eqdefgenmeanfpool6brep}, which is independent of any myopic hypothesis used to
justify this dynamics.

Of course, the set of 'almost equilibria' $\Om$ may be empty or contain many points.
Thus one can naturally pose here the analog of the question which is well discussed
in the literature on the standard evolutionary dynamics (see \cite{BinSam} and references therein),
namely which equilibria can be chosen  in the long run (the analogs of stochastically
stable equilibria in the sense of \cite{FoYou}) if
small mutations are included in the evolution of the Markov approximation.

\subsection{Basic examples}
\label{secexampmatpressureevol}

In the standard setting of inspection games with a possibly tax-evading inspectee
(analyzed in detail in \cite{KoPaYa} under some particular assumptions),
the payoff $R$ looks as follows:
\begin{equation}
\label{eqdefinsppay}
R_j(x,b) =r+(1-p_j(x,b))r_j-p_j(x,b) f(r_j),
\end{equation}
where $r$ is the legal payoff of an inspectee, various $r_j$
denote various amounts of not declared profit, $j=1, \cdots , d$,
$p_j(x,b)$ is the probability for the illegal behavior of an inspectee
to be found when the inspector uses budget $b$ for searching operation
and $f(r_j)$ is the fine that the guilty inspectee has to pay when being discovered.

In the standard model of corruption 'with benevolent principal', see e. g. \cite{Aidt},
one sets the payoff of a possibly corrupted inspector (now taking the role
of a small player) as
\[
(1-p)(r+w)+p(w_0-f),
\]
where $r$ is now the bribe an inspector asks from a firm to agree not to publicize its profit
(and thus allowing her not to pay tax), $w$ is the wage of an inspector, $f$ the fine she has to pay
when the corruption is discovered and $p$ the probability of a corrupted behavior to be discovered by the
benevolent principal (say, governmental official). Finally it is assumed that when the corrupted behavior
is discovered the agent not only pays fine, but is also fired from the job
and has to accept a lower level activity with the reservation wage $w_0$. In our strategic model
we make $r$ to be the strategy of an inspector with possible levels $r_1, \cdots, r_d$
(the amount of bribes she is taking) and the probability $p$ of discovery to be dependent
on the effort (say, budget $b$) of the principal and the overall level of corruption $x$,
with fine too depending on the level of illegal behavior. This natural extension of the standard model
 leads to the payoff
\begin{equation}
\label{eqdefcorrpay}
R_j(x,b) =(1-p_j(x,b))(r_j+w)+p_j(x,b)(w_0- f(r_j)),
\end{equation}
 which is essentially identical to \eqref{eqdefinsppay}.

In the more general pressure and resistance games, the payoff
$R_j(x,b)$ has the following special features: $R$ increases
in $j$ and decreases in $b$. The dependence of $R$ and $b^*$ on $x$
is more subtle, as it may take into account social norms of various character.
In case of the pressure game with resistance and collaboration, the strategic
parameter $r$ of small players naturally decomposes into two coordinates $r=(r^1,r^2)$,
the first one reflecting the level of resistance and the second
the level of collaboration.
If the correlation between these activities are not taken into account
the payoff $R$ can be decomposed into the sum
of rewards $R=R^1_j(x,b)+R^2_j(x,b)$ with $R^1$
having the same features as $R$ above, but with
$R^2$ increasing both in $j$ and $b$.

As another set of examples let us look at the applications to the botnet defense
(for example, against the famous conflicker botnet), widely discussed in the contemporary
 literature, since botnets (zombie networks) are considered to pose the biggest
 threat to the international cyber-security, see e. g. review of the abundant
 bibliography in \cite{BenKaHo}. The comprehensive game theoretical framework of
 \cite{BenKaHo} (that extends several previous simplified models) models the group
of users subject to cybercriminal attack of botnet herders
as a differential game of two players, the group of cybercriminals and the group of defenders.
Our approach adds to this analysis the networking aspects by allowing the defenders to communicate
and eventually copy more beneficial strategies.
More concretely, our general model of inspection or corruption
 becomes almost directly applicable in this setting by the clever linguistic change of 'detected' to
'infected' and by considering the cybecriminal as the 'principal agent'!
 Namely, let $r_j$ (the index $j$ being taken from some discrete set here, though
 more advanced theory of the next sections allows for a continuous parameter $j$)
 denote the level of defense applied by an individual (computer owner)
against botnet herders (an analog of the parameter $\ga$ of  \cite{BenKaHo}), which can be
the level of antivirus programs installed or the measures envisaged to quickly report and repair
a problem once detected (or possibly a multidimensional parameter reflecting several defense measures).
Similarly to our previous models, let $p_j(x,b)$ denote the probability for a computer of being infected
given the level of defense measures $r_j$, the effort level $b$ of the herder (say, budget or time spent)
and the overall distribution $x$ of infected machines (this 'mean-field' parameter is crucial in the present setting,
since infection propagates as a kind of epidemic). Then, for a player with a strategy $j$,
the cost of being (inevitably) involved in the conflict can be naturally estimated by the formula
\begin{equation}
\label{eqdefcybersecpay}
R_j(x,b) =p_j(x,b)c+r_j,
\end{equation}
where $c$ is the cost (inevitable losses) of being infected (thus one should aim
at minimizing this $R_j$, rather then maximizing it, as in our previous models). Of course, one can extend the model
to various classes of customers (or various classes of computers) for which values of $c$ or $r_j$ may vary
and by taking into account more concrete mechanisms of virus spreading, as described e. g. in
\cite {LiLiSt} and \cite{LyWi}.


Yet another set of examples represent the models of terrorists' attacks and counterterrorism measures,
see e. g. \cite{ArSa05},  \cite{SaAr03}, \cite{SandLa88}, \cite{BrKi88}
for the general background on game -theoretic models of terrorism,
and \cite{FaAr12} for more recent developments. We again suggest here a natural extension to basic models to
the possibility of interacting large number of players and of various levels of attacks, the latter extension being in the line with
argument from \cite{ClYoGl07} advocating consideration of 'spectacular attacks' as part of a continuous scale
of attacks of various levels. In the literature, the counterterrorists' measures are usually
 decomposed into two groups, so called proactive (or preemptive),
like direct retaliation against the state-sponsor
and defensive (also referred to as deterrence), like strengthening security at an airport, with
the choice between the two considered as the main strategic parameter.
As stressed in \cite{RosSand04} the first group of action is 'characterized in
the literature as a pure public good, because a weakened terrorist group poses less of a
threat to all potential targets', but on the other hand, it 'may have a downside by creating more grievances in reaction
to heavy-handed tactics or unintended collateral damage' (because it means to
'bomb alleged terrorist assets, hold suspects without
charging them, assassinate suspected terrorists, curb civil freedoms, or impose retribution
on alleged sponsors'), which may result in the increase of terrorists' recruitment.
Thus, the model of \cite{RosSand04} includes the recruitment benefits of terrorists as a positively correlated function
of preemption efforts. A direct extension of the model of \cite{RosSand04} in the line indicated above
(large number of players and the levels of attacks) suggests to write down the reward of a terrorist, or a terrorist group,
considered as a representative of a large number of small players, using one of the
levels of attack $j=1, \cdots, d$ (in \cite{RosSand04} there are two levels, normal and spectacular only), to be
\begin{equation}
\label{eqdefterrpay}
R_j(x,b) =(1-p_j(x,b))r_j^{fail}(b)+p_j(x,b)(S_j+r_j^{succ}(b)),
\end{equation}
where $p_j(x,b)$ is the probability of a successful attack (which depends on the level $b$ of preemptive efforts
of the principal $b$ and the total
 distribution of terrorists playing different strategies), $S_j$ is the direct benefits in case of a success and
 $r_j^{fail}(b)$, $r_j^{succ}(b)$ are the recruitment benefits in the cases of failure or success respectively.
 The costs of principal are given by
 \[
 B(x,b)=\sum_j x_j \left[(1- p_j(x,b))b+p_j(b)(b+S_j)\right].
 \]
 It is seen directly that we are again in the same situation as described by \eqref{eqdefcorrpay}
 (up to constants and notations). The model extends naturally to account for possibility of the actions of two types,
 preemption and deterrence. Of importance should be its extension to several major players
  (for instance, USA and EU are considered in \cite{ArSa05}).

As was mentioned in introduction, there exists a large class of problems, where the state space of small players become
infinite. We shall pay most of our attention to the major particular case (possibly the mostly relevant
one for practical purposes) of a countable state space arising in the analysis of the models of evolutionary growth.
For this class of models the number $N$ of agents become variable (and usually growing in the result of the evolution)
and the major characteristics of the system becomes just the distribution $x=(x_1, x_2, \cdots )$ of the sizes
of the groups. The analysis of the evolution of these models is well -developed and has a long history,
see \cite{SimRoy}. Mathematically the analysis is similar to finite state spaces, though serious
 technical complications may arise. We develop the 'strategically enhanced model' in
Section \ref{secmodgrowthpres} analyzing such evolutions under the 'pressure' of strategically varying parameters
set by the principal.

\subsection{Compact state-space}
\label{secbestrespprin1ad}

Let us extend the analysis given above to the case of continuous state space of small
players, assuming it to be a compact convex subset $Z$, of a Euclidean space $\R^n$.
Let $\PC(Z)$ denote the set of probability laws on $Z$ equipped with its weak topology.
For each $N$ the state space of $N$ agents becomes $Z^N$. However, assuming agents to be indistinguishable,
the state space is better described as the set of equivalence classes of $Z^N$ with respect to all permutations
that can be naturally identified with the set $M_N$ of the normalized sums of $N$ Dirac measures
\[
\frac{1}{N}(\de_{x_1}+\cdots +\de_{x_N}).
\]
For $\x=(x_1, \cdots, x_N)$ let us use shorter notation $\de_{\x}$ for the sum $\de_{x_1}+\cdots +\de_{x_N}$.
Assume that continuous functions $R(x,\mu,b)$ on
$(Z\times \MC^+(Z) \times \R^r)$ and $B(\mu,b,N)$ on $(\MC^+(Z)\times \R^r \times \N)$ are given such that
$R(x_j, (\de_{x_1}+\cdots +\de_{x_N})/N, b)$
is the payoff for $x_j$ in the group $\x=(x_1, \cdots, x_N)$ given the level of efforts $b\in \R^r$ of the major player,
and $B((\de_{x_1}+\cdots +\de_{x_N})/N, b,N)$ is the payoff of the major player applying the effort level $b$ to the
the group $\x=(x_1, \cdots, x_N)$.
Assume again that $B$ is a smooth and strictly concave function of $b$, so that
\begin{equation}
\label{eqdefgenmeanfpool6b0cont}
b^*(\de_{\x}/N,N)=argmax_b \, B(\de_{\x}/N,b,N)
\end{equation}
is well defined and that the limit
\begin{equation}
\label{eqdefgenmeanfpool6b00cont}
\lim_{N\to\infty} b^*(\mu,N)=b^*(\mu)
\end{equation}
exists uniformly in $\mu \in \PC(Z)$.

The direct analog of the generator \eqref{eqdefgenmeanfpool2} with $b=b^*$
(describing the Markov chain produced by pairwise exchange of information)
to the continuous state-space is clearly
the operator

\[
L_{b^*,N}f (\de_{\x}/N)=\frac{\ka}{N} \sum_{(i,j)}[f(\de_{\x}/N-\de_{x_i}/N+\de_{x_j}/N)-f(\de_{\x}/N)]
\]
\begin{equation}
\label{eqdefgenmeanfpool2cont}
\times [R(x_j,\de_{\x}/N,b^*(\de_{\x}/N,N))-R(x_i,\de_{\x}/N,b^*(\de_{\x}/N,N))],
\end{equation}
where $\x=(x_1, \cdots, x_N)$ and the sum is over all pairs $(i,j)$ of indices ordered in such a way that
\[
R(x_j,\de_{\x}/N,b^*(\de_{\x}/N,N))>R(x_i,\de_{\x}/N,b^*(\de_{\x}/N,N))
\]
(the order is irrelevant if the corresponding values of $R$ coincide).

Let us denote by $X^*_{N}(t,\de_{\x}/N)$ the Markov chain on $M_N$ generated by \eqref{eqdefgenmeanfpool2cont}.

In order to see what happens with  generator \eqref{eqdefgenmeanfpool2cont} in the limit $N\to \infty$,
take a linear function $f$ on measures given by the integration, that is,
\begin{equation}
\label{eqlinfunmes}
f(\mu)=F_g(\mu)=\int g(x) \mu (dx).
\end{equation}
Then
\[
L_{b^*,N}F_g (\de_{\x}/N)=\frac{\ka}{N^2} \sum_{(i,j)}
\]
\[
\times
[R(x_j,\de_{\x}/N,b^*(\de_{\x}/N,N))-R(x_i,\de_{\x}/N,b^*(\de_{\x}/N,N))]
[g(x_j)-g(x_i)].
\]
Since the product of the square brackets is invariant under the change of the order of $(i,j)$,
this rewrites in a simpler form as
\begin{equation}
\label{eqdefgenmeanfpool2cont1}
L_{b^*,N}F_g (\de_{\x}/N)=\frac{\ka}{2N^2} \sum_{i,j=1}^N
[R(x_j,\de_{\x}/N,b^*(\de_{\x}/N,N))-R(x_i,\de_{\x}/N,b^*(\de_{\x}/N,N))](g(x_j)-g(x_i)],
\end{equation}
and consequently as
\[
 L_{b^*,N}F_g (\de_{\x}/N)
=\frac{\ka}{2}  \int \int [g(z_2)-g(z_1)]
\]
\[
\times [R(z_2,\de_{\x}/N,b^*(\de_{\x}/N,N))-R(z_1,\de_{\x}/N,b^*(\de_{\x}/N,N))]
\frac{1}{N}\de_{\x}(dz_1)\frac{1}{N}\de_{\x}(dz_2).
\]
Thus if $\de_{\x}/N \to \mu$ as $N\to \infty$ with any $\mu\in M(Z)$ this turns to
\begin{equation}
\label{eqdefgenmeanfpool2cont3}
 L_{b^*}F_g (\mu)
=\frac{\ka}{2}  \int_Z \int_Z [g(z_2)-g(z_1)][R(z_2,\mu,b^*(\mu))-R(z_1,\mu,b^*(\mu))]
\mu(dz_1)\mu(dz_2),
\end{equation}
or equivalently
\begin{equation}
\label{eqdefgenmeanfpool2cont4}
 L_{b^*}F_g (\mu)
=\ka \int_Z \int_Z g(z_2)[R(z_2,\mu,b^*(\mu))-R(z_1,\mu,b^*(\mu))]\mu(dz_1)\mu(dz_2).
\end{equation}

These calculations make the following result plausible. Unlike finite-state-space case, we
give two different convergence rates depending basically on whether weak or strong regularity
is assumed on the coefficients. We use the notations for the spaces of functions on measures introduced
in Appendices \ref{secnotspace} and \ref{secvarder}.
Assume for definiteness that $Z$ belongs to the cube $[0,K]^n$ of $\R^n$.

\begin{theorem}
\label{th2}
(i) Suppose the functions $R(x,\mu,b)$ and $b^*(\mu)$ are bounded weakly Lipschitz with respect to all their variables
with the bounds and Lipschitz constants bounded by some $\om$.
 Suppose the initial data $\de_{\x(N)}/N$ of the Markov chains  $X^*_{N}(t,\de_{\x(N)}/N)$
converge weakly to a certain $\mu\in \PC(Z)$, as $N\to \infty$. Then these
Markov chains  converge in distribution to the deterministic
evolution on $\PC(Z)$ solving the kinetic equation
\begin{equation}
\label{eqkineqga1}
\dot \mu_t (dz)=\ka \int_{y\in Z} [R(z,\mu_t,b^*(\mu_t))-R(y,\mu_t,b^*(\mu_t))]\mu_t(dy)\mu_t(dz),
\end{equation}
or equivalently in the weak form
\begin{equation}
\label{eqkineqga2}
\frac{d}{dt} \int g(z)\mu_t (dz)=\ka \int_{Z^2} g(z) [R(z,\mu_t,b^*(\mu_t))-R(y,\mu_t,b^*(\mu_t))]\mu_t(dy)\mu_t(dz).
\end{equation}
This equation is globally well-posed: for any initial $\mu\in \MC^+(Z)$
(in particular $\mu\in \PC(Z)$), the solution $\mu_t(\mu)$ exists and
belongs to $\MC^+(Z)$ (respectively, $\PC(Z)$) for all times $t$.

Moreover, if $g\in C^2_{weak}(\MC_1^+(Z))\cap C^{bLip}_{weak}(\MC_1^+(Z))$, the following rate of convergence is valid:
\[
 |\E g (X^*_{N}(t,\de_{\x(N)}/N))-g(\mu_t(\de_{\x(N)}/N))|
 \]
 \begin{equation}
\label{eqkineqga3}
 \le t C(\om,t) \left(\frac{1}{N^{1/(2+n)}}+\ep(N)\right) (\|g\|_{C^2_{weak}}+\|g\|_{weakLip}).
 \end{equation}
 If $g\in C^{bLip}_{weak}(\MC_1^+(Z))$, then
 \begin{equation}
\label{eqkineqga3a}
 |\E g (X^*_{N}(t,\de_{\x(N)}/N))-g(\mu_t(\de_{\x(N)}/N))|
 \le  C(\om,t) \left(\frac{t}{(tN)^{1/(2n+3)}}+t\ep(N)\right)\|g\|_{weakLip},
 \end{equation}
 \begin{equation}
\label{eqkineqga4}
|g(\mu_t(\de_{\x(N)}/N))-g(\mu_t(\mu))|\le C(\om,t) \|g\|_{bLip}|d_{bLip*}(\de_{\x(N)}/N,\mu),
\end{equation}
with constants $C(\om,t)$ uniformly bounded for bounded $\om$ and $t$.

(ii) Not assuming weak Lipschitz continuity, but assuming instead that $R$ and $b$ are strongly twice continuously differentiable,
one has the following rate of convergence
\begin{equation}
\label{eqkineqga5}
|\E g(X_N^*(t,\de_{\x(N)}/N)-g(\mu_t (\de_{\x(N)}/N))|\le tC(\om,t)\frac{1}{N} \|g\|_{C^2(\MC_1(Z))}.
\end{equation}
\end{theorem}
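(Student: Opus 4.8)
The plan is to transcribe the proof of Theorem \ref{th1} into the measure setting, replacing the simplex $\Si_d$ by $\MC^+(Z)$ and the partial derivatives by variational derivatives. For the well-posedness of \eqref{eqkineqga1} I would first note its replicator form: writing $\dot\mu_t(dz)=\ka[R(z,\mu_t,b^*(\mu_t))\mu_t(Z)-\int_Z R(y,\mu_t,b^*(\mu_t))\mu_t(dy)]\mu_t(dz)$, the net rate at $z$ is a bounded scalar multiplying $\mu_t(dz)$, so positivity is preserved, while integrating in $z$ gives $\frac{d}{dt}\mu_t(Z)=0$, so total mass is conserved; hence $\MC^+(Z)$ and $\PC(Z)$ are invariant and the equation falls under Lemma \ref{lemonwellposedinl1} (or Theorem 6.1 of \cite{Ko10}). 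The Lipschitz dependence \eqref{eqkineqga4} then follows by a Gronwall estimate in $d_{bLip*}$ applied to two solutions, exactly as \eqref{th1eq1a}. As in Theorem \ref{th1}, once the rate estimates \eqref{eqkineqga3}--\eqref{eqkineqga3a} together with \eqref{eqkineqga4} are in hand, convergence in distribution follows by approximating a general continuous $g$ on $\PC(Z)$ by functionals in $C^2_{weak}$.

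The core is the comparison of the discrete and limiting generators on smooth cylindrical functionals. For $f\in C^2_{weak}(\MC_1^+(Z))$ I would expand $f$ along the elementary jump $\mu\mapsto\mu-\de_{x_i}/N+\de_{x_j}/N$ in its first and second variational derivatives $\de f/\de\mu$ and $\de^2 f/\de\mu^2$. Summed against the weights $(\ka/N)[R(x_j,\cdot,b^*)-R(x_i,\cdot,b^*)]$ over the $O(N^2)$ ordered pairs, the first-order term reproduces the limiting operator $L_{b^*}$, obtained from \eqref{eqdefgenmeanfpool2cont3} by replacing $g$ with $\de f/\de\mu$; the second-order term carries an extra factor $1/N$ (each of the $O(N^2)$ pairs contributes $O(1/N^2)$ through $\de^2 f/\de\mu^2$, against the prefactor $\ka/N$) and is therefore bounded by $C(\om)\frac1N\|f\|_{C^2_{weak}}$. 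This settles part (ii) at once: when $R$ and $b$ are strongly $C^2$ the flow $U^t$ propagates the weak $C^2$-norm by Lemma \ref{lemonsenseBanach1}, so the semigroup comparison \eqref{eqcompsem2} gives $\|U_N^t g-U^t g\|\le tC(\om,t)N^{-1}\|g\|_{C^2(\MC_1(Z))}$, which is \eqref{eqkineqga5}.

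For part (i), with $R$, $b^*$ and possibly $g$ only weakly Lipschitz, I would mollify in the $n$-dimensional spatial variable $z\in Z\subset\R^n$, introducing smoothed coefficients $\Phi_\de[R]$ and the attendant semigroups $U^t_\de$, $U^t_{N,\de}$ and generators $L^\de_{b^*,N}$, $L^\de_{b^*}$, in complete parallel with the construction in Theorem \ref{th1}. The comparison \eqref{eqcompsem2} then splits the error into a mollification part $\|(L_{b^*,N}-L^\de_{b^*,N})U^s_\de g\|\le C(\om)(\ep(N)+\de)\|U^s_\de g\|_{weakLip}$ and a consistency part $\|(L^\de_{b^*,N}-L^\de_{b^*})U^s_\de g\|\le C(\om,t)N^{-1}\|U^s_\de g\|_{C^2_{weak}}$ coming from the previous paragraph. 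The new dimensional feature, absent in Theorem \ref{th1}, is that forcing the second variational derivative to be bounded by mollifying over $\R^n$ costs a power $\de^{-(n+1)}$ in the weak $C^2$-norm of $U^s_\de g$, the base-space dimension $n$ entering because this derivative must be controlled as a function on $Z\times Z$. Hence the consistency part is of order $(N\de^{n+1})^{-1}$; balancing it against the mollification error $\de$ gives $\de=N^{-1/(n+2)}$ and the exponent $1/(2+n)$ of \eqref{eqkineqga3}. When $g$ itself is only weakly Lipschitz and must also be mollified, at the same cost $\de^{-(n+1)}$, the consistency term becomes of order $(N\de^{2n+2})^{-1}$; balancing against $\de$ now gives $\de=N^{-1/(2n+3)}$ and the exponent $1/(2n+3)$ of \eqref{eqkineqga3a}.

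I expect the principal difficulty to be twofold: rigorously controlling the second-order variational Taylor remainder so that it is genuinely $O(1/N)$ uniformly over all empirical configurations $\x$, and the careful bookkeeping of how spatial mollification over the $n$-dimensional base propagates into the weak $C^2$-norm through Lemma \ref{lemonsenseBanach1}. This dimensional accounting --- inert in the finite-state Theorem \ref{th1}, where $d$ survives only as a harmless prefactor --- is exactly what converts the clean $N^{-1/2}$ and $N^{-1/3}$ rates there into the dimension-dependent $N^{-1/(n+2)}$ and $N^{-1/(2n+3)}$ rates here.
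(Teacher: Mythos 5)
Your overall architecture — well-posedness via the Lyapunov lemma, the variational-derivative expansion of $L_{b^*,N}$ with an $O(1/N)$ second-order remainder as in Lemma \ref{lembasas}, and part (ii) via \eqref{eqcompsem2} — matches the paper. The gap is in part (i), in the mechanism that is supposed to produce the dimension-dependent exponents. You propose to mollify only the coefficients in the spatial variable $z\in Z\subset\R^n$ and assert that this ``costs a power $\de^{-(n+1)}$'' in $\|U^s_{\de}g\|_{C^2_{weak}}$. Neither half of this works. First, mollification in $\R^m$ costs $1/\de$ per derivative order for every $m$; the dimension enters the standard estimates only as a multiplicative constant in the approximation error (cf.\ \eqref{eq2thMarkDeconEvolBack1} and \eqref{eq4thMarkDeconEvolBack1}), never as a power of $\de$, so the claimed $\de^{-(n+1)}$ has no basis. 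Second, and more fundamentally, what must be made twice differentiable is the dependence of the relevant functionals on the \emph{measure} $\mu$, not on $z$: the coefficient of the kinetic equation involves $R(z,\mu,b^*(\mu))$, which is only weakly Lipschitz in $\mu$, and smoothing $R$ in $z$ leaves that mean-field dependence untouched. Hence the mollified flow $U^t_{\de}$ still does not propagate $C^2_{weak}$-norms, and the consistency term $\|(L^{\de}_{b^*,N}-\La^{\de}_{b^*})U^s_{\de}g\|$ cannot be bounded by $N^{-1}\|U^s_{\de}g\|_{C^2_{weak}}$ with a finite right-hand side. There is no direct analogue of mollification on the infinite-dimensional space $\MC^+(Z)$, which is exactly the obstacle your plan does not address.

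The paper's proof supplies the missing device: a finite-dimensional projection $P_j^*\mu=\sum_l(\phi_j^l,\mu)\de_{x_j^l}$ built from a partition of unity of $(j+1)^n$ tent functions on $[0,K]^n$, with $d_{bLip*}(P_j^*\mu,\mu)\le 2^n nK/j$. Every weakly Lipschitz functional is replaced by the cylinder functional $F(P_j^*\mu)=f_j(\{(\phi_k^j,\mu)\})$ of $(j+1)^n$ scalar variables, and it is $f_j$ that is mollified, exactly as in Theorem \ref{th1}; composing a smooth function with finitely many linear functionals of $\mu$ genuinely smooths the $\mu$-dependence. The three competing errors are then $1/j$ (projection), $\de(j+1)^n$ (mollification, with the dimension as a factor, not a power), and $1/(N\de)$ (Taylor remainder); balancing forces $\de=j^{-(n+1)}$ and $j=N^{1/(n+2)}$, which is how the exponent $1/(2+n)$ — and, after additionally mollifying $g$, the exponent $1/(2n+3)$ — actually arises. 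Your balancing of $\de$ against $(N\de^{n+1})^{-1}$ reproduces the exponents numerically, but the $\de$ in that computation is not the mollification parameter of any step of a valid argument.
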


\begin{remark}
(i) A probabilistic proof of convergence is again well known (via the tightness of
the related martingale problems), see e.g. similar argument in Theorem 4.1. of  \cite{Ko06}, but it does not supply
the rates that are crucial for applications to optimal control. (ii) All estimates reduce to the estimates
of Theorem \ref{th2} by setting $n=0$, as expected (the dimension of a finite set is zero).
\end{remark}

\begin{proof}
 Well-posedness of \eqref{eqkineqga1} is a consequence of Lemma
\ref{lemonwellposedinl1} (with the barrier $L$ being identically $1$). Then
estimate \eqref{eqkineqga4} is the standard Lipschitz continuity of the solutions of ODE with Lipschitz coefficients
with respect to initial data.
Estimate \eqref{eqkineqga5} is obtained analogously to \eqref{eqkineqga3} using strong derivatives instead of weak ones,
but much simpler indeed, as the assumption of smoothness allows one to avoid any additional approximations.

Let us concentrate on \eqref{eqkineqga3} and \eqref{eqkineqga3a}.

The generator above is calculated only for linear functionals on measures.
To calculate it for arbitrary smooth functionals, one has to use the technique of variational derivatives
(recalled in Appendix).
Namely, for a smooth  $f$ the value of $L_{b^*,N}f(\mu)$ is given by Lemma \ref{lembasas}, that is, it coincides with
\[
L_{b^*}^{lim}f(\mu) =\ka \int_{Z^2} \frac{\de f(\mu)}{\de \mu (z_2)}[R(z_2,\mu,b^*(\mu))-R(z_1,\mu,b^*(\mu))]\mu(dz_1)\mu(dz_2)
\]
up to an additive correction of order $1/N$ depending on the second derivatives of $f$.

To deduce the convergence of processes from the convergence of generators on $f\in C^2_{weak}(\MC(Z))$
we follow the same strategy of approximation as above for Theorem \ref{th1}.
An additional ingredient is the approximation of a weakly Lipschitz function $F$ on $\MC(Z)$,
with the weak Lipschitz constant $\|F\|_{weakLip}$, by
finite-dimensional functionals (see Appendix I in \cite{Ko10}). Namely,
for $j\in \N$, let $\x^k_j=(K/j)k$, $k=(k_1, \cdots , k_n)$ with $k_l\in \{0, \cdots ,j\}$,
be the lattice of $(j+1)^n$ points in $[0,K]^n$ and $\phi_j^k$ be the collection
of $(j+1)^n$ functions on $\R^n$ given by
\[
\phi_k^j(x)=\prod_{i=1}^n \chi \left(\frac{j}{K}(x_i-k_i\frac{K}{N})\right),
\quad
\chi (z)=\left\{
\begin{aligned}
& 1-|z|, \, |z|\le 1 , \\
& 0, \quad |z| \ge 1.
\end{aligned}
\right.
\]

This choice of functions $\phi_k^j$ is not at all unique. It is just a concrete example of
non-negative functions satisfying the following conditions: for any $j$,
 $\sum_{k=(k_1, \cdots , k_n)} \phi_k^j=1$ and an arbitrary $x$ can belong to the supports of not more than $2^n$
of functions $\phi_k^j$; and
\begin{equation}
\label{eqpartunitspec}
|\phi_k^j(x)-\phi_k^j(y)|\le \frac{j}{K} |x-y|_1.
\end{equation}

Then one defines the finite-dimensional projections in the spaces of functions and measures on $Z\subset [0,K]^n$:
\[
P_j(f)=\sum_k f(x_j^k) \phi_j^k,
\quad P_j^*(\mu)=\sum_l (\phi_j^l, \mu)\de_{x_j^l},
\]
and the corresponding finite-dimensional projections on $C_{weak}(\MC^+_1(Z))$
\[
F(\mu) \mapsto F_j(\mu)=F(P_j^*(\mu)).
\]

The projections $P_j$ have the following properties:
\begin{equation}
\label{eqpartunitspec1}
\|P_j\| \le \|f \|, \quad \|P_jf-f\|\le 2^n n\frac{K}{j} \|f\|_{Lip}, \quad
\quad \|P_jf\|_{Lip}\le 2^{n+1} n \|f\|_{Lip}.
\end{equation}
The first one is obvious. The second one follows from the estimate
\[
\|P_jf-f\|= \sum_k |(f(\x^k_j)-f(x))\phi^k_j(x)|\le 2^d \max |(f(\x^k_j)-f(x)|,
\]
where $\max$ is over those $k$ that $x$ belongs to the support of $\phi^k_j$.
To prove the third inequality of \eqref{eqpartunitspec1}, take arbitrary $x,y$ with $|x-y|_1 \le nK/j$.
Then
\[
|P_jf(x)-P_jf(y)| =\sum_k [f(x_k^j)\phi_k^j(x)-f(x_k^j)\phi_k^j(y)].
\]
Here the sum is over not more than $2^{n+1}$ lattice points (maximum $2^n$ for either $x$ or $y$).
 Let $k_0$ be one of these points. Then
\[
|P_jf(x)-P_jf(y)|=|\sum_{k\neq k_0}[f(x_k^j)\phi_k^j(x)-f(x_k^j)\phi_k^j(y)]
+f(x_{k_0}^j)(\sum_{k\neq k_0}\phi_k^j(y))-\sum_{k\neq k_0}\phi_k^j(y))|
\]
\[
=|\sum_{k\neq k_0}(f(x_k^j)-f(x_{k_0}^j))(\phi_k^j(x)-\phi_k^j(y))|\le 2^{n+1}\|f\|_{Lip}\frac{K}{j} n \frac{j}{K} |x-y|_1,
\]
yielding the third estimate of \eqref{eqpartunitspec1}.
From \eqref{eqpartunitspec1} it follows that
\begin{equation}
\label{eqpartunitspec2}
d_{bLip*}(P^*_j\mu_1, P^*_j\mu_2) \le 2^{n+1}n\, d_{bLip*}(\mu_1, \mu_2),
\quad \|F_j\|_{weakLip} \le 2^{n+1}n \|F\|_{weakLip},
\end{equation}
\begin{equation}
\label{eqpartunitspec3}
d_{bLip*}(P^*_j\mu, \mu) \le 2^n n\, \frac{K}{j},
\quad \|F_j(\mu)-F(\mu)\| \le  2^n n\, \frac{K}{j} \|F\|_{weakLip}.
\end{equation}

 Now $F_j(\mu)$ can be written as some function $ F_j(\mu)= f_j(\{(\phi_k^j, \mu)\})$
 of $(j+1)^n$ variables $\u^j=\{u_k^j=(\phi_k^j, \mu)\}$ such that
 \[
 |f_j(\u^{j1})-f_j(\u^{j2})|
 \le 2^{n+1}n \|F\|_{weakLip} \|\sum (u^{j1}_k-u^{j2}_k) \de_{\x_k^j}\|_{bLip*}
\le 2^{n+1}n \|F\|_{weakLip} \|\u^{j1}-\u^{j2}\|_1.
\]
Thus $f$ is Lipschitz in $\u$ and we can apply the same smooth approximation as in the proof of
Theorem \ref{th1} above. Here the dimension becomes essential.
Namely, using literally the same argument as
in Theorem \ref{th1} we obtain
\[
 |\E g (X^*_{N}(t,\de_{\x(N)}/N))-g(\mu_t(\de_{\x(N)}/N))|
\]
\begin{equation}
\label{ekineqga3proof}
 \le t C(\om,t) \left(\frac{1}{j}+\ep(N)+\de (j+1)^n+\frac{1}{\de N}\right) (\|g\|_{C^2_{weak}}+\|g\|_{weakLip}).
 \end{equation}
 Choosing $j=N^{\be}$ and $\de=N^{-(1-\be)}$ with $\be=1/(2+n)$ makes the rates of decay of $1/j$, $\de j^n$ and $1/(N\de)$ equal
 yielding \eqref{eqkineqga3}.

 Finally, if $g$ is assumed to be only weakly Lipschitz, we approximate it by the smooth one, as above.
  Thus the rates of convergence for $g$ become of order
\[
[j^n\tilde \de + t(\ep(N)+1/j + \de j^n +1/(N\de \tilde \de))]\|g\|_{bLip}.
\]
Choosing
\[
j=(tN)^{1/(2n+3)}, \quad \de =j^{-(n+1)}, \quad \tilde \de =t\de =t j^{-(n+1)}
\]
makes the decay rate of all terms (apart from $\ep(N)$) equal
yielding \eqref{eqkineqga3a} and completing the proof.
\end{proof}

The extension of Theorem \ref{th11} to the present case is as follows.
\begin{theorem}
\label{th21}
A (non-negative) measure $\mu$ is a singular point of \eqref{eqkineqga1}, that is, it satisfies
\begin{equation}
\label{eq1th21}
\int_{y\in Z} [R(z,\mu,b^*(\mu))-R(y,\mu,b^*(\mu))]\mu(dy)\mu(dz)=0,
\end{equation}
if and only if the function $R(.,\mu,b^*(\mu))$ is constant on the support of $\mu$.
\end{theorem}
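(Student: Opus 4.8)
The plan is to read \eqref{eq1th21} correctly as an identity of \emph{signed measures} on $Z$: a singular point of \eqref{eqkineqga1} is by definition a $\mu$ for which the right-hand side measure $\dot\mu$ vanishes, i.e.
\[
\Bigl(\int_{y\in Z}[R(z,\mu,b^*(\mu))-R(y,\mu,b^*(\mu))]\,\mu(dy)\Bigr)\mu(dz)=0 .
\]
(Integrating this expression against the constant $1$ over $z$ always gives $0$ by the antisymmetry of the integrand under swapping $y$ and $z$ against the symmetric product $\mu\otimes\mu$, consistent with the conservation of total mass under \eqref{eqkineqga1}; so the content of the singular-point condition is genuinely the vanishing of the $\mu$-density above, not of a single scalar.) First I would abbreviate $\rho(z)=R(z,\mu,b^*(\mu))$ and $m=\mu(Z)$, so that the inner integral collapses to $\int_y[\rho(z)-\rho(y)]\,\mu(dy)=m\,\rho(z)-\lan\rho,\mu\ran$, where $\lan\rho,\mu\ran=\int\rho\,d\mu$. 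The displayed identity then reads $(m\,\rho(z)-\lan\rho,\mu\ran)\,\mu(dz)=0$, which holds precisely when $m\,\rho(z)=\lan\rho,\mu\ran$ for $\mu$-almost every $z$. This is the exact continuous counterpart of the step in Theorem \ref{th11} where subtracting two of the scalar equations forces the payoffs to coincide.

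From here the two implications fall out quickly. If $\mu=0$ the support is empty and both sides of the asserted equivalence hold trivially, so assume $m>0$. Dividing by $m$ shows the singular-point condition is equivalent to $\rho$ being $\mu$-almost everywhere equal to the constant $c:=\lan\rho,\mu\ran/m$. The converse direction is then immediate: if $\rho\equiv c$ on $\supp\mu$ then $\lan\rho,\mu\ran=cm$ and $m\,\rho(z)-\lan\rho,\mu\ran=0$ for every $z\in\supp\mu$, hence $\mu$-a.e., recovering \eqref{eq1th21}.

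The one point requiring a little care --- and the only place the hypotheses beyond measurability enter --- is upgrading ``$\rho=c$ $\mu$-a.e.'' to ``$\rho=c$ everywhere on $\supp\mu$'' for the forward implication. Here I would invoke the continuity of $R(\cdot,\mu,b^*(\mu))$ (part of the standing assumptions of Theorem \ref{th2}): the set $\{z:\rho(z)\neq c\}$ is open by continuity and has $\mu$-measure zero, so it cannot meet $\supp\mu$, since every point of the support has every open neighbourhood of positive $\mu$-measure. Thus $\rho$ agrees with $c$ on all of $\supp\mu$, which is exactly the statement that $R(\cdot,\mu,b^*(\mu))$ is constant on the support of $\mu$. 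I expect this continuity-driven passage from ``almost everywhere'' to ``on the support'' to be the only genuine (if minor) obstacle; the algebraic reduction through the total mass $m$ and the converse substitution are routine.
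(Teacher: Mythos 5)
Your proposal is correct and follows essentially the same route as the paper: the paper's entire proof is the observation that \eqref{eq1th21} rewrites as $R(z,\mu,b^*(\mu))\,\mu(dz)=\frac{(R,\mu)}{\|\mu\|}\,\mu(dz)$, which is exactly your reduction $m\,\rho(z)=\lan\rho,\mu\ran$ $\mu$-a.e. You merely make explicit what the paper compresses into ``and the result follows,'' namely the reading of the singular-point condition as an identity of measures, the $\mu=0$ edge case, and the continuity argument upgrading ``$\mu$-a.e.'' to ``everywhere on $\supp\mu$'' --- all of which are correct and indeed needed.
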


\begin{proof}
Denoting
\[
\|\mu\|=\int_Z \mu (dy), \quad (R, \mu) =\int_Z R(y,\mu,b^*(\mu)) \mu (dy),
\]
equation \eqref{eq1th21} rewrites as
\begin{equation}
\label{eq2th21}
R(z,\mu,b^*(\mu))\mu(dz)=\frac{(R,\mu)}{\|\mu\|}\mu (dz),
\end{equation}
and the result follows.
\end{proof}

The corresponding extension of Theorem \ref{th111} is now also straightforward.

\subsection{Optimal allocation and group interaction }
\label{secoptimalocandgroup}

So far our small players were indistinguishable. However, in many cases the small players can belong to different types.
These can be inspectees with various income brackets, the levels of danger or overflow of particular traffic path,
or the classes of computers susceptible to infection.
In this situation the problem for the principal becomes a policy problem, that is,
how to allocate efficiently her limited resources. Our theory extends to a setting with various types more-or-less straightforwardly.
We shall touch it briefly.

Let our players, apart from being distinguished by states $i\in \{1,\cdots, d\}$,
can be also classified by their types or classes $\al \in \{1, \cdots, \AC \}$.
The state space of the group becomes $\Z^d_+\times \Z^{\AC}_+$, the set of matrices $n=(n_{i\al})$, where $n_{i\al}$ is the number
 of players of type $\al$ in the state $i$ (for simplicity of notation we identify the state spaces of each type, which is not at all necessary).
 One can imagine several scenarios of communications between classes, two extreme cases being as follows:

 (C1) No-communication: the players of different classes can neither communicate nor observe the distribution of
  states in other classes, so that the interaction between types arises exclusively through the principal;

 (C2) Full communication: the players can change both their types and states via pairwise exchange of information, and
 can observe the total distribution of types and states.

 There are lots of intermediate cases, say, when types form a graph (or a network) with edges specifying the possible
  channels of information. Let us deal here only with cases (C1) and (C2). Starting with (C1), let
$N_{\al}$ denote the number of players in class $\al$ and $n_{\al}$ the vector $\{n_{i\al}\}, i=1, \cdots ,d$.
Let $x_{\al}=n_{\al}/N_{\al}$,
\[
x=(x_{i\al})= (n_{i\al}/N_{\al})\in (\Si_d) ^{\AC},
\]
and $b=(b_1, \cdots, b_{\AC})$ be the vector of the allocation of resources of the principal,
which may depend on $x$. Assuming that the principal uses the optimal policy
\begin{equation}
\label{eqdefgenmeanfcl0}
b^*(x)=argmax \, B(x,b)
\end{equation}
 arising from some concave (in the second variable)
payoff function $B$ on $(\Si_d) ^{\AC}\times \R^{\AC}$, the generator \eqref{eqdefgenmeanfpool2}
extends to

\[
L_{b*,N}f (x)=\sum_{\al=1}^{\AC}N_{\al}\ka_{\al} \sum_{i,j: R_j^{\al}(x_{\al},b^*(x))>R_i^{\al}(x_{\al},b^*(x))} x_{i\al} x_{j\al}
\]
\begin{equation}
\label{eqdefgenmeanfcl1}
\times
[R_j^{\al}(x_{\al},b^*(x))-R_i^{\al}(x_{\al},b^*(x))][f(x-e_i^{\al}/N_{\al}+e_j^{\al}/N_{\al})-f(x)],
\end{equation}
where $e_i^{\al}$ is now the standard basis in $\R^d\times \R^{\AC}$.
Passing to the limit as $N\to \infty$ under the assumption that
\[
\lim_{N\to \infty} N_{\al}/N=\om_{\al}
\]
with some constants $\om_{\al}$ we obtain a generalization of
\eqref{eqdefgenmeanfpool6brep} in the form
\begin{equation}
\label{eqdefgenmeanfcl2}
\dot x_{j\al}=\ka_{\al}\om_{\al} \sum_i x_{i\al} x_{j\al}[R^{\al}_j(x_{\al},b^*(x))-R^{\al}_i(x_{\al},b^*(x))],
\end{equation}
for $j=1,...,d$ and $\al=1, \cdots , \AC$,
coupled with \eqref{eqdefgenmeanfcl0}.

In case (C2), $x=(x_{i\al})\in \Si_{d\al}$, the generator becomes
\[
L_{b*,N}f (x)=\sum_{\al, \be=1}^{\AC}N\ka \sum_{i,j: R_j^{\al}(x,b^*(x))>R_i^{\be}(x,b^*(x))} x_{i\al} x_{j\al}
\]
\begin{equation}
\label{eqdefgenmeanfcl1a}
\times
[R_j^{\al}(x,b^*(x))-R_i^{\be}(x,b^*(x))][f(x-e_i^{\be}/N_{\al}+e_j^{\al}/N_{\al})-f(x)],
\end{equation}
and the limiting system of differential equations
\begin{equation}
\label{eqdefgenmeanfcl3}
\dot x_{j\al}=\ka \sum_{i, \be} x_{i\be} x_{j\al}[R^{\al}_j(x,b^*(x))-R^{\be}_i(x,b^*(x))].
\end{equation}

So far we have assumed that the propagation of strategies is due to pairwise interaction (say, exchange of opinions).
Let us now extend the model by allowing simultaneous interactions in groups of arbitrary size, with appropriate scaling
that makes the contribution of simultaneous group interaction comparable with the contribution of pairwise exchange.
For humans this $k$th order interaction seems to be even more realistic than in chemistry, where similar considerations
leads to the so-called mass-action law for the rates of chemical reactions, see \cite{GoShah11} for the latter.
Equations \eqref{eqdefgenmeanfpoolkthor4} below can be considered as a performance of the 'mass action law for agents'
playing against the principal.

Assume that any collection of $k$ small players $\{i_1, \cdots i_k\}$, with $k$ not exceeding certain level $K$, can
be formed randomly with uniform distribution (any collection of $k$ players is equally likely) and exchange
opinions with the effect that all members of the group will accept the strategy $j$
of the member with the highest payoff, so that $R_j(x,b)=\max_l R_{i_l}(x,b)$, with some rates
$\Pi_I=\Pi (R_{i_1}, \cdots , R_{i_k})$,
which are symmetric functions of their arguments that vanish whenever all $R_{i_l}(x,b)$ are equal.
If there are several members of the group with the same payoff, the choice can be fixed arbitrary, say by
choosing the member with the highest index $i$. For simplicity (to shorten the formulas below) 
let us assume that only the players from different states  
can interact. Therefore, instead of a Markov chain with generator \eqref{eqdefgenmeanfpool2}, 
we obtain the chain with the generator
\begin{equation}
\label{eqdefgenmeanfpoolkthor1}
L_{b,N}f (n)=N\ka \sum_{k=2}^K \sum_{I=\{i_1, \cdots , i_k\}} \prod_{l=1}^k x_{i_l} \Pi_{I}[f(x+ke_{j(I)}/N-\sum_{i\in I}e_i/N)-f(x)],
\end{equation}
where $I$ are now all possible subsets of $\{1, \cdots, d\}$ of size $k$. 

Assuming again that
$\lim_{N\to\infty} b(x,N)=b(x)$ exists and that $f\in C^1(\Si_d)$, we find now,
analogously to the calculations with \eqref{eqdefgenmeanfpool2} (that is by expanding $f$ in Taylor series), that
\[
\lim_{N \to \infty,\,  n/N \to x} L_{b,N}f (n/N)=\La_{b} f(x),
\]
where
\begin{equation}
\label{eqdefgenmeanfpoolkthor2}
\La_b f (x)=\ka \sum_{k=2}^K \sum_{I=\{i_1, \cdots , i_k\}}
 \Pi_{I}\left[k\frac{\pa f}{\pa x_{j(I)}}-\sum_{i\in I}\frac{\pa f}{\pa x_i}\right](x)\prod_{l=1}^k x_{i_l},
\end{equation}
or equivalently
\begin{equation}
\label{eqdefgenmeanfpoolkthor3}
\La_{b} f (x) = \sum_{k=2}^K \ka \sum_{m=1}^d  \frac{\pa f}{\pa x_m}(x) x_m
\left[
k \sum'_{I=\{i_1, \cdots ,i_{k-1}\}} \Pi_{mI}\prod_{i\in I} x_i
-\sum_{I=\{i_1, \cdots , i_{k-1}\}:m\notin I}\Pi_{mI}\prod_{i\in I} x_i
\right],
\end{equation}
where
$\sum'$ denotes the sum over subsets $I=(i_1 \cdots i_{k-1})$ such that for each $l$ either 
$R_{i_l}< R_m$ or $R_{i_l}= R_m$ and $i_l < m$.
The corresponding system of ODEs becomes
 \begin{equation}
\label{eqdefgenmeanfpoolkthor4}
\dot x_m = \sum_{k=2}^K \ka x_m
\left[
k \sum'_{I=\{i_1 \cdots i_{k-1}\}} \Pi_{mI}\prod_{i\in I} x_i
-\sum_{I=\{i_1 \cdots i_{k-1}\}: m\notin I}\Pi_{mI}
\prod_{i\in I} x_i
\right],
\end{equation}
with $m=1, \cdots , d$.

The analog of Theorem \ref{th1} can now be easily given with the limiting deterministic dynamics being \eqref{eqdefgenmeanfpoolkthor4}.

It would be of course desirable to get some empirical data on the transition probabilities for $k$th order interactions.

\section{Introducing a forward-looking principal}
\label{secforwardprin1}

\subsection{Discrete time}
\label{secondiscrtimeforwardlooking}

Here we start exploiting another setting for the major player behavior. We shall assume that
changing strategies bears some costs, so that instantaneous adjustments of policies become unfeasible
and that the major player has some planning horizon with both running and (in case of a finite horizon) terminal costs.
For instance, running costs can reflect real spending and terminal cost some global objective,
like reducing the overall crime level by a specified amount. This setting will lead us to the class of problem that can be
called Markov decision (or control) processes (for the principal) on the evolutionary background (of
permanently varying profiles of small players).

We shall confine ourselves to the case of a finite-state-space of small players,
so that the state space of the group is given by vectors $x=(n_1, \cdots, n_d)/N$
 from the lattice $\Z^d_+/N$ (see Subsection \ref{secbestrespprin1}).
The extension to an arbitrary compact state-space is straightforward via Theorem
\ref{th2}.

Starting with a discrete time case,
we denote by $X_N(t,x,b)$ the Markov chain generated by \eqref{eqdefgenmeanfpool2}
 with a fixed $b$, that is by the operator
\begin{equation}
\label{eqdefgenmeanfpool2mod}
L_{b,N}f (x)=N\sum_{i,j: R_j(x,b)>R_i(x,b)} \ka x_i x_j
[R_j(x,b)-R_i(x,b)]\left[f\left(x-\frac{e_i}{N}+\frac{e_j}{N}\right)-f(x)\right],
\end{equation}
and starting in $x\in \Z^d_+/N$ at the initial time $t=0$.
We assume that the principal is updating her strategy
 in discrete times $\{k\tau\}$, $k=0,1, \cdots ..., n-1$, with some fixed $\tau>0$, $n\in \N$ aiming
 at finding a strategy $\pi$ maximizing the reward
\begin{equation}
\label{eqMarkdecevolback1}
V_n^{\pi,N}(x(N)) =\E_{N,x(N)}\left[\tau B(x_0,b_0)+\cdots +\tau B(x_{n-1},b_{n-1})+V_0(x_n)\right],
\end{equation}
where $B$ and $V_0$ are given functions (the running and the terminal payoff), $x_0=x(N)\in \Z^d_+/N$ also given,
\[
x_k=X_N(\tau,x_{k-1},b_{k-1}), \quad k=1,2, \cdots ,
\]
and $b_k=b_k(x_k)$ are specified by the strategy $\pi$ as some functions depending on the current state $x=x_k$
($\E_{N,x(N)}$ denotes the expectation specified by such process).
By the basic dynamic programming (see again \cite{HeLa}) the maximal rewards
$V_n^N(x(N))=\sup_{\pi} V_n^{\pi,N}(x(N))$
at different times $k$ are linked by the optimality equation
$V_k^N=S[N] V_{k-1}^N$,
where  the Shapley operator $S[N]$ (sometimes referred to as the Bellman operator)
is defined by the equation
\begin{equation}
\label{eqMarkdecevolback1a}
S[N]V(x)=\sup_b \left[\tau B(x,b)+\E V(X_N(\tau,x,b))\right],
\end{equation}
so that $V_n$
can be obtained by the $n$th iteration of the Shapley operator:
\begin{equation}
\label{eqMarkdecevolback1b}
V_n^N=S[N] V_{n-1}^N=S^n[N]V_0.
\end{equation}

Alternatively, in the infinite-horizon version, the principal can be interested in maximizing the discounted sum
\begin{equation}
\label{eqMarkdecevolback2}
V^{\pi,N}(x(N)) =\E_{N,x(N)}\sum_{k=0}^{\infty} \be^k B(x_k,b_k),
\end{equation}
with a $\be \in (0,1)$,
or any other criterion on the infinite horizon path.
Recall also that we assume $b$ to belong to a certain convex compact subset of a Euclidean space.

We are again interested in the law of large numbers limit $N\to \infty$, where we expect the limiting
problem for the principal to be the maximization of the reward
\begin{equation}
\label{eqMarkdecevolback3}
V_n^{\pi}(x_0) =\tau B(x_0,b_0)+\cdots +\tau B(x_{n-1},b_{n-1})+V_0(x_n),
\end{equation}
or respectively
\begin{equation}
\label{eqMarkdecevolback4}
V^{\pi}(x) =\sum_{k=0}^{\infty} \be^k \tau B(x_k,b_k)
\end{equation}
in the discounted infinite-horizon problem,
where
\begin{equation}
\label{eqMarkdecevolback5}
x_0=\lim_{\N\to \infty} x(N)
\end{equation}
(which is supposed to exist) and
\begin{equation}
\label{eqMarkdecevolback6}
x_k=X(\tau, x_{k-1}, b_{k-1}), \quad k=1,2, \cdots,
\end{equation}
with $X(t,x,b)$ denoting the solution to the characteristic system (or kinetic equations)
\begin{equation}
\label{eqMarkdecevolback7}
\dot x_j=\sum_{i} \ka x_i x_j[R_j(x,b)-R_i(x,b)], \quad j=1,...,d,
\end{equation}
with the initial condition $x$ at time $t=0$. Again by dynamic programming, the maximal reward in this problem
$V_n(x)=\sup_{\pi} V_n^{\pi} (x)$
is obtained by the iterations of the corresponding Shapley operator, $V_n =S^n V_0$, with
\begin{equation}
\label{eqMarkdecevolback8}
SV(x)=\sup_b \left[\tau B(x,b)+V(X(\tau,x,b))\right].
\end{equation}

Especially for the application to the continuous time models it is important to have estimates
of convergence uniform in $n=t/\tau$ for bounded total time $t=n\tau$.

\begin{theorem}
\label{thMarkDeconEvolBack1}
(i) Assume the functions $R_i(x,b)$ and $B(x,b)$ belong to $C_{bLip}(\Si_d)$ as the functions of the first variable
with the norm uniformly bounded with respect to the second variable.
 Assume also \eqref{eqMarkdecevolback5} holds.
Then, for any Lipschitz function $V_0$ on $\Si_d$, $\tau>0$ and $n\in N$,
\begin{equation}
\label{eq1thMarkDeconEvolBack1}
|V_n^N(x(N))-V_n(x)|\le C(t)(|x(N)-x|+t^{2/3}(n/N)^{1/3}) \|V_0\|_{bLip},
\end{equation}
where $t=n\tau$ is the total time. In particular, for $n=N^{\om}$ with $\om \in (0,1)$, the last term on the r.h.s. of
\eqref{eq1thMarkDeconEvolBack1} becomes of order $N^{-(1-\om)/3}$.

(ii) If there exists a Lipshitz continuous optimal policy $\pi=\{b_k(x)\}$, $k=1, \cdots , n$,
 for the limiting optimization problem,
then $\pi$ is approximately optimal for the $N$-agent problem, in the sense that for any $\ep>0$ there exists
$N_0$ such that, for all $N>N_0$,
\[
|V_n^N(x(N)) -V_n^{N,\pi}(x(N))|\le \ep.
\]
\end{theorem}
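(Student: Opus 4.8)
The plan is to compare the $n$-fold iterates $V_n^N=S^n[N]V_0$ and $V_n=S^nV_0$ of the two Shapley operators \eqref{eqMarkdecevolback1a} and \eqref{eqMarkdecevolback8} by a telescoping argument, the whole difficulty being concentrated in a one-step discrepancy estimate together with a uniform control of the Lipschitz norms of the limiting value functions. Three ingredients are needed. First, both $S[N]$ and $S$ are non-expansive in the sup-norm: since $|\sup_b\ph(b)-\sup_b\psi(b)|\le\sup_b|\ph(b)-\psi(b)|$ and $\E$ is an averaging, one gets $\|S[N]V-S[N]W\|\le\|V-W\|$, and likewise for $S$. Second, because the running-cost term $\tau B(x,b)$ occurs identically in both operators it cancels in the one-step difference, leaving
\[
\|S[N]V-SV\|\le\sup_{x,b}\bigl|\E V(X_N(\tau,x,b))-V(X(\tau,x,b))\bigr|,
\]
which, applying Theorem \ref{th1} with the constant strategy $b$ (so that $\ep(N)=0$) and estimate \eqref{th1eq1o}, and using that the constant there is uniform in $b$ through the bound $\om$ on $R$, is at most $C(\om,\tau)\,d\,\tau^{2/3}N^{-1/3}\|V\|_{bLip}$. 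Third, I would show that the limiting value functions $V_k=S^kV_0$ have Lipschitz norms bounded for $k\tau\le t$ by $C(t)\|V_0\|_{bLip}$, the fixed Lipschitz constant of $B$ being absorbed into $C(t)$.

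Granting these, I would telescope
\[
V_n^N-V_n=\sum_{k=0}^{n-1}\Bigl[S^{n-1-k}[N]\bigl(S[N]V_k\bigr)-S^{n-1-k}[N]\bigl(SV_k\bigr)\Bigr],
\]
and apply the sup-norm non-expansivity of $S^{n-1-k}[N]$ to each summand, obtaining $\|V_n^N-V_n\|\le\sum_{k=0}^{n-1}\|(S[N]-S)V_k\|$. Inserting the one-step bound and the uniform Lipschitz bound on $V_k$ gives a total of order $n\,\tau^{2/3}N^{-1/3}$; writing $\tau=t/n$ turns $n\tau^{2/3}$ into $n^{1/3}t^{2/3}$, i.e.\ exactly $t^{2/3}(n/N)^{1/3}$, which is the stochastic term in \eqref{eq1thMarkDeconEvolBack1}. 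The remaining initial-data term is produced by splitting $V_n^N(x(N))-V_n(x)=[V_n^N(x(N))-V_n(x(N))]+[V_n(x(N))-V_n(x)]$ and bounding the second bracket by $\|V_n\|_{Lip}|x(N)-x|\le C(t)\|V_0\|_{bLip}|x(N)-x|$ via the same uniform Lipschitz bound.

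The step I expect to be the main obstacle is this uniform control of $\|V_k\|_{Lip}$. It rests on showing that $S$ propagates Lipschitz continuity with only exponential growth. Using the Lipschitz dependence of the flow on its initial point (the finite-dimensional analog of \eqref{th1eq1a}, namely $|X(\tau,x,b)-X(\tau,y,b)|\le e^{C\tau}|x-y|$ with $C=C(\om)$ obtained from Gronwall applied to \eqref{eqMarkdecevolback7}, uniformly in $b$), one gets
\[
\|SV\|_{Lip}\le\tau\|B\|_{Lip}+e^{C\tau}\|V\|_{Lip},
\]
and iterating this recursion yields $\|V_k\|_{Lip}\le e^{Ck\tau}\|V_0\|_{Lip}+\tau\|B\|_{Lip}\sum_{j<k}e^{Cj\tau}\le C(t)\|V_0\|_{bLip}$ for $k\tau\le t$. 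The point requiring care is that the supremum over $b$ in the definition of $S$ does not spoil these estimates, which holds because all the bounds above are uniform in $b$.

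For part (ii) I would argue by comparing the fixed-policy rewards. For the Lipschitz policy $\pi=\{b_k(\cdot)\}$ the reward $V_n^{N,\pi}$ obeys the \emph{linear} policy-evaluation recursion driven by the operators $T_k[N]V(x)=\tau B(x,b_k(x))+\E V(X_N(\tau,x,b_k(x)))$, and $V_n^\pi$ is driven by their deterministic counterparts. These operators are again non-expansive in the sup-norm (trivially, now without any supremum over $b$), and they preserve Lipschitz continuity precisely because each $b_k$ is Lipschitz, so that $x\mapsto X(\tau,x,b_k(x))$ is Lipschitz; hence the identical telescoping gives $V_n^{N,\pi}(x(N))\to V_n^\pi(x)$ as $N\to\infty$, with the same $t^{2/3}(n/N)^{1/3}$ rate. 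Since $\pi$ is optimal for the limiting problem, $V_n^\pi(x)=V_n(x)$, while part (i) gives $V_n^N(x(N))\to V_n(x)$. As $V_n^{N,\pi}(x(N))\le V_n^N(x(N))$ always holds, the nonnegative gap $V_n^N(x(N))-V_n^{N,\pi}(x(N))$ converges to $V_n(x)-V_n^\pi(x)=0$, which is exactly the claimed approximate optimality: for every $\ep>0$ there is $N_0$ beyond which the gap is below $\ep$.
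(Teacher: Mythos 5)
Your proposal is correct and follows essentially the same route as the paper: the uniform Lipschitz bound on $V_k=S^kV_0$ via the one-step recursion $\|SV\|_{Lip}\le\tau\|B\|_{Lip}+e^{C\tau}\|V\|_{Lip}$, the telescoping decomposition combined with sup-norm non-expansivity of the Shapley operators, and the one-step discrepancy controlled by Theorem \ref{th1} (with the supremum over $b$ handled by the elementary inequality for differences of suprema) are exactly the ingredients of the paper's argument, and your treatment of part (ii) via policy-evaluation operators is the detailed version of the paper's one-line remark. The only additions you make are cosmetic: the explicit splitting off of the $|x(N)-x|$ term and the observation that $\ep(N)=0$ for a constant strategy.
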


\begin{proof}
(i) Assume $V_0$ is Lipschitz with
some Lipschitz constant $\ka$. This implies that all functions $V_k(x)$ are uniformly Lipschitz continuous. In fact,
 \[
| SV(x_1)-SV(x_2)| \le \sup_b |\tau B(x_1,b)+V(X(\tau,x_1,b))-\tau B(x_2,b)-V(X(\tau,x_2,b))|
\]
\[
\le \ka_B \tau |x_1-x_2| + \ka e^{\tau F} |x_1-x_2|,
\]
where $\ka_B$ is the Lipschitz constant for $B$ and $F$ is the Lipschitz constant of the function
on the r.h.s. of \eqref{eqMarkdecevolback7} (as a functions of $x$). Thus the Lipschitz constant of $V_k=S^kV_0$
is bounded by a constant $C(t)$.
Notice also that, since the function $B$ is uniformly bounded, all $V_k^N$ and $V_k$ are uniformly bounded, say by some constant $v$.

 Next we can write
\[
S^n[N]V_0-S^nV_0=\sum_{j=0}^{n-1} S^j[N](S[N]-S)S^{n-(j-1)}[N]V_0.
\]
Consequently,
\[
\|S^n[N]V_0-S^nV_0\|\le n \sup_{k=1, \cdot ,n} \|(S[N]-S)S^k V_0\|.
\]
Since the uniform estimate of the difference of two functions of $b$ implies the same estimate for the
difference of the maxima, it follows from Theorem \ref{th1} that
\[
\|S^n[N]V_0-S^nV_0\|\le n C(t) \tau ^{2/3}N^{-1/3} \|V_0\|_{bLip}.
\]
yielding \eqref{eq1thMarkDeconEvolBack1}.

(ii) One shows as above that for any Lipschitz continuous policy $\pi$,
the corresponding value functions $V^{\pi,N}$ converge. Combined with (i),
this yields Statement (ii).
\end{proof}

\begin{remark} For a compact state space being a subset of $\R^n$ one would get for the last term
of the r.h.s. of \eqref{eq1thMarkDeconEvolBack1} the decay estimate of order $t^{1-1/(2n+3)} (n/N)^{1/(2n+3)}$.
\end{remark}

Since the tails of series \eqref{eqMarkdecevolback4} and \eqref{eqMarkdecevolback2} tend to zero uniformly,
the following fact is a consequence of Theorem \ref{thMarkDeconEvolBack1}.

\begin{theorem}
\label{thMarkDeconEvolBack2}
Under the assumptions of Theorem \ref{thMarkDeconEvolBack1} the discounted optimal rewards
\eqref{eqMarkdecevolback2} converge, as $N\to \infty$, to the discounted reward \eqref{eqMarkdecevolback4}.
\end{theorem}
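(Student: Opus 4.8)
The plan is to deduce the infinite-horizon statement from the finite-horizon Theorem \ref{thMarkDeconEvolBack1} by a truncation argument, exploiting that the discount factor $\be\in(0,1)$ makes the tails of the two series vanish uniformly. Write $V^N(x(N))=\sup_\pi V^{\pi,N}(x(N))$ and $V(x)=\sup_\pi V^\pi(x)$ for the optimal rewards of \eqref{eqMarkdecevolback2} and \eqref{eqMarkdecevolback4}, and let $W_n^N$, $W_n$ denote the optimal rewards of their $n$-horizon truncations (the sums over $0\le k\le n-1$). First I would record the tail bound: since $B$ is bounded, say $|B(x,b)|\le\|B\|$, the contribution of the indices $k\ge n$ to either series is at most a fixed multiple of $\|B\|\be^n/(1-\be)$ in absolute value, and this bound is uniform in $N$, in the initial point and, crucially, in the policy $\pi$. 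Uniformity in $\pi$ lets one pass to the suprema, so $|V^N(x(N))-W_n^N(x(N))|$ and $|V(x)-W_n(x)|$ are each at most this multiple of $\|B\|\be^n/(1-\be)$.

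Next I would fix the horizon $n$ and let $N\to\infty$. The truncated optimal rewards $W_n^N$ and $W_n$ are the $n$-th iterates, applied to the zero function, of the discounted Shapley operators $S_\be[N]V(x)=\sup_b[\tau B(x,b)+\be\,\E V(X_N(\tau,x,b))]$ and $S_\be V(x)=\sup_b[\tau B(x,b)+\be V(X(\tau,x,b))]$, which differ from \eqref{eqMarkdecevolback8} only by the factor $\be\le1$. The Lipschitz estimate and the telescoping identity used in the proof of Theorem \ref{thMarkDeconEvolBack1} carry over verbatim---the factors $\be\le 1$ only improve every bound---so the iterates $S_\be^k 0$ are uniformly Lipschitz on $\Si_d$ with constant controlled by $C(n\tau)$, the regularity now being supplied by the running cost $B$ rather than by a terminal payoff (here the terminal cost is $0$). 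Applying the one-step weak estimate \eqref{th1eq1o} of Theorem \ref{th1} to each iterate, with $b$ held fixed (so that $\ep(N)=0$), and summing the telescope yields, for each fixed $n$,
$$|W_n^N(x(N))-W_n(x)|\le C(n\tau)\bigl(|x(N)-x|+(n\tau)^{2/3}(n/N)^{1/3}\bigr),$$
which tends to $0$ as $N\to\infty$ by \eqref{eqMarkdecevolback5}.

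Finally I would combine the two steps by a three-$\ep$ argument. Given $\ep>0$, first choose $n$ so large that the tail bound is below $\ep/3$; this fixes $n$. Then choose $N_0$ so that $|W_n^N(x(N))-W_n(x)|<\ep/3$ for all $N>N_0$. The triangle inequality $|V^N(x(N))-V(x)|\le|V^N-W_n^N|+|W_n^N-W_n|+|W_n-V|$ then gives $|V^N(x(N))-V(x)|<\ep$ for $N>N_0$, which is the claimed convergence.

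I expect the main (indeed the only) delicate point to be the order in which the two limits are taken. The finite-horizon constant $C(n\tau)$ grows with the horizon, so one cannot send $n$ and $N$ to infinity simultaneously. The argument succeeds precisely because discounting provides a tail bound that is uniform in $N$: this permits freezing $n$ first, on the strength of that uniform bound, and only afterwards letting $N\to\infty$ with $n$---hence $t=n\tau$, hence $C(n\tau)$---held fixed. Beyond arranging the limits in this order, everything is routine given Theorem \ref{thMarkDeconEvolBack1}.
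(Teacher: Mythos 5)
Your proposal is correct and follows essentially the same route as the paper, whose entire proof is the one-line observation that the tails of the series \eqref{eqMarkdecevolback2} and \eqref{eqMarkdecevolback4} tend to zero uniformly (in $N$ and in the policy), so that the claim reduces to the finite-horizon convergence of Theorem \ref{thMarkDeconEvolBack1}. Your write-up merely makes explicit the details the paper leaves implicit --- the uniformity of the tail bound over policies, the harmlessness of the factor $\be\le 1$ in the discounted Shapley operator, and the order in which the limits $n\to\infty$ and $N\to\infty$ must be taken.
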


Analyzing long time behavior of the optimal dynamics given by  Theorem \ref{thMarkDeconEvolBack1}
leads one naturally to the analysis of the fixed points of equation \eqref{eqMarkdecevolback7}
and their turnpike properties. Namely, let $X[b]$ denote the set of fixed points of
\eqref{eqMarkdecevolback7} for  given $b$. If
\begin{equation}
\label{eqMarkEvolturnp}
\sup B(x,b) =\max_b \max_{x\in X[b]}B(X[b],b),
\end{equation}
the points of maximum on the r. h. s. can be expected to serve as turnpikes (introduced in economics by \cite{DoSaSo},
see recent reviews e. g. in \cite{Zas06} and \cite{KoWei12})
for long time behavior of optimal problems arising from the limiting evolution  of \eqref{eqMarkdecevolback7}.
How this fact is recast in terms of the Markov decision process with $N$ players is an interesting
problem for what one can characterize as the turnpike theory for Markov control on evolutionary background.
We shall not touch it here.

\subsection{Continuous time}

Here we initiate the analysis of the optimization problem for a forward-looking principal
in continuous time choosing the most transparent deterministic evolution of the principal.
Namely, let the efforts (budget) $b$ of the major player evolve according to the equation
$\dot b=u$
with control $u$ from a compact convex set $U\in \R^r$.
The state space of the group being again given by vectors $x=(n_1, \cdots, n_d)/N$ from the lattice $\Z^d_+/N$,
the payoff of the major player will be given by
\[
\int_t^T J(x(s),(b(s), u(s)) \, ds +S_T(x(T),(b(T))
\]
where $J,S_T$ are some continuous functions uniformly Lipschitz in all their variables. The optimal payoff of the major player is thus
\begin{equation}
\label{eqHJBmaj0}
S_N(t,x(N),b)=\sup_{u(.)\in \tilde U}\E^N_{x(N),b}  \left\{ \int_t^T J(x(s),(b(s),u(s)) \, ds +S_T(x(T),(b(T))\right\},
\end{equation}
where $\E^N_{x,b}$ is the expectation of the corresponding Markov process starting at the position $(x,b)$ at time $t$,
and $\tilde U$ is the class of controls that are piecewise constant in $t$
 and Lipschitz in $x,b$ (so that the equations $\dot b =u(x,b)$ are trivially well-posed).
We are now in the standard Markov decision setting of a controlled Markov process generated by
the operator $L_{b,N}$ from \eqref{eqdefgenmeanfpool2}, or more precisely
\[
L_{b,N}S_N(t,x,b)=N\sum_{i,j: R_j(x,b)>R_i(x,b)} \ka x_i x_j
\]
\begin{equation}
\label{eqdefgenforlook}
\times
[R_j(x,b)-R_i(x,b)][S_N(t, x-e_i/N+e_j/N,b)-S_N(t,x,b)].
\end{equation}
As $N\to \infty$, the dimension of vectors $x$ tends to infinity making direct calculations complicated.

As seen from  \eqref{eqdefgenmeanfpool5}, the operators $L_{b,N}$ tend to a simple first order PDO, so that the limiting
optimization problem of the major player turns out to be the problem of finding
\begin{equation}
\label{eqlimoptimmajorconttime}
S(t,x,b)=\sup_{u(.)\in \tilde U} \left\{ \int_t^T J(x(s),b(s),u(s)) ds +S_T(x(T),(b(T))\right\},
\end{equation}
where $(x(s),(b(s))$ (depending on $u(.)$) solve the system of equations $\dot b=u$ and
\[
\dot x_j=\sum_{i} \ka x_i x_j[R_j(x,b)-R_i(x,b)], \quad j=1,...,d.
\]
The well-posedness of this system is a straightforward extension of the well-posedness of equations
\eqref{eqdefgenmeanfpool6brep}.

Instead of proving the convergence $ S_N(t,x(N),b) \to S(t,x,b)$, we shall concentrate on a more
practical issue comparing the corresponding discrete time approximations, as these approximations are
usually exploited for practical calculations of $S_N$ or $S$.

The discrete-time approximation to the limiting problem of finding \eqref{eqlimoptimmajorconttime}
is the problem of finding
\begin{equation}
\label{eqMarkdecevolcontdisc1}
V_{t,n}(x,b)=\sup_{\pi} V_{t,n}^{\pi}(x,b) =\sup_{\pi}\left[\tau J(x_0,b_0,u_0)+\cdots +\tau J(x_{n-1},b_{n-1}, u_{n-1})+V_0(x_n,b_n)\right],
\end{equation}
where $\tau =(T-t)/n$, $(x_0,b_0)=(x,b)$, $V_0(x,b)=S_T(x,b)$ and
\begin{equation}
\label{eqMarkdecevolcontdisc2}
b_k=b_{k-1}+u_{k-1}\tau, \quad
x_k=X(\tau, x_{k-1}, b_{k-1}), \quad k=1,2, \cdots,
\end{equation}
with $X(t,x,b)$ solving equation
\eqref{eqMarkdecevolback7}
with the initial condition $x$ at time $t=0$.
The discrete-time approximation to the initial optimization problem
is the problem of finding
\[
V_{t,n}^N(x_0,b_0)=\sup_{\pi} V_{t,n}^{\pi,N}(x_0,b_0)
\]
\begin{equation}
\label{eqMarkdecevolcontdisc3}
 =\sup_{\pi}\E_{N,x(N),b}
\left[\tau J(x_0,b_0,u_0)+\cdots +\tau J(x_{n-1},b_{n-1}, u_{n-1})+V_0(x_n,b_n)\right],
\end{equation}
where $x_k=X_N(\tau,x_{k-1},b_{k-1})$ with $X_N(t,x,b)$ denoting the Markov process with generator
\eqref{eqdefgenmeanfpool2mod}. The strategies $\pi$ here specify the choice of control parameters
$u_k$ based on the previous information.

\begin{remark}
It is well known that $V_n(x,b)$ and $V_n^N(x,b)$ with $V_0=S_T$ approach
the optimal solutions $S(T-t,b,x)$ and $S_N(T-t,x,b)$
given by \eqref{eqlimoptimmajorconttime} and \eqref{eqHJBmaj0} respectively, see e. g. Theorem 4.1 of
\cite{FleSo} or Theorem 3.4 of \cite{KolMas}.
\end{remark}

\begin{theorem}
\label{thMarkDeconEvolcontdisc}
Recall that $J,S_T$ are uniformly Lipschitz in all their variables. Then,
for any $x$ and $t\in [0,T]$
\begin{equation}
\label{eq1thMarkDeconEvolcontdisc}
|V_{t,n}^N(x)-V_{t,n}(x)|\le C(T) (T-t)^{2/3} (n/N)^{1/3} \|V_0\|_{bLip}.
\end{equation}
\end{theorem}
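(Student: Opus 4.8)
The plan is to mirror the proof of Theorem \ref{thMarkDeconEvolBack1}, working now on the extended state space $(x,b)\in(\Z_+^d/N)\times\R^r$ on which the control $u$ acts only through the deterministic drift $\dot b=u$. First I would record the two Shapley operators for the extended problem,
\[
\hat S V(x,b)=\sup_{u\in U}\left[\tau J(x,b,u)+V(X(\tau,x,b),b+u\tau)\right],
\]
\[
\hat S_N V(x,b)=\sup_{u\in U}\left[\tau J(x,b,u)+\E V(X_N(\tau,x,b),b+u\tau)\right],
\]
so that by dynamic programming $V_{t,n}=\hat S^n V_0$ and $V_{t,n}^N=\hat S_N^n V_0$ with $V_0=S_T$ and $\tau=(T-t)/n$. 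The crucial structural observation is that the $b$-coordinate evolves deterministically and identically, $b\mapsto b+u\tau$, in both models; all of the stochasticity, and hence all of the discrepancy between $\hat S_N$ and $\hat S$, sits in the $x$-marginal. Thus for a frozen control $u$ and frozen $b$ the one-step difference is exactly $\E V(X_N(\tau,x,b),b+u\tau)-V(X(\tau,x,b),b+u\tau)$, to which Theorem \ref{th1} applies in the variable $x$ alone, with $\ep(N)=0$ because here $b$ is a genuine constant rather than $b^*(x,N)$.

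The first substantive step is to show that the iterates $V_k=\hat S^k V_0$ are uniformly Lipschitz in $(x,b)$ with a constant bounded by some $C(T)$ independent of $n$. This follows from a one-step recursion: using that $J$ is Lipschitz in $(x,b,u)$, that $b+u\tau$ is $1$-Lipschitz in $b$, and that $X(\tau,x,b)$ is Lipschitz in $(x,b)$ with constant of order $e^{\tau F}$ (from the well-posedness of \eqref{eqMarkdecevolback7} and the Lipschitz dependence of its right-hand side on $b$), one obtains $L_{k+1}\le \tau\|J\|_{Lip}+(1+e^{\tau F})L_k$, where $L_k$ is the Lipschitz constant of $V_k$. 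Because $\tau=(T-t)/n$, iterating this $n$ times produces a factor of the form $(1+e^{\tau F})^n\sim e^{cn\tau}=e^{c(T-t)}$, which stays bounded as $n\to\infty$; this uniform-in-$n$ Lipschitz control is the technical heart of the argument, exactly as in part (i) of Theorem \ref{thMarkDeconEvolBack1}.

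With the uniform Lipschitz bound in hand, I would close the estimate by the semigroup-perturbation telescoping
\[
\hat S_N^n V_0-\hat S^n V_0=\sum_{j=0}^{n-1}\hat S_N^{\,j}(\hat S_N-\hat S)\hat S^{\,n-1-j}V_0,
\]
whose nonexpansiveness in the sup-norm (each Shapley operator is a supremum over $u$ of maps that are affine and $1$-Lipschitz in $V$) yields $\|\hat S_N^n V_0-\hat S^n V_0\|\le n\sup_{0\le k<n}\|(\hat S_N-\hat S)V_k\|$. Since the supremum over $u$ of a uniformly bounded difference is bounded by that difference, the observation of the first paragraph together with the Lipschitz estimate \eqref{th1eq1o} (with $\ep(N)=0$) gives $\|(\hat S_N-\hat S)V_k\|\le C(T)\tau^{2/3}N^{-1/3}\|V_k\|_{bLip}\le C(T)\tau^{2/3}N^{-1/3}\|V_0\|_{bLip}$. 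Summing over the $n$ terms and substituting $\tau=(T-t)/n$ turns $n\cdot\tau^{2/3}N^{-1/3}$ into $(T-t)^{2/3}(n/N)^{1/3}$, which is \eqref{eq1thMarkDeconEvolcontdisc}. The main obstacle is precisely the uniform-in-$n$ Lipschitz bound of the preceding paragraph: without it the per-step factor $\|V_k\|_{bLip}$ would compound geometrically and overwhelm the summation, and it is the $\tau=(T-t)/n$ scaling that keeps this factor under control.
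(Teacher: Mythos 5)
Your overall route is exactly the paper's: the published proof of Theorem \ref{thMarkDeconEvolcontdisc} consists of the single remark that it is a direct consequence of Theorem \ref{thMarkDeconEvolBack1}, the only difference being that the control parameter $u$ is distinct from the state coordinate $b$, and this does not affect the argument. Your write-up is the honest expansion of that remark: extended Shapley operators on $(x,b)$, a uniform-in-$n$ Lipschitz bound for the iterates, telescoping plus nonexpansiveness of the Shapley operators, and the one-step rate from Theorem \ref{th1} with $\ep(N)=0$ because $b$ is frozen within each step. The telescoping identity, the contraction argument, and the bookkeeping $n\cdot\tau^{2/3}N^{-1/3}=(T-t)^{2/3}(n/N)^{1/3}$ are all correct and match the paper.

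There is, however, one quantitative slip at precisely the step you single out as the technical heart. The recursion $L_{k+1}\le\tau\|J\|_{Lip}+(1+e^{\tau F})L_k$ does \emph{not} yield a uniform-in-$n$ bound: $(1+e^{\tau F})^n\ge 2^n$, so the claim that $(1+e^{\tau F})^n\sim e^{cn\tau}$ stays bounded as $n\to\infty$ is false. The factor $1+e^{\tau F}$ comes from adding the Lipschitz constant $1$ of the $b$-update to the constant $e^{\tau F}$ of the flow, which is too crude. The repair is to note that the sensitivity of $X(\tau,x,b)$ to $b$ is only of order $\tau$: by Gronwall, a perturbation of $b$ enters the $x$-equation through the drift integrated over a time interval of length $\tau$, so the corresponding Lipschitz coefficient is at most $C\tau e^{\tau F}$. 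Hence the joint map $(x,b)\mapsto(X(\tau,x,b),\,b+u\tau)$ is Lipschitz, in the norm $|x_1-x_2|_1+|b_1-b_2|_1$, with constant at most $\max\bigl(e^{\tau F},\,1+C\tau e^{\tau F}\bigr)\le e^{c\tau}$. With $L_{k+1}\le\tau\|J\|_{Lip}+e^{c\tau}L_k$ the amplification over $n$ steps is $e^{cn\tau}=e^{c(T-t)}$, giving $L_k\le C(T)\bigl(\|V_0\|_{Lip}+\|J\|_{Lip}\bigr)$ uniformly in $n$; the remainder of your argument then goes through verbatim.
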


\begin{proof}
This is a direct consequence of Theorem \ref{thMarkDeconEvolBack1}. The only difference is the use of control parameter
that is distinct from the state $b$, but this does not affect the proof.
\end{proof}

\section{Models of growth under pressure}
\label{secmodgrowthpres}

\subsection{General convergence result for evolutions in $l^1$}
\label{secconvcount}

Here we extend the results of Subsection \ref{secondiscrtimeforwardlooking} in two directions,
namely, by working with a countable (rather than finite or compact) state-space and unbounded rates,
and with more general interactions allowing in particular for a change in the number of particles.

Thus we take the set of natural numbers $\{1,2, \cdots \}$ as the state space of each small player, the set of finite
Borel measures on it being the Banach space $l^1$ of sumable real sequences $x=(x_1, x_2, \cdots ) $.

Thus the state space of the total multitude of
small players will be formed by the set $\Z^{fin}_+$ of sequences of integers
$n=(n_1, n_2, \cdots )$ with only finite number of non-vanishing ones,
with $n_k$ denoting the number of players in the state $k$, the total number
of small players being $N=\sum_k n_k$.
As we are going to extend the analysis to processes not preserving the number of particles,
we shall work now with a more general scaling
of the states, namely with the sequences
\[
x=(x_1,x_2, \cdots ...)=hn =h(n_1,n_2, \cdots ...)\in h\Z_+^{fin}
\]
with certain parameter $h>0$, which can be taken, for instance, as the inverse number to
the total number of players $\sum_k n_k$ at the initial moment of observation.
The necessity to distinguish initial moment is crucial here, as this number changes over time.
Working with the scaling related to the current number of particles $N$ may lead, of course,
to different evolutions.

The general processes of birth, death, mutations and binary interactions that can occur under an
influence $b$ of the principle are Markov chains on $h\Z_+^{fin}$ specified by the generators of the following type
\[
L_{b,h}F(x)=\frac{1}{h}\sum_j \be_j(x,b)[F(x+he_j)-F(x)]
+\frac{1}{h}\sum_j \al_j(x,b)[F(x-he_j)-F(x)]
\]
\[
+\frac{1}{h}\sum_{i,j} \al_{ij}^1(x,b)[F(x-he_i+he_j)-F(x)]
+\frac{1}{h}\sum_{i,(j_1,j_2)} \al_{i(j_1j_2)}^1(x,b)[F(x-he_i+he_{j_1}+he_{j_2})-F(x)]
\]
\[
+\frac{1}{h}\sum_{(i_1,i_2),j} \al^2_{(i_1i_2)j}(x,b)[F(x-he_{i_1}-he_{i_2}+he_j)-F(x)]
\]
\begin{equation}
\label{eqgenbininterpres}
+\frac{1}{h}\sum_{(i_1,i_2)}\sum_{(j_1,j_2)} \al^2_{(i_1i_2)(j_1j_2)}(x,b)[F(x-he_{i_1}-he_{i_2}+he_{j_1}+he_{j_2})-F(x)],
\end{equation}
where brackets $(i,j)$ denote the pairs of states.
Here the terms with $\be_j$ and $\al_j$ describe the spontaneous injection (birth) and death of agents,
the terms with $\al^1$ describe the multiplication or mutations of single agents (including fragmentation and splitting),
the terms with $\al^2$ describe the binary interactions, with all terms including possible mean-field interactions.
Say, our model \eqref{eqdefgenmeanfpool2}
was an example of binary interaction.

Let $L$ be a positive increasing function on $\N$ such that $L(j)\to \infty$ as $j\to \infty$.
We shall refer to such functions as Lyapunov functions.
Notations from Appendix B will be used here for different norms and notions related to a Lyapunov function $L$
(see \eqref{eqdefweighnorml1} and the discussion around it).
We say that the generator $L_{b,h}$ with $\be_j=0$ and the corresponding process do not increase $L$ if for any allowed
transition the total value of $L$ cannot increase, that is if $\al_{ij}^1 \neq 0$, then $L(j)\le L(i)$,
if  $\al_{i(j_1,j_2)}^1 \neq 0$, then $L(j_1)+L(j_2)\le L(i)$,  if $\al_{(i_1i_2)j}^2 \neq 0$, then $L(j)\le L(i_1)+L(i_2)$,
if  $\al_{(i_1i_2)(j_1j_2)}^2 \neq 0$, then $L(j_1)+L(j_2)\le L(i_1)+L(i_2)$.
If this is the case, then the chains generated by $L_{b,h}$ always remain in a ball $B_+(L,R)$,
if they were started there. Hence for any $h$ and $R$, $L_{b,h}$ generates a well-defined Markov chains
$X_{b,h}(t,x)$ in any of the finite state-spaces $h\Z_+^{fin}\cap B_+(L,R)$
(the corresponding Kolmogorov $Q$-matrices are transpose to the matrices representing $L_{b,h}$).

A generator $L_{b,h}$ is called
$L$-subcritical if $L_{b,h}(L) \le 0$. Of course, if $L_{b,h}$ does not increase $L$, then it is $L$-subcritical.
Though the condition to not increase $L$ seems to be restrictive, many concrete models satisfy it,
for instance the celebrated merging-splitting (Smoluchovskii) process considered below. On the other hand,
models with spontaneous injections may increase $L$, so that one is confined to work with the weaker
property of sub-criticality.

We shall denote by $C(B_+(L,R)\subset l^1)$ and $C^k(B_+(L,R)\subset l^1)$ the spaces of
continuous and differentiable functions on $B_+(L,R)$ with $B_+(L,R)$ considered as a subset of $l^1$,
that is, equipped with the topology of $l^1$, where these sets are easily seen to be compact.
Similar notations for Banach-space valued functions will be used.

By Taylor-expanding $F$ in \eqref{eqgenbininterpres} one sees that if $F$ is sufficiently smooth,
the sequence $L_{b,h}F$ converges to
\[
\La_bF(x)=\sum_j (\be_j(x,b)-\al_j(x,b))\frac{\pa F}{\pa x_i}
+\sum_{i,j} \al_{ij}^1(x,b)[\frac{\pa F}{\pa x_j}-\frac{\pa F}{\pa x_i}]
\]
\[
+\sum_{i,(j_1,j_2)} \al_{i(j_1j_2)}^1(x,b)[\frac{\pa F}{\pa x_{j_1}} +\frac{\pa F}{\pa x_{j_2}}-\frac{\pa F}{\pa x_i}]
+\sum_{(i_1,i_2)}\sum_j \al^2_{(i_1i_2)j}(x,b)[\frac{\pa F}{\pa x_j}-\frac{\pa F}{\pa x_{i_1}}-\frac{\pa F}{\pa x_{i_2}}]
\]
\begin{equation}
\label{eqgenbininterpreslim}
+\sum_{(i_1,i_2)}\sum_{(j_1,j_2)} \al^2_{(i_1i_2)(j_1j_2)}(x,b)
[\frac{\pa F}{\pa x_{j_1}} +\frac{\pa F}{\pa x_{j_2}}-\frac{\pa F}{\pa x_{i_1}}-\frac{\pa F}{\pa x_{i_2}}].
\end{equation}
Moreover,
\begin{equation}
\label{eqgenbininterpreslim1}
\|(L_{b,h}-\La_b)F\|_{C(B_+(L,R)\subset l^1)}
 \le 8h \ka (L,R) \|F\|_{C^2(B_+(L,R)\subset l^1)},
\end{equation}
with $\ka (L,R)$ being the $\sup_b$ of the norms
\[
\left\|\sum_i (\al_i+\be_i) +\sum_{i,j} \al_{ij}^1+\sum_{i,(j_1,j_2)} \al^1_{i(j_1j_2)}
+\sum_{(i_1,i_2),j} \al^2_{(i_1i_2)j}
+\sum_{(i_1,i_2),(j_1,j_2)} \al^2_{(i_1i_2)(j_1j_2)}\right\|_{C(B_+(L,R)\subset l^1)}.
\]

By regrouping the terms of $\La_b$, it can be rewritten in the form of the general first order operator
\begin{equation}
\label{eqgenbininterpreslim2}
\La_b F(x)=\sum_j f_j(x)\frac{\pa F}{\pa x_j},
\end{equation}
where
\[
f_i=\be_i -\al_i+\sum_k (\al^1_{ki}-\al^1_{ik})
+\sum_k [\al^1_{k(ii)}+\sum_{j\neq i} (\al^1_{k(ij)}+\al^1_{k(ji)}]-\sum_{(j_1,j_2)} \al^1_{i(j_1j_2)}
\]
\[
+\sum_{(j_1,j_2)} \al^2_{(j_1j_2)i}-\sum_k [\al^2_{(ii)k}+\sum_{j\neq i} (\al^2_{(ij)k}+\al^2_{(ji)k})]
\]
\[
+\sum_{(j_1,j_2)}[\al^2_{(j_1j_2)(ii)}+\sum_{j\neq i} (\al^2_{(j_1j_2)(ji)}+\al^2_{(j_1j_2)(ij)})]
\]
\[
-\sum_{(j_1,j_2)}[\al^2_{(ii)(j_1j_2)}+\sum_{j\neq i} (\al^2_{(ij)(j_1j_2)}+\al^2_{(ji)(j_1j_2)})].
\]

Its characteristics solving the ODE $\dot x =f(x)$ can be expected to describe the limiting behavior
of the Markov chains $X_{b,h}(x,t)$ for $h\to 0$.

\begin{theorem}
\label{th1infin}
Assume the operators $L_{b,h}$ are $L$ non-increasing for
a Lyapunov function $L$ on $\Z$ such that $L(j)\to \infty$ as $j\to \infty$,
the function $f: l^1_+ \to l^1$ is uniformly Lipschitz on $B_+(R,L)$ and $\ka(\La,R)<\infty$.
Then the Markov chains $X_h(t,x(h))$ with $x(h)\in B_+(R,L)$ converge in distribution to the deterministic
evolution $X(t,x)$ solving equation $\dot x=f(x)$ and moreover
\begin{equation}
\label{eq1ath1infin}
|\E F (X_h(t,x(h)))-F(X(t,x(h)))| \le t C(R,t)\frac{1}{N^{1/3}} \|F \|_{C^2(B_+(L,R)\subset l^1)}
\end{equation}
\begin{equation}
\label{eq1bth1infin}
|\E F (X_h(t,x(h)))-F(X(t,x(h)))| \le C(R,t)\frac{t^{4/5}}{N^{1/5}} \|F \|_{C_{bLip}(B_+(L,R)\subset l^1)}
\end{equation}
with constants $C(R,t)$.
If $f$ is uniformly twice continuously differentiable, then the same estimates hold with the improved rates
$1/N$ and $1/N^{1/3}$ respectively.
\end{theorem}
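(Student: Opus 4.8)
The plan is to follow the same analytic scheme used for Theorems \ref{th1} and \ref{th2}, now carried out in the Banach space $l^1$ and with the unbounded rates tamed by the Lyapunov function $L$. First I would record that the limiting equation $\dot x=f(x)$ is globally well posed on each ball $B_+(L,R)$: since $f$ is uniformly Lipschitz there this is immediate from Lemma \ref{lemonwellposedinl1} (barrier $L$), and the $L$ non-increasing hypothesis guarantees that both the prelimit chains $X_h(t,x(h))$ and the limiting flow $X(t,\cdot)$ never leave $B_+(L,R)$, so that all estimates may be localized to this compact subset of $l^1$. The engine of the proof is the Duhamel (semigroup comparison) identity \eqref{eqcompsem2}, which bounds $\|U_h^t F-U^t F\|$ by $t\sup_{s\le t}\|(L_{b,h}-\La_b)U^s F\|$, together with the dimension free generator estimate \eqref{eqgenbininterpreslim1}, namely $\|(L_{b,h}-\La_b)G\|\le 8h\ka(L,R)\|G\|_{C^2(B_+(L,R)\subset l^1)}$, where $h$ plays the role of $1/N$.

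When $f$ is twice continuously differentiable the argument is direct and yields the improved rates. By the smooth dependence of the flow on initial data (Lemma \ref{lemonsenseBanach1}) the propagated function $U^s F=F\circ X(s,\cdot)$ is again $C^2$ with norm controlled by $\|F\|_{C^2}$; feeding this into \eqref{eqcompsem2} and \eqref{eqgenbininterpreslim1} gives the bound $tC(R,t)h\|F\|_{C^2}$, that is the improved rate $1/N$ accompanying \eqref{eq1ath1infin}. For a merely Lipschitz test function $F$ one first replaces $F$ by a smooth mollification $F_{\tilde\de}$, pays the approximation error $\tilde\de\|F\|_{bLip}$ together with the enlarged norm $\|F_{\tilde\de}\|_{C^2}$, and then optimizes $\tilde\de$ against the generator term; the balance produces the improved rate $1/N^{1/3}$ accompanying \eqref{eq1bth1infin}.

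When $f$ is only Lipschitz the flow is no longer $C^2$, so $U^s F$ need not be smooth and \eqref{eqgenbininterpreslim1} cannot be applied to it. As in the proof of Theorem \ref{th1} I would circumvent this by smoothing the coefficient itself: introduce a $C^2$ approximation $f_\de$ of $f$ on $B_+(L,R)$, let $\La_{b,\de}$, $L_{b,h}^\de$ and $U_\de^t$ be the associated limiting operator, prelimit generator and semigroup, and compare $U_h^t F$ with $U^t F$ through the intermediate $U_\de^t F$. This splits the error into (i) the flow comparison $|U^t F-U_\de^t F|\lesssim \de\,t\,\|F\|_{bLip}$, coming from $\|f-f_\de\|\lesssim\de$ by Gronwall; (ii) the generator comparison $t\sup_s\|(L_{b,h}-L_{b,h}^\de)U_\de^s F\|\lesssim t\de\|F\|_{bLip}$ from the same coefficient difference; and (iii) the smooth generator term $t\sup_s\|(L_{b,h}^\de-\La_{b,\de})U_\de^s F\|\lesssim th\|U_\de^s F\|_{C^2}$ controlled by \eqref{eqgenbininterpreslim1}, where now $U_\de^s F$ is $C^2$ by Lemma \ref{lemonsenseBanach1}. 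Optimizing $\de$ (and, for Lipschitz $F$, the second scale $\tilde\de$) against $h=1/N$ yields \eqref{eq1ath1infin} and \eqref{eq1bth1infin}. One must also check that the auxiliary generators $L_{b,h}^\de$ stay confined to a slightly enlarged Lyapunov ball, so that the intermediate chains are well defined; this is a routine perturbation of the $L$ non-increasing bookkeeping.

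The main obstacle, and the reason the exponents here are weaker than the $1/\sqrt N$ and $1/N^{1/3}$ of Theorem \ref{th1}, is the smoothing step in the genuinely infinite dimensional space $l^1$. Since there is no translation invariant (Lebesgue) measure on $l^1$, one cannot mollify as freely as in $\R^d$, and the Fr\'echet $C^2$ norm of a mollification on $B_+(L,R)\subset l^1$ grows like $1/\de^2$ rather than the finite dimensional $1/\de$. Tracking this cost through the three way comparison above is what turns the naive two term balance into a genuinely two parameter optimization: balancing $\de$ against $h/\de^2$ gives the $1/N^{1/3}$ rate of \eqref{eq1ath1infin}, while balancing both $\de$ and $\tilde\de$ against the compounded second order term produces the $t^{4/5}/N^{1/5}$ rate of \eqref{eq1bth1infin}. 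Verifying the precise $1/\de^2$ smoothing bound, and the way the two mollification scales enter the propagated $C^2$ norm, is the technical crux; everything else reduces, via the Lyapunov confinement, to the finite dimensional bookkeeping already carried out for Theorems \ref{th1} and \ref{th2}.
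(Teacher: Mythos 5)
Your overall architecture --- Lyapunov confinement to $B_+(L,R)$, well-posedness via Lemma \ref{lemonwellposedinl1}, the Duhamel comparison \eqref{eqcompsem2} combined with the generator estimate \eqref{eqgenbininterpreslim1}, and the direct treatment of the smooth case via Lemma \ref{lemonsenseBanach1} --- matches the paper, and your smooth-$f$ argument giving the improved rate $1/N$ is exactly the one used there. The gap is in the merely Lipschitz case, where everything hinges on how you smooth functions on the infinite-dimensional ball $B_+(L,R)\subset l^1$. You assert that a mollification exists whose $C^2$ norm costs $1/\de^2$ instead of the finite-dimensional $1/\de$, and you build both rate computations on that claim, but you give no construction --- and, as you yourself note, there is no translation-invariant reference measure on $l^1$, so the convolution mollifiers of Theorems \ref{th1} and \ref{th2} simply do not exist there. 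This is not a technicality one can wave at: without a concrete smoothing operator with quantified $C^1$/$C^2$ bounds, step (iii) of your three-way comparison has no content. Moreover your proposed balances do not reproduce the stated exponents for the second estimate: balancing $\tilde\de\sim t\de$ against $th/(\de^2\tilde\de)$ yields $t^{3/4}N^{-1/4}$, not the $t^{4/5}N^{-1/5}$ of \eqref{eq1bth1infin}, which signals that the $1/\de^2$ ansatz is not the mechanism actually producing these rates.

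What the paper does instead --- and what your proposal is missing --- is a finite-dimensional reduction \emph{before} any mollification: one replaces $F$ by $F_j(x)=F(P_j^*(x))$ with $P_j^*$ the projection onto the first $j$ coordinates, and the Lyapunov confinement gives the truncation error $\|P_j^*(x)-x\|_{l^1}\le R/L(j)$ for $x\in B_+(L,R)$. The composition $F\circ P_j^*$ lives on a $j$-dimensional set, where the mollification $\Phi_\de$ of Theorem \ref{th1} applies verbatim, and the whole estimate then reduces to the bookkeeping of Theorem \ref{th2} with $n=1$: the three-term balance $1/j\sim\de j\sim 1/(N\de)$ gives $N^{-1/3}$, and the four-term balance including the second scale $\tilde\de$ gives $t^{4/5}N^{-1/5}$. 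So the exponents come from the interplay between the projection dimension $j$ and the mollification scale $\de$, not from any intrinsic infinite-dimensional smoothing cost. To repair your proof you should replace the hypothetical $l^1$-mollifier by this projection-then-mollify device (applied either to the test function $F$ or, as you suggest, to the coefficient $f$ --- both work once the problem is finite-dimensional), and redo the optimization over the pair $(j,\de)$ (and $\tilde\de$).
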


\begin{proof}
The proof is similar to the proof of Theorem \ref{th2},
though the lack of compactness is dealt with by $L$-subcritical condition that again
allows one to use effective finite-dimensional approximations.  Moreover, discrete setting allows one not to bother
about weak topology.

If $f$ is smooth and
\[
\|f\|_{C^2(B_+(L,R)\subset l^1;l^1)} \le D(R),
\]
then the solutions $X(t,x)$ to the equation $\dot x =f(x)$ are twice differentiable with respect to initial data and the
corresponding mapping $U^t:F(x) \mapsto F(X(t,x))$ are twice continuously differentiable by Lemma \eqref{lemonsenseBanach1}.
Hence the estimate
\begin{equation}
\label{eq1ath1infinrep}
|\E F (X_h(t,x(h)))-F(X(t,x(h)))| \le t C(R,t)\frac{1}{N} \|F \|_{C^2(B_+(L,R)\subset l^1)},
\end{equation}
claimed by the last statement of the Theorem, follows directly by \eqref{eqcompsem2} and \eqref{eqgenbininterpreslim1}.

If $f$ is only Lipschitz continuous we again use a finite-dimensional approximation $F(x) \to F_j(x)=F(P_j^*(x))$,
where now $P_j^*$ is just the projection on the first $j$ coordinates, that is $[P^*_j(x)]_k=x_k$
for $k\le j$ and $[P_j^*(x)]_k=0$ otherwise. For $x\in B_+(L,R)$,
\[
\|P_j^*(x)-x\|_{l^1}\le \frac{R}{L(j)},
\]
and hence one can further use the smooth approximation $\Phi_{\de}(F\circ P_j^*)$
with the same effect as in Theorem \ref{th2}. The dimension of the image of $P_j^*$ is $j$,
so the results of Theorem \ref{th2} apply with $n=1$ yielding \eqref{eq1ath1infin} and \eqref{eq1bth1infin}.
\end{proof}

Assume now that the principal is updating her strategy
 in discrete times $\{k\tau\}$, $k=0,1, \cdots ..., n-1$, with some fixed $\tau>0$, $n\in \N$ aiming
 at finding a strategy $\pi$ maximizing the reward \eqref{eqMarkdecevolback1}, but now with
 $x_0=x(h)\in h\Z^{fin}\cap  B_+(L,R)$.
 Using  Theorem \ref{th1infin},
 It is straightforward to extend  Theorem \ref{thMarkDeconEvolBack1} to the present
  setting of a countable state-space. Using the same notations as in  Theorem \ref{thMarkDeconEvolBack1}
  for rewards and Shapley operators yields the following result.

\begin{theorem}
\label{thMarkDisccontEvolu}

Assume the conditions of Theorem \ref{th1infin} hold and the function $B(x,b)$ is
uniformly Lipschitz on $B_+(R,L)$ as a function of the first variable.
Then, for any continuous $V_0$ on $B_+(R,L)$, $\tau>0$ and $n\in N$,
\begin{equation}
\label{MarkDeconEvolBackconttime}
|V_n^N(x(N))-V_n(x)|\le C(t)(|x(N)-x|+t^{4/5}(n/N)^{1/5}) \|V_0\|_{bLip},
\end{equation}
where $t=n\tau$ is the total time. In case when $f$ and $B$ are twice continuously differrentiable,
the rates of convergence improve to $N^{-1/3}$.
\end{theorem}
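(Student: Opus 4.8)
The plan is to follow closely the architecture of the proof of Theorem~\ref{thMarkDeconEvolBack1}, replacing its single analytic input---the finite--state one--step estimate of Theorem~\ref{th1}---by the countable--state one--step estimate of Theorem~\ref{th1infin}. Write $S[N]$ for the Markov Shapley operator \eqref{eqMarkdecevolback1a} built from the chain $X_h(\tau,x,b)$, so that $V_n^N=S^n[N]V_0$, and $S$ for the deterministic one \eqref{eqMarkdecevolback8} built from the flow $X(\tau,x,b)$ of $\dot x=f(x)$, so that $V_n=S^nV_0$; here $N=1/h$ is the initial number of players, and $L$--subcriticality confines every chain to the $l^1$--compact ball $B_+(R,L)$, so all operators act on $C(B_+(R,L))$.

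First I would establish a uniform Lipschitz bound on the value functions. Since $B$ is uniformly Lipschitz in $x$ and, by the well--posedness of Lemma~\ref{lemonwellposedinl1}, the flow $X(\tau,\cdot,b)$ depends on its initial datum in $l^1$ with Lipschitz constant at most $e^{\tau F}$ (Gronwall, $F$ the Lipschitz constant of $f$ on $B_+(R,L)$) uniformly in $b$, the estimate
\[
|SV(x_1)-SV(x_2)|\le\big(\tau\|B\|_{bLip}+e^{\tau F}\|V\|_{bLip}\big)\,|x_1-x_2|_{l^1}
\]
holds, and iterating it gives $\|V_k\|_{bLip}\le C(t)\|V_0\|_{bLip}$ uniformly for $k\tau\le t$. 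This already produces the initial--data term, via $|V_n(x(N))-V_n(x)|\le\|V_n\|_{bLip}\,|x(N)-x|$.

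Next I would use that both $S[N]$ and $S$ are non--expansive in the sup--norm (taking the supremum over $b$ of a difference of expectations only decreases distances) and telescope along $a_j:=S^j[N]S^{n-j}V_0$, with $a_0=V_n$ and $a_n=V_n^N$, so that $\|a_{j+1}-a_j\|\le\|(S[N]-S)S^{n-1-j}V_0\|$ and hence $\|V_n^N-V_n\|\le n\sup_k\|(S[N]-S)S^kV_0\|$. For the one--step difference the sup--of--suprema inequality gives $\|(S[N]-S)V\|\le\sup_b\|\,\E V(X_h(\tau,\cdot,b))-V(X(\tau,\cdot,b))\,\|$, to which I apply \eqref{eq1bth1infin} with $t=\tau$ and test function $V$, obtaining $C(R,\tau)\tau^{4/5}N^{-1/5}\|V\|_{bLip}$. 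Inserting the uniform Lipschitz bound on $S^kV_0$ and using the scaling identity $n\tau^{4/5}=t^{4/5}n^{1/5}$ yields $\|V_n^N-V_n\|\le C(t)\,t^{4/5}(n/N)^{1/5}\|V_0\|_{bLip}$; together with the initial--data term this is exactly \eqref{MarkDeconEvolBackconttime}. In the twice--differentiable case the value functions $V_k$ are still only Lipschitz (the supremum over $b$ destroys second--order regularity), so I would again test against the Lipschitz $V_k$ but invoke the improved smooth--$f$ rate $N^{-1/3}$ of Theorem~\ref{th1infin} in place of $N^{-1/5}$; the identical telescoping then gives the stated $N^{-1/3}$ rate.

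The one genuinely delicate point is uniformity of the per--step constant as $\tau\to0$: the telescoping adds $n$ copies of the one--step error, so a bound $C(R,\tau)\tau^{4/5}$ with $C(R,\tau)$ blowing up as $\tau\to0$ would spoil the final power. This is precisely what the uniform boundedness of $C(R,t)$ for bounded $t$ in Theorem~\ref{th1infin} excludes, $L$--subcriticality being what makes that estimate available uniformly over the noncompact state space by keeping every chain inside $B_+(R,L)$. Once this uniformity is secured, the Markov--decision part of the argument is identical to the finite--dimensional Theorem~\ref{thMarkDeconEvolBack1}, and no new infinite--dimensional difficulty intervenes beyond those already absorbed into Theorem~\ref{th1infin}.
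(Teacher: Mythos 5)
Your proposal is correct and is precisely the argument the paper intends: the paper gives no explicit proof, stating only that Theorem \ref{thMarkDeconEvolBack1} extends straightforwardly via Theorem \ref{th1infin}, and your write-up carries out exactly that extension (Lipschitz propagation of $V_k$ under the Shapley operators, telescoping with non-expansiveness, one-step error from \eqref{eq1bth1infin}, and the scaling $n\tau^{4/5}=t^{4/5}n^{1/5}$). Your remarks on the uniformity of the per-step constant and on using the Lipschitz-test-function rate $N^{-1/3}$ in the smooth case are the right way to fill in the details the paper leaves implicit.
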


\subsection{Evolutionary coalition building under pressure}
\label{seccoalbuildpres}

As a direct application of Theorem \ref{thMarkDisccontEvolu}, let us
discuss the model of evolutionary coalition building.
Namely, so far we talked about small players that occasionally and randomly
exchange information in small groups (mostly in randomly formed pairs) resulting in copying
the most successful strategy by the members of the group. Another natural reaction of the society
of small players to the pressure exerted by the principal can be executed by forming stable groups that
can confront this pressure in a more effective manner (but possibly imposing certain obligatory regulations
for the members of the group). Analysis of such possibility leads one naturally to models of mean-field-enhanced coagulation
processes under external pressure.
Coagulation-fragmentation processes are well studied in statistical physics, see e. g. \cite{Nor00}.
In particular, general mass-exchange processes, that in our social environment become general coalition forming processes
preserving the total number of participants, were analyzed in \cite{Ko04} and
\cite{Ko06} with their law of large number limits for discrete and general
state spaces. Here we add to this analysis a strategic framework for a major player fitting the model
to the more general framework of the previous section.
Instead of coagulation and fragmentation we shall use here the terms merging and splitting or breakage.

For simplicity, we ignore here any other behavioral distinctions
(assuming no strategic space for an individual player)
concentrating only on the process of forming coalitions.
Thus the state space of the total multitude of
small players will be formed by the set $\Z^{fin}_+$ of sequences of integers
$n=(n_1, n_2, \cdots ...)$ with only finite number of non-vanishing ones,
with $n_k$ denoting the number of coalition of size $k$, the total number
of small players being $N=\sum_k kn_k$ and the total number of coalitions
(a single player is considered to represent a coalition of size $1$) being $\sum_k n_k$.
 Also for simplicity we reduce attention
to binary merging and breakage only, extension to arbitrary regrouping processes
from \cite{Ko04} (preserving the number of players) is more-or-less straightforward.

 As previously, we will look for the evolution of appropriately scaled states, namely the sequences
\[
x=(x_1,x_2, \cdots ...)=hn =h(n_1,n_2, \cdots ...)\in h\Z_+^{fin}
\]
with certain parameter $h>0$, which can be taken, for instance, as the inverse number to
the total number of coalitions $\sum_k n_k$ at the initial moment of observation.

If any randomly chosen pair of coalitions of sizes $j$ and $k$ can merge with the rates $C_{kj}(x,b)$,
which may depend on the whole composition $x$ and the control parameter $b$ of the major player,
and any randomly chosen coalition of size $j$ can split (break, fragment) into two groups of sizes $k<j$ and $j-k$
with rate $F_{jk}(x,b)$, the limiting deterministic evolution of the state is known to be described by the
system of the so-called Smoluchovski equations
\begin{equation}
\label{eqSmoleqcoagfrag}
\dot x_k=f_k(x)=\sum_{j<k} C_{j,k-j}(x,b) x_j x_{k-j}-2\sum_j C_{kj}(x,b) x_j x_k
+2\sum_{j>k} F_{jk}(x,b) x_j -\sum_{j<k} F_{kj}(x,b) x_k.
\end{equation}
In addition to the well known setting with constant $C_{jk}$ and $F_{jk}$ (see e. g. \cite{BaCa}) we added here the
mean field dependence of these coefficients (dependence on $x$) and the dependence on the control parameter $b$.

As one easily checks, equations \eqref{eqSmoleqcoagfrag} can be written in the equivalent weak form
\begin{equation}
\label{eqSmoleqcoagfragweak}
\frac{d}{dt} \sum_j g_jx_j=\sum_{j,k}(g_{j+k}-g_j-g_k) C_{jk}(x,b) x_j x_k
+\sum_j \sum_{k<j} (g_{j-k}+g_k-g_j) F_{jk}(x,b) x_j,
\end{equation}
which should hold for a suitable class of test functions $g$. For instance, under the assumption of bounded coefficients
(see \eqref{eqSmoleqcoagfragboundco} below), the class of test functions is the class of all functions from
$l^{\infty}=\{g: \sup_j |g_j|<\infty\}$.
This implies, in particular, that the corresponding semigroups \eqref{eqdefflowonkineq}
on the space of continuous functions,
that is $U^tG(x)=G(X(t,x))$, have the generator
\[
\La_b G(x)=\sum_k f_k(x) \frac{\pa G}{\pa x_k}(x)
=\sum_{j,k}\left(\frac{\pa G}{\pa x_{k+j}}-\frac{\pa G}{\pa x_j}-\frac{\pa G}{\pa x_k}\right) C_{jk}(x,b) x_j x_k
\]
\begin{equation}
\label{eqencoagfrag}
+\sum_j \sum_{k<j} \left(\frac{\pa G}{\pa x_{j-k}}-\frac{\pa G}{\pa x_j}+\frac{\pa G}{\pa x_k}\right) F_{jk}(x,b) x_j
\end{equation}
of type \eqref{eqgenbininterpreslim}.

Let $R_j(x,b)$ be the payoff for the member of a coalition of size $j$. In our strategic setting,
the rates $C_{jk}(x,b)$ and $F_{jk}(x,b)$ should depend on the differences of these rewards before and after merging or splitting.
For instance, the simplest choices can be
\begin{equation}
\label{eqcoagstrateg1}
C_{kj}(x,b)=a_{j+k,k}\1_{R_{k+j}\ge R_k}  (R_{k+j}- R_k)
+a_{j+k,j}\1_{R_{k+j}\ge R_j}  (R_{k+j}- R_j),
\end{equation}
with some constants $a_{lk}\ge 0$
reflecting the assumption that merging may occur whenever it is beneficial for all members concerned but weighted according
to the size of the coalitions involved, where by $\1_M$ here and in what follows we denote the indicator function of the set $M$.
Similarly
\begin{equation}
\label{eqfragstrateg1}
F_{kj}(x,b)=\tilde a_{kj}\1_{R_j \ge R_k} (R_j-R_k)+\tilde a_{k,k-j}\1_{R_{k-j} \ge R_k} (R_{k-j}-R_k).
\end{equation}

A Markov approximation to dynamics \eqref{eqSmoleqcoagfrag} is constructed in the standard way,
which is analogous to the constructions of approximating Markov chains described in the previous section
(for coagulation - fragmentation processes this Markov approximation is often referred
 to as the Markus-Lushnikov process, see e.g. \cite{Nor00}), namely, by attaching exponential
clocks to any pair of coalitions that can merge with rates $C_{kj}$ and to any coalition
that can split with rates $F_{kj}$. This leads to a Markov chain $X_h(t,x,b)$ on $h\Z^{fin}_+$ with the generator
\[
\La_{b,h}G (x)=\sum_{i,j} C_{ij}(x,b) x_i x_j [G(x-he_i-he_j+he_{i+j})-G(x)]
\]
\begin{equation}
\label{eqSmolgencoagfrag}
+\sum_i \sum_{j<i} F_{ij}(x,b) x_i [G(x-he_i+he_j+he_{i+j})-G(x)],
\end{equation}
where $e_1,e_2, \cdots$ denote the standard basis in $\R^{\infty}$.
There exists an extensive literature showing the well -posedness of infinite-dimensional dynamics
\eqref{eqSmoleqcoagfrag} and proving the convergence, as $h\to 0$, of Markov chains generated by \eqref{eqSmolgencoagfrag}
under various assumptions on the coefficients $C$ and $F$ (see e. g. \cite{Nor00} and \cite{Ko06} and references therein).
However, to deal with a forward -looking principal, some uniform rates of convergence are needed, like those
of Theorem \ref{th1infin}.

We shall propose here only the simplest result in this direction assuming that
the intensities of individual transition are uniformly bounded and uniformly Lipschitz,
 that is
\begin{equation}
\label{eqSmoleqcoagfragboundco}
C=\sup_{j,k} C_{jk}(x,b) <\infty, \quad F =\sup_j \sum_{k<j} F_{kj}(x,b) <\infty,
\end{equation}
\begin{equation}
\label{eqSmoleqcoagfragboundco2}
\begin{aligned}
& C(1)=\sup_{b,j,k} \|C_{jk} (.,b)\|_{C_{bLip}(B_+(R,L)\subset l^1)} <\infty, \\
& F(1) =\sup_{b,j} \sum_{k<j} \|F_{kj}(.,b)\|_{C_{bLip}(B_+(R,L)\subset l^1)} <\infty.
\end{aligned}
\end{equation}
Notice however that the overall intensities are still unbounded (quadratic),
so that we are still quite away from the assumptions of Section \ref{secforwardprin1}.

Choosing the function $L(j)=j$ we see that Markov chains $X_h(t,x,b)$ do not increase $L$.
Moreover,  \eqref{eqSmoleqcoagfragboundco} implies
\begin{equation}
\label{eqSmoleqcoagfragboundco1}
\begin{aligned}
& \sup_b \|f (.,b)\|_{C(B_+(R,L)\subset l^1);l^1} \le 3CR^2+3FR, \\
& \sup_b \|f (.,b)\|_{C_{bLip}(B_+(R,L)\subset l^1); l^1} \le 6CR+3F+3(C(1)R+F(1))R
\end{aligned}
\end{equation}
and hence the following result.

\begin{theorem}
\label{thMarkDisccontEvolucoal}
For a model of strategically enhanced coalition building subject to \eqref{eqSmoleqcoagfragboundco} and \eqref{eqSmoleqcoagfragboundco2}
the conditions of Theorem \ref{th1infin} and consequently its assertions are satisfied.
\end{theorem}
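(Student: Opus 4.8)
The theorem is a verification that the strategically enhanced merging--splitting model satisfies the three hypotheses of Theorem \ref{th1infin}: the existence of a Lyapunov function $L$ with $L(j)\to\infty$ that the generator does not increase, the uniform Lipschitz continuity of the limiting vector field $f$ of \eqref{eqSmoleqcoagfrag} on $B_+(R,L)$, and the finiteness of $\ka(\La,R)$. The plan is to take the choice $L(j)=j$ and check each in turn. For the Lyapunov condition, observe that with $L(j)=j$ the total mass $\sum_j jx_j$ is exactly conserved by every transition of \eqref{eqSmolgencoagfrag}: a merger $i,j\mapsto i+j$ has $L(i+j)=L(i)+L(j)$ and a breakage $i\mapsto j,\,i-j$ has $L(j)+L(i-j)=L(i)$. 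Hence $L$ is non-increasing, the chains remain in $B_+(R,L)$, and since $j\ge 1$ one obtains the crucial bound $\|x\|_{l^1}\le\sum_j jx_j\le R$ on this ball. The finiteness of $\ka(\La,R)$ is then immediate from \eqref{eqSmoleqcoagfragboundco}: the total merging intensity is bounded in $l^1$ by $C\|x\|_{l^1}^2\le CR^2$ and the total breakage intensity by $F\|x\|_{l^1}\le FR$.

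The substantive step is the Lipschitz bound for $f$, the second line of \eqref{eqSmoleqcoagfragboundco1}. I would write $f_k(x)-f_k(y)$ for each of the four terms of \eqref{eqSmoleqcoagfrag} and split it into a \emph{mass-action} part, in which the rates are held fixed and only the factors $x_jx_{k-j}$, $x_jx_k$, $x_k$, $x_j$ vary, and a \emph{rate-variation} part, in which the factors are held fixed and the differences $C_{ij}(x,b)-C_{ij}(y,b)$, $F_{ij}(x,b)-F_{ij}(y,b)$ are estimated by \eqref{eqSmoleqcoagfragboundco2}. In the mass-action part the coagulation terms are bilinear, so the product rule produces a factor $2$ and one surviving linear factor bounded by $R$, giving the combined gain-and-loss contribution $6CR$; the fragmentation terms are linear, so their $l^1$-operator norm is simply $3F$. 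In the rate-variation part the frozen bilinear coagulation factors are bounded by $\|x\|_{l^1}^2\le R^2$ and the frozen linear fragmentation factor by $\|x\|_{l^1}\le R$, so the Lipschitz constants from \eqref{eqSmoleqcoagfragboundco2} enter as $3C(1)R^2$ and $3F(1)R$, that is $3(C(1)R+F(1))R$. Summing the two parts yields the asserted bound and hence the uniform Lipschitz continuity of $f$.

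With the Lyapunov, finiteness, and Lipschitz hypotheses all verified, Theorem \ref{th1infin} applies directly, giving convergence in distribution together with the rates \eqref{eq1ath1infin}--\eqref{eq1bth1infin}. I expect the Lipschitz estimate to be the only real obstacle: because the rates $C_{ij}$ and $F_{ij}$ themselves depend on the state $x$, one cannot simply invoke the classical constant-coefficient Smoluchovski theory, and it is precisely the bilinear gain term that forces the product-rule splitting above and the careful tracking of how many powers of $R$ each sub-bound carries. This is where both the uniform boundedness \eqref{eqSmoleqcoagfragboundco} and the uniform Lipschitz continuity \eqref{eqSmoleqcoagfragboundco2} of the coefficients are genuinely used.
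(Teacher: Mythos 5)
Your verification is correct and follows exactly the paper's route: the paper likewise takes $L(j)=j$, observes that merging and splitting conserve $\sum_j j x_j$ so the chain stays in $B_+(L,R)$ with $\|x\|_{l^1}\le R$, and then asserts the bounds \eqref{eqSmoleqcoagfragboundco1} (whose constants $3CR^2+3FR$ and $6CR+3F+3(C(1)R+F(1))R$ you reproduce exactly) as immediate consequences of \eqref{eqSmoleqcoagfragboundco} and \eqref{eqSmoleqcoagfragboundco2}. Your only addition is that you actually spell out the mass-action versus rate-variation decomposition behind the Lipschitz estimate, which the paper leaves as a one-line assertion.
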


\subsection{Strategically enhanced preferential attachment on evolutionary background}
\label{secbirthdeath}

A natural and useful extension of the theory presented above can be obtained by the
inclusion in our pressure-resistance evolutionary-type game the well known model
of linear growth with preferential attachment (Yule, Simon and others,
see  \cite{SimRoy} for review) turning the latter into a strategically enhanced preferential attachment model
that includes evolutionary-type interactions between agents and a major player having tools
to control (interfere into) this interaction.
Since the proper exposition of the corresponding rigorous convergence result requires
an extension of Theorem  \ref{thMarkDisccontEvolu} to $L$-subcritical (rather than $L$-non-increasing)
processes, we shall not present it here, but only indicate the expected outcomes
leaving  details to another publication.

We shall work with the general framework of Theorem  \ref{thMarkDisccontEvolu}, having in mind that the basic
examples of the approximating Markov chains $X_h(t,x(h))$ can arise from the merging and splitting coalition model
of the previous section (with generator \eqref{eqSmolgencoagfrag})
or from setting \eqref{eqdefgenmeanfpool2mod}, where now the number of possible states
$j$ becomes infinite and hence, assuming for simplicity that the agents are identical so that the parameter $j$
denotes the size of the coalition, generator \eqref{eqdefgenmeanfpool2mod} becomes
\begin{equation}
\label{eqdefgenmeanfpool2modrep}
L_{b,h}G (x)=\frac{1}{h}\sum_{i,j: R_j(x,b)>R_i(x,b)} \ka x_i x_j
[R_j(x,b)-R_i(x,b)][G\left(x-h e_i+h e_j\right)-G(x)],
\end{equation}
where $R_j(x,b)$ is the payoff to a member of a coalition of size $j=1,2, \cdots$. The Markov chain with generator
\eqref{eqdefgenmeanfpool2modrep} describes the process where agents  can move from one coalition to another
choosing the size of the coalition that is more beneficial under the control $b$ of the principal.
Of course one can work also with various combinations of generators \eqref{eqdefgenmeanfpool2modrep} and
$\La_{b,h}$ from \eqref{eqSmolgencoagfrag}, as well as with their various extensions including, say, $k$th order interactions,
see \eqref{eqdefgenmeanfpoolkthor1}, or various classes (for instance, levels of activity) of agents,
where coalitions get another interpretation as groups of agents following certain particular strategy.

The most studied form of preferential attachment evolves by the discrete time injections
of agents (see \cite{BaAl99}, \cite{DeMo13}, \cite{SimRoy} and references therein).
Along these lines, we can assume that with time intervals $\tau$ a new agent enters the system
in such a way that with some probability $\al(x,b)$ (which, unlike the standard model, can now depend
on the distribution $x$ and the control parameter $b$ of the principal) she does not enter
any of the existing coalitions (thus forming a new coalition of size $1$), and with probability $1-\al(x,b)$
she joins one of the coalitions, the probability to join a coalition being proportional to its size
(this reflects the notion of preferential attachment coined in \cite{BaAl99}).
Thus if $V(x)$ is some function on the state space $h\Z^{fin}_+$, its expected
value after a single entry changing $x$ to $\hat x$ is descried by the following operator $T_h$:
\begin{equation}
\label{eqpreferattachdiscr}
T_hV(x)=\E V(\hat x)= \al V(x+he_1)+(1-\al) \sum_{k=1}^{\infty}\frac{kn_k}{L(n)}V(x-he_k+he_{k+1}),
\end{equation}
where $L(n)=\sum kn_k$, $x=nh$.

A continuous time version of these evolutions can be modeled by a Markov process, where the injection occurs with
some intensity $\la (x,b)$ (that can be influenced by the principal subject to certain costs).
In other words, it can be included by adding to generator
\eqref{eqdefgenmeanfpool2modrep} or \eqref{eqSmolgencoagfrag}
the additional term of the type
\[
\La_{b,h}^{att}G(x)=\frac{\al \la (b,x)}{h}[G(x+he_1)-G(x)]
+ \frac{(1-\al)\la (b,x)}{h}\sum_{k=1}^{\infty} k x_k[G(x-h e_k+h e_{k+1})-G(x)].
\]
The limiting evolution will then be given by the equation
\begin{equation}
\label{eqprefattgen}
\dot x=f(x)+\al \la (b,x)\frac{\pa G}{\pa x_1}
+ (1-\al)\la (b,x)\sum_{k=1}^{\infty} k x_k\left[\frac{\pa G}{\pa x_{k+1}}-\frac{\pa G}{\pa x_k}\right],
\end{equation}
where $f(x)$ is obtained from the limit of \eqref{eqdefgenmeanfpool2modrep} or \eqref{eqSmolgencoagfrag}.
A strategically enhanced preferential attachment model on the evolutionary background will thus be described,
in the dynamic law of large number limit, by the controlled  infinite-dimensional ODEs \eqref{eqprefattgen}
(via discrete or continuous-time choice of parameter $b$ by the principal).

As we mentioned, a rigorous proof of the convergence is beyond the scope of this paper.
Apart from sorting out this problem, an important issue is to understand the
controllability of the limiting (now in the sense $t\to \infty$) stationary solutions,
which may lead to the possibility to develop tools for influencing the power tails of distributions (Zipf's law)
appearing in many situations of practical interest, as well as the proliferation or extinction
of certain desirable (or undesirable) characteristics of the processes of evolution.


\section{Appendix}

\subsection{Notations for functional spaces and measures}
\label{secnotspace}

Notations introduced here are used in the main text systematically without further reminder.

For a metric space $Z$ with a metric $\rho$, let $C(Z)$ denote the space of bounded continuous functions
equipped with the sup-norm: $\|f\|=\sup_x |f(x)|$, $C_{bLip}(Z)$ the subspace of bounded Lipschitz functions
with the norm
 \begin{equation}
\label{eqdefLipnorm}
 \|f\|_{bLip}=\|f\|+\|f\|_{Lip}, \quad  \|f\|_{Lip}=\sup_{x\neq y} \frac{|f(x)-f(y)|}{\rho(x,y)}.
\end{equation}
We may write shortly $C^k$ or $C_{bLip}$ if it is clear which $Z$ we are working with.

Since we often interpret our vectors as measures, for Euclidean space $Z$, it is convenient to use the $l_1$-norm
$|x|_1=\sum_j |x_j|$ for vectors $x\in Z$, so that for functions on $\R^n$ we define
 \begin{equation}
\label{eqdefLipnorml1}
\|f\|_{Lip} = \sup_{x\neq y} \frac{|f(x)-f(y)|}{|x-y|_1}
=\sup_j \sup \frac{|f(x)-f(y)|}{|x_j-y_j|},
\end{equation}
where the last $\sup$ is the supremum over the pairs $x,y$ that differ only in its $j$th coordinate.

For $Z$ a closed convex subset of $\R^n$, let $C^k(Z)$ denote the space of $k$ times continuously differentiable
 functions on $Z$ with uniformly bounded derivatives equipped with the norm
\[
\|f\|_{C^k(Z)}=\|f\|+\sum_{j=1}^k \| f^{(j)}\|,
\]
where $\|f^{(j)}\|$ is the supremum of the magnitudes of all partial derivatives of $f$ of order $j$.
In particular, for a differentiable function, $\|f\|_{C^1}=\|f\|_{bLip}$.

For $Z$ a closed convex subset of a Banach space $B$,
the directional derivative of a real function $F$ on $Z$ at $x$ in the direction $\xi\in Z-x$ is defined as
\begin{equation}
\label{eqdefdirder}
D_{\xi}F(x)=DF(x)[\xi]=\lim_{h\to 0_+}\frac{F(x+h\xi)-F(x)}{h},
\end{equation}
and higher order derivatives are defined recursively, for instance the second derivative is
\[
D^2F(x)[\xi,\eta]=D\left( DF(x)[\xi]\right) [\eta], \quad  \xi, \eta \in Z-x.
\]

The spaces $C^k(Z)$, $k\in \N$ of continuously differentiable functions
 are the subsets of functions from $C(Z)$ with the derivatives of order up to $k$ well defined and
continuous with respect to all their variables and having finite norms
\[
\| F\|_{C^k(Z)}=\|F\|+\sum_{l=1}^k \sup_{x\in Z}\sup_{\xi_j: \|\xi_j\|=1} |D^lF(x)[\xi_1, \cdots , \xi_l]|,
\]

Similarly the differentiability of the Banach-space-valued functionals $F:Z \to B_1$ and the corresponding spaces
$C(Z;B_1)$, $C^k(Z;B_1)$ are defined for any other Banach space $B_1$.

For instance, if $B=l^1$, then
\begin{equation}
\label{eq1defnorsmoothl1}
\| F\|_{C^1(Z)}=\|F\|+\sup_{x\in Z} \sup_k \left|\frac{\pa F}{\pa x_k}\right|,
\end{equation}
and
\begin{equation}
\label{eq2defnorsmoothl1}
\| F\|_{C^2(Z)}=\| F\|_{C^1(Z)} +\sup_{x\in Z} \sup_{k,l} \left|\frac{\pa ^2F}{\pa x_k \pa x_l}\right|.
\end{equation}

For a locally compact metric space $Z$ we denote by $\MC(Z)$ (resp. $\MC^+(Z)$)
the Banach space of signed finite Borel measures on $Z$ (resp. its subset of non-negative measures),
by $\MC_{\la}(Z)$ the ball of radius $\la$ there, with $\MC^+_{\la}(Z)=\MC_{\la}(Z)\cap \MC^+(Z)$.
According to the Riesz-Markov Theorem, the Banach space $\MC(Z)$ is the Banach dual to
the space $C_{\infty}(Z)$, which is the subspace of functions from $C(Z)$ vanishing at infinity.

For a function $f$ on $Z$ and a measure $\mu$ (not necessarily bounded) we
use the scalar-product notations $(f,\mu)=\int f(z) \mu(dz)$
for the natural pairing, whenever it is well defined.

By the celebrated Kantorovich theorem, the weak topology on $\MC^+(Z)$ can be metricized via the duality
relation with the space $C_{bLip}(Z)$, that is, via the metric
\[
d_{bLip*}(\mu,\nu)=\|\mu-\nu\|_{bLip*}=\sup_{f: \|f\|_{bLip}\le 1} \int_Z f(z) (\mu- \nu) (dz).
\]

For a closed convex subset $S$ of $\MC^+(Z)$ we shall denote by $C_{weak}(S)$ the closed subset
of $C(S)$ consisting of weakly continuous functions. We shall denote by $C_{weak}^{bLip}(S)$
the space of weakly Lipschitz functions $F$ on $S$ (which are Lipschitz with respect to $d_{bLip*}$).
We shall denote $\|F\|_{weakLip}$ the corresponding Lipschitz constant and
 $\|F\|_{weakbLip}= \|F\|+\|F\|_{weakLip}$ the norm in $C_{weak}^{bLip}(S)$.

\begin{remark}
Linguistically counterintuitive, the weak continuity is a stronger requirement than just continuity.
For any bounded  measurable $\phi$, the linear functional $F(\mu)=(\phi, \mu)=\int \phi(z) \mu(dz)$
on $\MC(Z)$ is continuous and continuously differentiable of all orders in the norm topology
 with $DF(x)[\xi]=(\phi, \xi)$, $D^2F(x)=0$. On the other hand, this $F(\mu)$ is weakly continuous
 only  if $\phi$ is continuous and weakly-$*$
continuous if additionally $\phi(z)\to 0$ for $z \to \infty$. It is weakly Lipschitz, if $\phi \in C_{bLip}(Z)$.
Only for discrete countable $Z$, the linear functionals on the space $\MC(Z)=l^1$ are continuous in the norm
if and only if they are weakly continuous. This often allows one to avoid using weak topology for $l^1$.
\end{remark}

We recall for reference the following simple
and standard general formula for the comparison of arbitrary
operator semigroups $U_N$ and $U$ with generators $L_N$ and $L$ respectively:
\begin{equation}
\label{eqcompsem1}
U_N^{T-t} g-U^{T-t}=U_N^{s-t}U^{T-s}|_{s=t}^T=\int_t^T
 U_N^{s-t} (L_N-L)U^{T-s} \, ds.
 \end{equation}
When $U_N$ as a contraction in a space of bounded functions, it implies
\begin{equation}
\label{eqcompsem2}
\|U_N^{T-t} g-U^{T-t}g\|\le (T-t) \sup_{s\in [t,T]}\|(L_N-L)U^{T-s} g\|.
\end{equation}

\subsection{Sensitivity of ODEs in Banach spaces}
\label{secdifeqinl1}

Here we put together, in a concise way, certain basic facts on the sensitivity of
ODEs in Banach space with an unbounded (in particular quadratic)  r.h.s.,
the main example of interest for us being the Banach space $l^1$ and the evolutions satisfying \eqref{eqLyapcondLupper}.

Let $B$ be a Banach space equipped with the norm $\|.\|_B$ and $B_+$ its certain convex cone.
We shall write shortly $\|.\|$ for $\|.\|_B$ when no confusion arises.
Let $B(R)$ denote the ball of radius $R$ in $B$ centered at the origin and $B_+(R)=B_+\cap B(R)$.
For a linear operator $A: B\to B$ we denote by $\|A\|_{B\to B}$ its operator norm.

Let us consider an ordinary differential equation (ODE) $\dot x =f(x)$ in $B$
with a locally Lipschitz, but generally unbounded $f$
such that for any $x\in B_+(R)$ the global solution $X(t,x)$ is uniquely defined with
\begin{equation}
\label{eqLyapcondLuppergen}
X(t,x)\in B_+(e^{at}(\|x_0\|+bt))
\end{equation}
for some constants $a,b$. Lemma \ref{lemonwellposedinl1} below motivates the use of condition \eqref{eqLyapcondLuppergen}.

Under \eqref{eqLyapcondLuppergen}, the linear operators $U^t$:
\begin{equation}
\label{eqdefflowonkineq}
U^tF(x)=F(X(t,x)), \quad t\ge 0,
\end{equation}
are well defined contractions in $C(B_+)$ forming a semigroup. In case $a=b=0$,
the operators $U^t$ form a semigroup of contractions also in $C(B_+(R))$ for any $R$.

\begin{lemma}
\label{lemonsenseBanach1}
Under \eqref{eqLyapcondLuppergen} assume additionally
that $f$ is twice continuously differentiable as a mapping on $B_+$ such that
for any $R$ and all $x\in  B_+(R)$,
\begin{equation}
\label{eq1lemonsenseBanach1}
\|f\|_{C^1(B_+(R);B)}\le D_1(R),
\quad \|f\|_{C^2(B_+(R);B)} \le D_2(R),
\end{equation}
with some continuous functions $D_1(R), D_2(R)$.
Then the solutions to $\dot x =f(x)$ are twice continuously differentiable with respect to initial data and
\[
\|X(t,.)\|_{C^1(B_+(R);B)}
\le \exp \left\{t D_1(e^{at}(R+bt))\right\},
\]
\begin{equation}
\label{eq2lemonsenseBanach1}
 \|X(t,.)\|_{C^2(B_+(R);B)}
\le t D_2(e^{at}(R+bt))\exp \left\{3t D_1(e^{at}(R+bt))\right\}.
\end{equation}
Moreover,
\begin{equation}
\label{eq3lemonsenseBanach1}
\|U^tF\|_{C^1(B_+(R))} \le  \exp \left\{t D_1(e^{at}(R+bt))\right\}
 \|F \|_{C^1(B_+(e^{at}(R+bt)))},
 \end{equation}
  \begin{equation}
\label{eq4lemonsenseBanach1}
\|U^tF\|_{C^2(B_+(R))} \le
(1+ tD_2(e^{at}(R+bt)))\exp \left\{3t D_1(e^{at}(R+bt))\right\})\|F \|_{C^2(B_+(e^{at}(R+bt)))}.
\end{equation}
\end{lemma}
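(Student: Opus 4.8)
The plan is to obtain both the claimed differentiability and the quantitative bounds from the variational equations for the derivatives of the flow, closed off by Gronwall's inequality. Fix $x\in B_+(R)$ and write $R_t=e^{at}(R+bt)$ for the radius furnished by \eqref{eqLyapcondLuppergen}, so that the whole trajectory $X(s,x)$, $s\le t$, remains in $B_+(R_t)$, where by \eqref{eq1lemonsenseBanach1} the linearization satisfies $\|f'(X(s,x))\|_{B\to B}\le D_1(R_t)$ and the Hessian $\|f''(X(s,x))\|\le D_2(R_t)$. First I would treat the first-order sensitivity: the candidate first variation $\eta(s)=D_\xi X(s,x)$ should solve the linearized equation $\dot\eta=f'(X(s,x))\,\eta$, $\eta(0)=\xi$, whose time-dependent coefficient is a bounded operator of norm at most $D_1(R_t)$. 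Gronwall then gives $\|\eta(s)\|\le\|\xi\|\exp(sD_1(R_t))$, which is precisely the first line of \eqref{eq2lemonsenseBanach1}.

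To justify that this $\eta$ is genuinely the derivative I would work with the difference quotient $\eta^h(s)=(X(s,x+h\xi)-X(s,x))/h$. It satisfies a linear equation of the same shape, with coefficient $A^h(s)=\int_0^1 f'\big(X(s,x)+\theta(X(s,x+h\xi)-X(s,x))\big)\,d\theta$; working on a slightly enlarged ball one has $\|A^h(s)\|$ bounded uniformly in small $h$, and by continuity of the flow and of $f'$ one has $A^h(s)\to f'(X(s,x))$. The uniform Gronwall bound together with this convergence lets me pass to the limit and identify $\lim_{h\to0}\eta^h=\eta$ uniformly on $[0,t]$, which establishes continuous (hence Fréchet) differentiability of $X(t,\cdot)$.

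For the second order I would differentiate once more: the second variation $\zeta(s)=D^2_{\xi_1,\xi_2}X(s,x)$ solves the inhomogeneous linear equation $\dot\zeta=f'(X(s,x))\,\zeta+f''(X(s,x))[\eta_1,\eta_2]$, $\zeta(0)=0$, where $\eta_i$ is the first variation in direction $\xi_i$. By Duhamel together with $\|\eta_i(s)\|\le\exp(sD_1(R_t))$ and $\|f''\|\le D_2(R_t)$ one gets, for unit directions, $\|\zeta(t)\|\le\int_0^t\exp((t-s)D_1(R_t))\,D_2(R_t)\,\exp(2sD_1(R_t))\,ds\le tD_2(R_t)\exp(2tD_1(R_t))$, which is dominated by the second line of \eqref{eq2lemonsenseBanach1} (the exponent $3$ leaving ample slack). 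The identification of $\zeta$ as the true second derivative is carried out exactly as in the first-order step, via the corresponding second difference quotient.

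Finally the bounds \eqref{eq3lemonsenseBanach1} and \eqref{eq4lemonsenseBanach1} for $U^tF$ follow by the chain and product rules applied to $U^tF(x)=F(X(t,x))$, namely $D_\xi(U^tF)(x)=DF(X(t,x))[\eta]$ and $D^2_{\xi_1,\xi_2}(U^tF)(x)=D^2F(X(t,x))[\eta_1,\eta_2]+DF(X(t,x))[\zeta]$. Taking suprema over unit directions and over $x\in B_+(R)$, so that $X(t,x)$ ranges within $B_+(R_t)$, and inserting the bounds on $\|\eta_i\|$ and $\|\zeta\|$, yields $\|U^tF\|_{C^1(B_+(R))}\le\exp(tD_1(R_t))\,\|F\|_{C^1(B_+(R_t))}$ and $\|U^tF\|_{C^2(B_+(R))}\le(1+tD_2(R_t))\exp(3tD_1(R_t))\,\|F\|_{C^2(B_+(R_t))}$. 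I expect the main obstacle to be not the Gronwall estimates, which are routine once the variational equations are written down, but the rigorous justification of differentiability of the flow in the infinite-dimensional setting with unbounded $f$, i.e. the uniform control of the difference quotients and the passage to the limit on the enlarged balls supplied by \eqref{eqLyapcondLuppergen}.
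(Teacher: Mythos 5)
Your proposal is correct and follows essentially the same route as the paper: the paper likewise writes down the first and second variational equations for $DX(t,x)[\xi]$ and $D^2X(t,x)[\xi,\eta]$, integrates them with the initial data $\xi$ and $0$ to get \eqref{eq2lemonsenseBanach1} via Gronwall/Duhamel, and then applies the chain rule to $U^tF(x)=F(X(t,x))$ to obtain \eqref{eq3lemonsenseBanach1} and \eqref{eq4lemonsenseBanach1}. The only difference is that you additionally sketch the difference-quotient justification of differentiability of the flow, which the paper leaves implicit.
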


\begin{proof}
Differentiating the equation $\dot x=f(x)$ with respect to initial conditions yields
\begin{equation}
\label{eq5alemonsenseBanach1}
\frac{d}{dt}DX(t,x)[\xi]=Df(X(t,x))[DX(t,x)[\xi]]=Df(X(t,x)) \circ  DX(t,x)[\xi]
\end{equation}
\begin{equation}
\label{eq5blemonsenseBanach1}
\frac{d}{dt}D^2X(t,x)[\xi, \eta]=D^2f(X(t,x))[DX(t,x)[\xi], DX(t,x)[\eta]]+Df(X(t,x)) [D^2X(t,x)[\xi,\eta]].
\end{equation}
Since the initial conditions to these equations are $DX(0,x)[\xi]=\xi$,  $D^2X(0,x)[\xi,\eta]=0$,
one deduces \eqref{eq2lemonsenseBanach1} from \eqref{eq1lemonsenseBanach1}.

Differentiating \eqref{eqdefflowonkineq} yields
\begin{equation}
\label{eq5lemonsenseBanach1}
 D(U^tF)(x)[\xi]=DF(X(t,x))[DX(t,x)[\xi]],
 \end{equation}
  \begin{equation}
\label{eq6lemonsenseBanach1}
 D^2(U^tF)(x)[\xi,\eta]=D^F(X(t,x))[DX(t,x)[\xi], DX(t,x)[\eta]]
 +DF(X(t,x))[D^2X(t,x)[\xi,\eta]]
\end{equation}
implying \eqref{eq3lemonsenseBanach1} and \eqref{eq4lemonsenseBanach1}.
\end{proof}

\subsection{Variational derivatives}
\label{secvarder}

We recall here some facts about variational derivatives
of the functionals on measures. As a consequence, we deduce the asymptotic formula
for the generator of our basic model.

For a function $F$ on a convex closed subset $S$ of $\MC(Z)$ with a locally compact metric space $Z$
the {\it variational derivative} $\frac{\de F(Y)}{\de Y(x)}$ is defined as the
directional derivative of $F(Y)$ in the direction $\delta_x$:
\begin{equation}
\label{ewdefvarder}
\frac{\de F(Y)}{\de Y(x)}=D_{\de_x} F(Y)
=\lim_{s \rightarrow 0_+} \frac{1}{s} (F(Y + s\delta_x) - F(Y)).
\end{equation}
The higher derivatives
$\delta^l F(Y)/\de Y(x_1)...\de Y(x_l)$ are defined inductively.

As it follows from the definition, if $\de F(Y)/\de Y(.)$ exists for
$x\in Z$ and depends continuously on $Y$ (in weak or norm topology),
then the function $F(Y+s\delta_x)$ of $s\in {\R}_+$ has a
continuous right derivative everywhere and hence is continuously
differentiable implying
 \begin{equation}
\label{eqvarder1}
 F(Y+\delta_x)-F(Y) =\int_0^1 \frac{\de F(Y+s \delta_x)}{\de Y(x)}\, ds.
 \end{equation}

We shall say that $F$ belongs to $C^k_{weak}(S)$,
$k=1,2, \ldots,$ if $\delta^lF(Y)/\de Y(x_1)
\ldots \de Y(x_l)$ exists for all $l=1,...,k$, all $x_1, \ldots, x_k \in Z^k$ and $Y \in S$,
and represents a continuous mapping of $k+1$ variables (when
measures equipped with the weak topology) uniformly bounded on the
sets of bounded $Y$. When defined on a bounded set $S$, these spaces become Banach
when equipped with the norm
\[
\|F\|_{C^k_{weak}(S)}=\sup_{x_1, \cdots , x_k} \,\,
\sup_{Y\in S} \left| \frac{\de^k F (Y)}{\de Y(x_1) \cdots \de Y(x_k)}\right|.
\]

\begin{remark}
Again counterintuitive, the weak differentiability does not imply the weak Lipschitz continuity.
For $\phi\in C(\R^n)$, the linear functional $F(\mu)=(\phi, \mu)=\int \phi(z) \mu(dz)$
on $\MC(Z)$ is weakly continuously differentiable of all orders, but it is weakly Lipschitz
 only if $\phi$ is Lipschitz, with $\|F\|_{weakLip}=\|\phi\|_{Lip}$.
\end{remark}

The following facts are basic formulas of the calculus for functionals on measures.
They are easy to deduce (the details are given in \cite{Ko10}).
\begin{lemma}
\label{lemvarder}
(i) One has the inclusion $C^1_{weak}(S)\subset C^1(S)$ and
 \begin{equation}
\label{eqvarder12}
 D_{\xi}F(Y)=\int \frac{\de F (Y)}{\de Y(x)} \xi (dx)
\end{equation}
\begin{equation}
\label{eqvarder2}
 F(Y + \xi) - F(Y) = \int_0^1 \left(\frac{\de F(Y + s \xi)}{\de Y(.)}, \xi\right)\, ds
\end{equation}
for $F\in C^1_{weak}(S)$ and $Y\in S,\xi \in S-Y$.

(ii) One has the inclusion $C^2_{weak}(S)\subset C^2(S)$ and
\begin{equation}
\label{eqvarder3}
 F(Y + \xi) - F(Y) = (\frac{\de F(Y)}{\de Y(.)},\xi)
  + \int_0^1 (1 - s) \left(\frac{\de^2 F(Y + s \xi)}{\de Y(.)\de Y(.)}, \xi
\otimes \xi\right)\, ds,
 \end{equation}
  for $F \in C^2_{weak}(S)$ and $Y\in S,\xi \in S-Y$.

(iii) If $t \mapsto \mu_t \in S$ is continuously
differentiable in the weak topology, then for any $F \in C^1_{weak}(S)$
\begin{equation}
\label{eqvarder4}
 \frac{d}{dt} F(\mu_t) = (\delta F(\mu_t; \cdot),\dot{\mu}_t).
 \end{equation}
\end{lemma}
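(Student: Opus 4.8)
The plan is to reduce all three parts to elementary one-variable calculus applied to the scalar function $g(s)=F(Y+s\xi)$, $s\in[0,1]$, whose only inputs are the weak continuity and uniform boundedness of the variational derivatives built into the definition of $C^k_{weak}(S)$, together with the one-Dirac identity \eqref{eqvarder1} already recorded; note that $g$ is well defined since by convexity of $S$ the segment $Y+s\xi$ stays in $S$ for $\xi\in S-Y$. For part (i) the core is \eqref{eqvarder12}, which I would first establish for finitely supported directions $\xi=\sum_{i=1}^n c_i\de_{x_i}$. There the function $G(s_1,\dots,s_n)=F(Y+\sum_i s_i c_i\de_{x_i})$ has partial derivatives $\pa_{s_i}G=c_i\frac{\de F}{\de Y(x_i)}(Y+\sum_j s_j c_j\de_{x_j})$, which are jointly continuous by the $C^1_{weak}$ hypothesis, so $G$ is genuinely $C^1$ and differentiating along the diagonal yields $D_\xi F(Y)=\sum_i c_i\frac{\de F}{\de Y(x_i)}(Y)=\int\frac{\de F(Y)}{\de Y(x)}\xi(dx)$. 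I would then approximate a general direction weakly by discrete measures $\xi_m\to\xi$ and pass to the limit, the right-hand side converging because $\frac{\de F(Y)}{\de Y(\cdot)}$ is bounded and continuous; this shows that the one-sided directional derivative exists, is linear in $\xi$, and equals $\Psi(Y,\xi):=(\frac{\de F(Y)}{\de Y(\cdot)},\xi)$, so $F\in C^1(S)$. Since $s\mapsto Y+s\xi$ is weakly continuous, $g'(s)=D_\xi F(Y+s\xi)=(\frac{\de F(Y+s\xi)}{\de Y(\cdot)},\xi)$ is continuous, and \eqref{eqvarder2} is then the fundamental theorem of calculus for $g$ on $[0,1]$.

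For part (ii) I would apply part (i) to the first variational derivative: the map $s\mapsto(\frac{\de F(Y+s\xi)}{\de Y(\cdot)},\xi)$ is differentiable with derivative $(\frac{\de^2 F(Y+s\xi)}{\de Y(\cdot)\de Y(\cdot)},\xi\otimes\xi)$, so $g\in C^2([0,1])$ with $g''(s)=(\frac{\de^2 F(Y+s\xi)}{\de Y\de Y},\xi\otimes\xi)$, and the expansion \eqref{eqvarder3} is the one-variable Taylor formula with integral remainder $g(1)=g(0)+g'(0)+\int_0^1(1-s)g''(s)\,ds$; continuity of the second derivative and its uniform bound on bounded $S$ give $F\in C^2(S)$. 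For part (iii) I would form the difference quotient $[F(\mu_{t+h})-F(\mu_t)]/h$ and, setting $\xi_h=\mu_{t+h}-\mu_t$, invoke \eqref{eqvarder2} to rewrite it as $h^{-1}\int_0^1(\frac{\de F(\mu_t+s\xi_h)}{\de Y(\cdot)},\xi_h)\,ds$. Since $t\mapsto\mu_t$ is weakly $C^1$, one has $\xi_h/h\to\dot\mu_t$ weakly and $\mu_t+s\xi_h\to\mu_t$ uniformly in $s$, and the weak continuity of the variational derivative then delivers the limit $(\frac{\de F(\mu_t)}{\de Y(\cdot)},\dot\mu_t)$, which is \eqref{eqvarder4}.

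The \emph{main obstacle} is the limit interchange in part (i): passing from discrete directions to a general measure direction requires more than the pointwise convergence of the variational derivative. One must control the difference quotients $h^{-1}(F(Y+h\xi_m)-F(Y))$ uniformly in the mesh $m$, extracting this uniformity from the uniform boundedness of $\frac{\de F}{\de Y(\cdot)}$ on bounded subsets of $S$ together with its weak continuity, so that the double limit in $h$ and $m$ can legitimately be exchanged. Once this is secured, every remaining step is the routine scalar calculus sketched above; the full technical details are carried out in \cite{Ko10}.
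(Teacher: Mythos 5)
You should first note that the paper itself does not prove this lemma: it states only that the formulas ``are easy to deduce'' and defers all details to \cite{Ko10}. Your sketch follows what is essentially the standard (and the reference's) route --- establish \eqref{eqvarder12} for discrete directions, pass to general directions by weak approximation, then obtain \eqref{eqvarder2}, \eqref{eqvarder3} and \eqref{eqvarder4} by one-variable calculus applied to $g(s)=F(Y+s\xi)$ --- and your diagnosis of the limit interchange as the crux of part (i) is accurate.

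There is, however, one step that fails as written: the auxiliary function $G(s_1,\dots,s_n)=F(Y+\sum_i s_ic_i\delta_{x_i})$ on the box $[0,1]^n$. Convexity of $S$ only guarantees that the diagonal segment $Y+s\xi$, $s\in[0,1]$, lies in $S$; off the diagonal the argument of $F$ generally leaves $S$, where neither $F$ nor its variational derivatives are defined. This is not a marginal case: $\xi\in S-Y$ is typically a genuinely signed measure, so the $c_i$ have mixed signs, and for the sets $S$ actually used in the paper ($\PC(Z)$, $\MC^+_{\la}(Z)$, $B_+(L,R)$) moving in a single coordinate $s_i$ immediately violates positivity or the mass/norm constraint. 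The repair is to stay on the segment: compute the one-sided derivative of $g$ at $s$ by decomposing the increment $h\xi=h\xi^+-h\xi^-$, observing that $\xi^-\le Y$ (it is the negative part of $Y'-Y$ with $Y,Y'\in\MC^+(Z)$) so that $Y_s-\theta h\xi^-\ge(1-s-\theta h)Y$ remains non-negative for $\theta h\le 1-s$, and then telescoping each of the two monotone legs through finitely many Dirac increments via \eqref{eqvarder1} (and its one-sided analogue for mass removal) before passing to the weak limit. With that substitution, and with the uniform boundedness and joint weak continuity of $\delta F/\delta Y$ used exactly as you indicate to justify the exchange of limits (and, in part (iii), to get uniform-on-compacts convergence of the test functions paired against $\xi_h/h\to\dot\mu_t$), the rest of your argument --- the fundamental theorem of calculus for \eqref{eqvarder2}, Taylor with integral remainder for \eqref{eqvarder3}, and the difference-quotient computation for \eqref{eqvarder4} --- goes through.
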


Though the variational derivatives are well defined for
the general space $C^1(S)$ of strongly differentiable functions, they may not be continuous
and hence are not very handy to work with. The analogs of equations \eqref{eqvarder2} and
\eqref{eqvarder3} for $F\in C^1(S)$ and $F\in C^2(S)$ respectively are the formulas
\begin{equation}
\label{eqvarder2str}
 F(Y + \xi) - F(Y) = \int_0^1 D F(Y+s\xi)[\xi] \, ds,
\end{equation}
\begin{equation}
\label{eqvarder3str}
 F(Y + \xi) - F(Y) =  D_{\xi} F(Y) + \int_0^1 (1 - s) D^2 F(Y + s \xi)[\xi,\xi]\, ds,
 \end{equation}
 valid for $\xi \in S-x$.

These rules extend to measure-valued
functions on $\MC(Z)$. Namely, a mapping $\Phi:\MC^+(Z)
\mapsto \MC(Z')$ with another set $Z'$ has a {\it weak variational derivative}
$\delta \Phi /\de Y(x)$, if for any $Y\in \MC^+(Z)$, $x\in Z$ the limit
\[
\frac{\delta \Phi}{\delta Y(x)}=\lim_{s\to 0_+} \frac{1}{s} (\Phi
(Y+s\delta_x)-\Phi (Y))
\]
exists in the weak topology of $\MC(Z')$ and is a finite signed
measure on $Z'$. Higher derivative are defined inductively. We shall
say that $\Phi$ belongs to $C^l_{weak}(\MC(Z);\MC(Z'))$, $l = 1,2, \ldots,$
if  the weak variational derivatives $\delta^k
\Phi(Y;x_1, \ldots, x_k)$ exist for all $k=1,...,l$, all $x_1, \ldots, x_k \in Z^k$ and
$Y \in \MC(Z)$, and represent continuous in the sense of the weak
topology mapping $\MC(Z)\times Z^k \mapsto \MC(Z')$, which is
bounded on the bounded subsets of $Y$.

\begin{remark}
Unlike real functions, the inclusion $C^l_{weak}(\MC(Z);\MC(Z))\subset C^l(\MC(Z);\MC(Z))$
does not hold anymore. For instance, if $Z$ is a one-point set, we have the opposite inclusion
$C^l(\R;\MC(Z))\subset C^l_{weak}(\R;\MC(Z))$.
\end{remark}

The following chain rule is straightforward (details of the proof see e. g. \cite{Ko10}).

\begin{lemma}
\label{lemvardirchainrule}
(i) Let $\Phi \in C^1_{weak}(\MC(Z);\MC(Z))$ and
$F\in C^1_{weak}(\MC(Z))$, then the composition $F\circ \Phi (Y)=F(\Phi (Y))$
 belongs to $C^1_{weak}(\MC(Z))$ and
\begin{equation}
\label{eqlemvardirchainrule}
 \frac{\delta F}{\delta Y(x)}(\Phi(Y))=\int_Z
\frac{\delta F (W)}{\delta W(y)}\mid_{W=\Phi (Y)}\frac{\delta
\Phi}{\delta Y(x)}(Y,dy).
\end{equation}

(ii) Similarly, if $\Phi \in C^1(\MC(Z);\MC(Z))$ and
$F\in C^1(\MC(Z))$, then the composition $F\circ \Phi (Y)=F(\Phi (Y))$
 belongs to $C^1(\MC(Z))$ and
 \begin{equation}
\label{eqlemvardirchainrulestr}
D_{\xi} (F\circ \Phi) (Y) =DF (\Phi (Y))[D_{\xi} \Phi (Y)],
\end{equation}
for any $\xi$. This turns to \eqref{eqlemvardirchainrule} for $\xi=\de_x$.
\end{lemma}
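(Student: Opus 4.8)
The plan is to reduce both parts to the first-order integral representations already recorded in Lemma \ref{lemvarder}, since those convert the chain rule into a routine passage to the limit.

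For part (ii) I would start from the definition of the directional derivative and insert the strong integral formula \eqref{eqvarder2str}. Writing $W=\Phi(Y)$ and $\eta_s=\Phi(Y+s\xi)-\Phi(Y)$, strong differentiability of $\Phi$ gives $\eta_s/s\to D_\xi\Phi(Y)$ and $\eta_s\to 0$ in norm. Then
\[
\frac{1}{s}\bigl[F(\Phi(Y+s\xi))-F(\Phi(Y))\bigr]
=\int_0^1 DF(W+u\eta_s)\Bigl[\tfrac{\eta_s}{s}\Bigr]\,du.
\]
Splitting the integrand as $DF(W+u\eta_s)[\eta_s/s-D_\xi\Phi(Y)]+\bigl(DF(W+u\eta_s)-DF(W)\bigr)[D_\xi\Phi(Y)]$, the first summand vanishes in the limit because $\|DF\|$ is locally bounded ($F\in C^1$) and $\eta_s/s\to D_\xi\Phi(Y)$, while the second vanishes by continuity of $DF$ uniformly in $u\in[0,1]$. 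This yields \eqref{eqlemvardirchainrulestr}.

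For part (i) I would run the same argument in the weak topology with $\xi=\de_x$. Setting $W=\Phi(Y)$ and $\xi_s=\Phi(Y+s\de_x)-\Phi(Y)$, the hypothesis $\Phi\in C^1_{weak}$ gives $\xi_s/s\to\frac{\de\Phi}{\de Y(x)}(Y,\cdot)$ weakly and $\xi_s\to 0$ weakly. Feeding this into the weak integral formula \eqref{eqvarder2} produces
\[
\frac{1}{s}\bigl[F(W+\xi_s)-F(W)\bigr]
=\int_0^1\Bigl(\frac{\de F(W+u\xi_s)}{\de W(\cdot)},\frac{\xi_s}{s}\Bigr)\,du,
\]
and the candidate limit is the right-hand side of \eqref{eqlemvardirchainrule}. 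I would split the pairing as $(g_s-g,\nu_s)+(g,\nu_s-\nu)$, writing $g_s=\frac{\de F(W+u\xi_s)}{\de W(\cdot)}$, $g=\frac{\de F(W)}{\de W(\cdot)}$, $\nu_s=\xi_s/s$ and $\nu=\frac{\de\Phi}{\de Y(x)}(Y,\cdot)$. The term $(g,\nu_s-\nu)$ vanishes since $g$ is a fixed bounded continuous function (the variational derivative is jointly continuous in $(W,y)$) and $\nu_s\to\nu$ weakly, while $(g_s-g,\nu_s)$ is controlled by the joint weak continuity and uniform boundedness of $\frac{\de F}{\de W}$ built into the definition of $C^1_{weak}$, using tightness of the finite measures $\nu_s$ to upgrade the pointwise-in-$y$ convergence $g_s\to g$ to convergence of the pairing.

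The main obstacle is exactly this last interchange of the weak limit with the pairing: unlike the strong case of (ii), weak convergence of both factors is not by itself enough, and one must lean on the uniform boundedness and joint continuity hypotheses of $C^1_{weak}$ together with tightness of finite measures. Once the formula is established pointwise, it remains to verify that its right-hand side is weakly continuous in $(Y,x)$ and uniformly bounded on bounded sets, which follows directly from the corresponding properties of $\frac{\de F}{\de W}$ and $\frac{\de\Phi}{\de Y}$, thereby confirming $F\circ\Phi\in C^1_{weak}(\MC(Z))$ and completing the proof.
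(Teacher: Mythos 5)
The paper itself offers no argument here: it simply declares the chain rule ``straightforward'' and refers the reader to \cite{Ko10} for details, so there is no in-paper proof to compare against. Your proposal supplies exactly the natural argument that the citation presupposes: reduce both parts to the first-order integral representations \eqref{eqvarder2} and \eqref{eqvarder2str} of Lemma \ref{lemvarder} applied to the increment $\Phi(Y+s\xi)-\Phi(Y)$, and pass to the limit. Part (ii) is complete as written. For part (i) you correctly isolate the only genuinely delicate step, namely the interchange of the weak limit with the pairing $(g_s-g,\nu_s)$; your appeal to the uniform boundedness and joint weak continuity built into the definition of $C^1_{weak}$, together with tightness of the measures $\xi_s/s$, is the right mechanism (one should note that tightness of a weakly convergent family of signed measures on a non-compact $Z$ requires a word about positive and negative parts, but this is at the same level of rigour as the paper's own treatment of weak derivatives). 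Your closing check that the resulting derivative is jointly weakly continuous and bounded on bounded sets, so that $F\circ\Phi\in C^1_{weak}(\MC(Z))$, is also needed and correctly flagged. In short, the proposal is sound and fills a gap the paper delegates to a reference.
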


The following technical result is the key ingredient in the proof of Theorem \ref{th2}.

\begin{lemma}
\label{lembasas}
Let a measurable function $R(b,\mu)$ on $\R\times \MC(Z)$ be given.
For a pair of different points $z_1,z_2$ of $Z$ and a measure $\mu\in \MC(Z)$,
let $z_l(\mu), z_s(\mu)$ (with $l$ standing for 'large' and $s$ for 'small')
denote the same pair, but ordered in such a way that $R(z_l,\mu)\ge R(z_s, \mu)$
(if the values are equal, the choice of ordering is irrelevant). Let
\begin{equation}
\label{eqdefcontwithmajorN0}
L_Nf (\de_{\x}/N)=\frac{\ka}{N} \sum_{(i,j)}
[R(x_j,\de_{\x}/N)-R(x_i,\de_{\x}/N)]
[f(\de_{\x}/N-\de_{x_i}/N+\de_{x_j}/N)-f(x)]
\end{equation}
where $\x=(x_1, \cdots, x_N)$ and the sum is over all pairs $(i,j)$ of distinct indices ordered in such a way that
$R(x_j,\de_{\x}/N)>R(x_i,\de_{\x}/N)$
(the order is irrelevant if the corresponding values of $R$ coincide).

Then, for  $f\in C^2_{weak}(\MC(Z))$,
\[
L_Nf(\mu)
=\ka \int_Z \int_Z \frac{\de f(\mu)}{\de \mu (z_2)}[R(z_2,\mu)-R(z_1,\mu)]\mu(dz_1)\mu(dz_2)
\]
\[
+\frac{\ka}{2N} \int_0^1 (1-s) \int_K \int_K \mu(dz_1)\mu(dz_2) \, ds [R(z_l(\mu),\mu)-R(z_s(\mu),\mu)]
\]
\begin{equation}
\label{eq1lembasas}
\times \left(\frac{\de^2 f}{\de \mu (z_2)\de \mu(z_2)}
-2\frac{\de^2 f}{\de \mu (z_2)\de \mu(z_1)}+\frac{\de^2 f}{\de \mu (z_1)\de \mu(z_1)}\right)
\left(\mu+\frac{s}{N}(\de_{z_l(\mu)}-\de_{z_s(\mu)})\right)
\end{equation}
with $\mu=\de_{\x}/N$.
\end{lemma}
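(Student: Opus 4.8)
The plan is to apply the second-order Taylor expansion for functionals on measures, equation \eqref{eqvarder3}, to each increment appearing in \eqref{eqdefcontwithmajorN0}, and then to collect the first- and second-order contributions separately, using two distinct symmetry arguments to rewrite the sums over pairs as double integrals against $\mu\otimes\mu$. Throughout I set $\mu=\de_{\x}/N$.

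First I would fix an ordered pair $(i,j)$ from the sum, so that $R(x_j,\mu)>R(x_i,\mu)$, and put $\xi=(\de_{x_j}-\de_{x_i})/N$. Since $\mu$ carries mass at least $1/N$ at $x_i$, the measure $\mu+s\xi$ stays in $\MC^+(Z)$ for all $s\in[0,1]$, so \eqref{eqvarder3} applies to $f\in C^2_{weak}$ along the whole segment and gives
\[
f(\mu+\xi)-f(\mu)=\frac{1}{N}\left[\frac{\de f(\mu)}{\de\mu(x_j)}-\frac{\de f(\mu)}{\de\mu(x_i)}\right]
+\int_0^1(1-s)\left(\frac{\de^2 f(\mu+s\xi)}{\de\mu(.)\de\mu(.)},\,\xi\otimes\xi\right)ds .
\]
Expanding $\xi\otimes\xi=N^{-2}(\de_{x_j}-\de_{x_i})\otimes(\de_{x_j}-\de_{x_i})$ produces exactly the symmetric combination of second derivatives displayed in \eqref{eq1lembasas}, evaluated at $\mu+s\xi$. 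For the leading term I multiply the first-order part by $\frac{\ka}{N}[R(x_j,\mu)-R(x_i,\mu)]$ and sum. The product $[R(x_j)-R(x_i)]\bigl[\frac{\de f}{\de\mu(x_j)}-\frac{\de f}{\de\mu(x_i)}\bigr]$ is invariant under the transposition $i\leftrightarrow j$ because both brackets change sign; hence the restricted sum over ordered pairs equals one half of the sum over all $i\ne j$, and since the diagonal terms vanish it extends freely to all $i,j\in\{1,\dots,N\}$. Writing $\mu=\de_{\x}/N$ turns $N^{-2}\sum_{i,j}$ into $\int_Z\int_Z\mu(dz_1)\mu(dz_2)$, and a second relabeling $z_1\leftrightarrow z_2$ collapses the antisymmetric pair $\frac{\de f}{\de\mu(z_2)}-\frac{\de f}{\de\mu(z_1)}$ into $2\frac{\de f}{\de\mu(z_2)}$ — precisely the passage from \eqref{eqdefgenmeanfpool2cont3} to \eqref{eqdefgenmeanfpool2cont4} — yielding the first line of \eqref{eq1lembasas}.

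For the remainder I treat the second-order part in the same way but with a different symmetry. By construction $x_j$ is the point of larger payoff, so $x_j=z_l(\mu)$ and $x_i=z_s(\mu)$; the prefactor is then exactly $[R(z_l,\mu)-R(z_s,\mu)]$ and the shift becomes $\mu+\frac{s}{N}(\de_{z_l}-\de_{z_s})$, matching the integrand of \eqref{eq1lembasas}. Now the entire summand — the nonnegative payoff difference, the symmetric second-derivative combination, and the intrinsically defined shifted measure — is invariant under $z_1\leftrightarrow z_2$, so the sum over ordered pairs with $R_j>R_i$ equals one half of the symmetric double sum over all $i,j$ (diagonal terms again vanishing). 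Combined with the rate factor $\ka/N$ and the $N^{-2}$ from $\xi\otimes\xi$, and rewriting $N^{-2}\sum_{i,j}$ as $\int_Z\int_Z\mu(dz_1)\mu(dz_2)$, this produces exactly the $\frac{\ka}{2N}$-term of \eqref{eq1lembasas}, completing the identity.

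The main obstacle I expect is the careful bookkeeping of these two different symmetrizations: the first-order term is symmetrized because both of its factors are antisymmetric in $(i,j)$, whereas the second-order term is symmetrized because its integrand is genuinely symmetric once $z_l,z_s$ are defined intrinsically by the payoff ordering. One must keep all powers of $N$ straight (one $1/N$ from the rate, two from $\xi\otimes\xi$, two absorbed into $\mu\otimes\mu$) and verify that $\mu+s\xi$ never leaves $\MC^+(Z)$, which is what makes the $C^2_{weak}$-bound on $f$ legitimately available along the entire integration segment.
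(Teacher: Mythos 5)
Your proposal is correct and follows essentially the same route as the paper: apply the second-order Taylor formula \eqref{eqvarder3} to each increment with $\xi=(\de_{x_j}-\de_{x_i})/N$, then use the antisymmetry of the first-order factors and the intrinsic $z_l/z_s$ symmetry of the remainder to pass from the restricted sum over ordered pairs to double integrals against $\mu\otimes\mu$ (the paper phrases this as regrouping over two-point subsets $I=\{i,j\}$, which is the same halving). Your explicit check that $\mu+s\xi$ stays in $\MC^+(Z)$ and your bookkeeping of the powers of $N$ are both accurate and merely make explicit what the paper leaves as ``seen directly.''
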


\begin{proof}
Applying \eqref{eqvarder3} one gets
\[
L_Nf(\mu)
=\frac{\ka}{N} \sum_{i,j: R(x_j,\mu)>R(x_i,\mu)}[R(x_j,\mu)-R(x_i,\mu)]
\]
\[
\times
\left[
\left(\frac{\de f (\mu)}{\de \mu(.)}, \frac{\de_{x_j}-\de_{x_i}}{N}\right) +\int_0^1(1-s)
\left(\frac{\de^2 f (\mu+(\de_{x_j}-\de_{x_i})/N)}{\de \mu(.)\de \mu (.)}, \frac{(\de_{x_j}-\de_{x_i})^{\otimes 2}}{N^2}\right)
 \, ds \right],
\]
or equivalently
\[
L_Nf(\mu)
=\frac{\ka}{N^2} \sum_{I=\{i,j\}}
[R(x_j,\mu)-R(x_i,\mu)]
\left(\frac{\de f (\mu)}{\de \mu(x_j)}-\frac{\de f (\mu)}{\de \mu(x_i)}\right)
 \]
 \[
 +\frac{\ka}{N^3} \sum_{I=\{i,j\}} \int_0^1(1-s) ds [R(x_l,\mu)-R(x_s,\mu)]
 \]
 \[
 \times
\left(\frac{\de^2 f}{\de \mu(x_i)\de \mu (x_i)}-2\frac{\de^2 f}{\de \mu(x_i)\de \mu (x_j)}+\frac{\de^2 f}{\de \mu(x_j)\de \mu (x_j)}\right)
 \left(\mu+\frac{s}{N}(\de_{x_l(\mu)}-\de_{x_s(\mu)})\right),
\]
where the summation is over the two-point subsets $I$ of $\{1, \cdots , N\}$.
It is seen directly that this rewrites as \eqref{eq1lembasas}.
\end{proof}

\subsection{On measure-valued ODEs with the Lyapunov condition}
\label{secdifeqinl3}

Let $Z$ be a locally compact space.
Here we recall the basic facts on the growth of
positivity preserving ordinary differential equations (ODEs) in $\MC(Z)$
with an unbounded r.h.s. satisfying the Lyapunov condition.

Let us consider again an ODE $\dot x=f(x)$ in $\MC(Z)$
with a continuous, but generally unbounded $f$.
We are interested here in evolutions preserving positivity, that is, such that for
 any initial $x \in \MC^+(Z)$ the solution $x(t)$ belongs to $\MC^+(Z)$ for all $t$.
 This implies that $f$ must be conditionally positive, in the sense that for any $x \in \MC^+(Z)$,
 the negative part of $f(x)$ is absolutely continuous with respect to $f(x)$. In case $\MC^+(Z)=l^1_+$ this means that
for any $x\in l^1_+$ with $x_k=0$ one has $f_k(x)\ge 0$.

 \begin{remark}
By Theorem 6.21 of \cite{Ko10}, conditionally positive bounded $f$ have the following structure: there exist
a family of stochastic kernels $\nu(x,y,dz)$ in $Z$, $x\in \MC(Z)$, and a non-negative function $a(x,z)$
on $\MC(Z) \times Z$ such that
\begin{equation}
\label{eqdefpositpreseqden}
f(x)(dy) =\int_Z x (dz) \nu(x,z,dy) -a(x,y) x (dy).
\end{equation}
In particular, if $Z=\N$, this means the existence of nonnegative
functions  $\nu (j,x,k)$ and $a(j,x)$ on $\N \times l^1 \times \N$ and on $\N \times l^1$ respectively such that
\begin{equation}
\label{eqdefpositpreseqden1}
f_k(x) =\sum_j x_j \nu(x,j,k) -a(x,k) x_k.
\end{equation}
\end{remark}

A continuous function $L$ on $Z$, bounded below by a positive constant,
will be referred to as a Lyapunov function or a barrier. For any such function,
let us define the subset $\MC(Z,L)$ of $\MC(Z)$ of measures $x$ such that
\begin{equation}
\label{eqdefweighnorml1}
\|x\|_L=\int L(z) |x|(dz)=(|x|,L) <\infty,
\end{equation}
which is itself a Banach space with the norm $\|.\|_L$. Let us denote by $B(L,R)$ the ball in $\MC(Z,L)$
of radius $R$ and let $\MC(Z,L)_+=\MC(Z,L)\cap \MC^+(Z)$, $B_+(L,R)=B(L,R)\cap \MC^+(Z)$.
For the case $Z=\N$ let us write $l^1(L)$ for $\MC(Z,L)$. In particular, $l^1(\1)=l^1=\MC(\Z)$, where $\1$
denotes of course the function that equals $1$ everywhere.

Let us say that the equation $\dot x=f(x)$ and the function $f(x)$ are $L$-subcritical
(respectively,  satisfy the Lyapunov condition for $L$) if
 $f:\MC(Z,L)_+ \to \MC(Z,L)$ and
\begin{equation}
\label{eq1lemonwellposedinl1}
(L, f(x))=\int L(z)f(x)(dz) \le 0
\end{equation}
( respectively
\begin{equation}
\label{eqLyapcondL}
(L, f(x))\le a (L,x)+b
\end{equation}
for all $x\in \MC(Z,L)_+$ and some constants $a,b$).

\begin{lemma}
\label{lemonwellposedinl1}

(i) Suppose the function $f$ is conditionally positive, satisfies the Lyapunov condition for
a Lyapunov function $L$ on $Z$ and
is Lipschitz either weakly or in the norm of $\MC(Z,L)$ or $\MC(Z)$
on any bounded subset of $\MC(Z,L)_+$. Then, for any $x\in \MC(Z,L)_+$,
the Cauchy problem of equation $\dot x=f(x)$ with
initial condition $x$ at time $s\ge 0$ has
a unique global (that is defined for all times) solution
$X(t,x)$ in $\MC(Z,L)_+$ with derivative understood with respect to the corresponding
 topology. Moreover,
\begin{equation}
\label{eqLyapcondLupper}
X(t,x)\in B_+(L, e^{at}(\|x_0\|_L+bt)).
\end{equation}
In particular, any ball $B_+(L,R)$ is invariant under an $L$-subcritical evolution.

(ii) If additionally to \eqref{eqLyapcondL}, one has
\begin{equation}
\label{eqLyapcondLlower}
(L, f(x))\ge -a_1 (L,x)
\end{equation}
with a constant $a_1$, then
\begin{equation}
\label{eqLyapcondLlower1}
(L,X(t,x))\ge e^{-a_1 t}(L,x).
\end{equation}

(iii) Finally, if instead of \eqref{eqLyapcondL} one has
\begin{equation}
\label{eqLyapcondLlowerup}
(L, f(x))=a (L,x)+b,
\end{equation}
then
\begin{equation}
\label{eqLyapcondLlowerup1}
(L, X(t,x)))=e^{a t}[(L,x)+\frac{b}{a} (1-e^{-at})].
\end{equation}
\end{lemma}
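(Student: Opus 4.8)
The plan is to reduce the whole statement to the behaviour of the single scalar quantity $m(t)=(L,X(t,x))$, which for positivity-preserving evolutions equals $\|X(t,x)\|_L$, and to read off (i)--(iii) from a scalar (in)equality for $m$ supplied by the Lyapunov condition. The equation is autonomous, so I take the initial time $s=0$ without loss of generality.

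First I would settle local well-posedness. Since $f$ maps $\MC(Z,L)_+$ into $\MC(Z,L)$ and is Lipschitz on bounded subsets, the standard Picard iteration in the Banach space $\MC(Z,L)$ yields, for each $x\in\MC(Z,L)_+$, a unique solution $X(t,x)$ on a maximal interval $[0,T_{\max})$, the only way global existence can fail being a finite-time blow-up of $\|X(t,x)\|_L$ (the usual continuation alternative; cf. the general theory in \cite{Ko10}). Positivity is then inherited from the conditional positivity of $f$: writing, via \eqref{eqdefpositpreseqden1}, each coordinate equation as $\dot x_k=\sum_j x_j\nu(x,j,k)-a(x,k)x_k$ with non-negative gain $\sum_j x_j\nu(x,j,k)$, the Duhamel representation $x_k(t)=x_k(0)e^{-\int_0^t a\,ds}+\int_0^t\big(\sum_j x_j\nu\big)e^{-\int_s^t a\,d\tau}\,ds$ keeps every coordinate non-negative as long as the state stayed in $\MC^+(Z)$ on $[0,t)$, so $\MC^+(Z)$ is forward invariant.

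The heart of the argument is the a priori estimate. In the case where $f$ is Lipschitz in the norm of $\MC(Z,L)$ the functional $y\mapsto(L,y)$ is bounded linear there (indeed $|(L,y)|\le\|y\|_L$), so $m$ is differentiable with $\dot m(t)=(L,\dot X(t,x))=(L,f(X(t,x)))$. The Lyapunov condition \eqref{eqLyapcondL} then gives $\dot m(t)\le a\,m(t)+b$, and Gr\"onwall's inequality produces $m(t)\le e^{at}(m(0)+bt)$ (using $a^{-1}(e^{at}-1)\le te^{at}$), which is exactly \eqref{eqLyapcondLupper}; in particular $\|X(t,x)\|_L$ stays finite on every finite interval, so $T_{\max}=\infty$, and for $a=b=0$ each ball $B_+(L,R)$ is invariant. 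Parts (ii) and (iii) are then immediate refinements of the same scalar computation: \eqref{eqLyapcondLlower} gives $\dot m\ge -a_1 m$ and hence $m(t)\ge e^{-a_1 t}m(0)$, which is \eqref{eqLyapcondLlower1}, while under the equality \eqref{eqLyapcondLlowerup} the function $m$ solves $\dot m=am+b$ exactly, whose solution $e^{at}[m(0)+\tfrac{b}{a}(1-e^{-at})]$ is \eqref{eqLyapcondLlowerup1}.

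The main obstacle I anticipate is justifying $\dot m(t)=(L,f(X(t,x)))$ when $L$ is unbounded and $f$ is only weakly Lipschitz (or Lipschitz merely in the total-variation norm of $\MC(Z)$): then $(L,\cdot)$ is not continuous for the relevant topology and one cannot differentiate the pairing directly. I would handle this by truncation, replacing $L$ by the bounded continuous barriers $L_N=L\wedge N$, for which $\tfrac{d}{dt}(L_N,X(t,x))=(L_N,f(X(t,x)))$ is legitimate, integrating, and letting $N\to\infty$; the key point is that $(L_N,X(t,x))\uparrow(L,X(t,x))$ by monotone convergence, while $(L_N,f(X(s,x)))\to(L,f(X(s,x)))$ by dominated convergence, the interchange with $\int_0^t$ being controlled uniformly on $[0,T]$ by the local $\|\cdot\|_L$-bound available on every compact subinterval of existence (which rules out escape of mass to infinity). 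Verifying that this continuation criterion is genuinely governed by $\|\cdot\|_L$ rather than by the total-variation norm, and carrying out the limit uniformly on bounded time intervals, is the only delicate point; the remainder is the elementary scalar Gr\"onwall machinery.
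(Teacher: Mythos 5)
Your proposal is correct and follows essentially the same route as the paper's proof: local well-posedness plus positivity preservation from conditional positivity, then a scalar Gr\"onwall estimate on $(L,X(t,x))$ yielding the a priori bound \eqref{eqLyapcondLupper} and hence global existence, with (ii) and (iii) read off from the same scalar computation. The paper's own argument is a three-line sketch of exactly this; you merely supply the details it leaves implicit (Picard iteration with the $\|\cdot\|_L$-continuation criterion, the Duhamel form of \eqref{eqdefpositpreseqden1} for forward invariance of $\MC^+(Z)$, and the truncation $L\wedge N$ needed to justify differentiating the pairing when the solution is only weakly differentiable).
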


\begin{proof}
(i) By local Lipschitz continuity and conditional positivity, equation $\dot x=f(x)$
is locally well-posed and preserves positivity.
Moreover, by the Lyapunov condition
\[
 (L,x(t))\le (L,x)+a\int_0^t (L, x(s)) \, ds +bt,
 \]
 so that by Gronwall's lemma (and the preservation of positivity)
\[
0\le  (L,x(t)) \le e^{at} [(L,x)+bt]
\]
implying that the solution can be extended to all times with required bounds.

(ii) This is clear, as \eqref{eqLyapcondL} implies
\[
\frac{d}{dt} (L,x(t)) \ge -a_1 (L,x(t)).
\]

(iii) Equation \eqref{eqLyapcondLlowerup} implies
\[
\frac{d}{dt} (L,x(t))=a(L,x(t))+b,
\]
leading to \eqref{eqLyapcondLlowerup1}.

\end{proof}

{\bf Acknowledgements}. I am grateful to Alain Bensoussan, Mark Kilgour, Oleg Malafeyev
and Didier Sornette for very useful comments to the initial drafts
of this manuscript.

\end{document}